%%%%%%%%%%%%%%%%%%%%%%%%%%%%%%%%%%%%%%%%%%%%%%%%%%%%%%%
\documentclass[a4paper]{amsart}
\usepackage{amsmath,amssymb,amsfonts}
\usepackage[all]{xy}
%\usepackage[all,ps,dvips]{xy}
%%%%%%%%%%%%%%%%%%%%%%%%%%%%%%%%%%%%%%%%%%%%%%%%%%%%%%%
\numberwithin{equation}{section}
\theoremstyle{plain}
 \newtheorem{thm}{Theorem}[section]
 \newtheorem{cor}[thm]{Corollary}
 \newtheorem{lem}[thm]{Lemma}
 \newtheorem{prop}[thm]{Proposition}
\theoremstyle{definition}
 
 \newtheorem{exmp}[thm]{Example}
\theoremstyle{remark}
 \newtheorem{rem}[thm]{Remark}
%%%%%%%%%%%%%%%%%%%%%%%%%%%%%%%%%%%%%%%%%%%%%%%%%%%%%%
\DeclareMathOperator{\Ad}{Ad}
\DeclareMathOperator{\Ann}{Ann}
\DeclareMathOperator{\trace}{Trace}
\DeclareMathOperator{\sm}{sym}
\DeclareMathOperator{\Ind}{Ind}
\DeclareMathOperator{\ord}{ord}
\def\ang#1#2{\langle #1,#2\rangle}

\def\udots{\mathinner{\mkern1mu\raise1pt\vbox{\kern7pt\hbox{.}}\mkern2mu
    \raise4pt\hbox{.}\mkern2mu\raise7pt\hbox{.}\mkern1mu}}
\def\scirc{\raise2pt\hbox{${}_\circ$}}
\allowdisplaybreaks

\def\pf{{\em Proof}.\ }
\begin{document}

%%%%%%%%%%%%%%%%%%%%%%%%%%%%%%%%%%%%%%%%%%%%%%%%%%%%%%%%%%%%%%%%%%%5
\title[Boundary value problems on Riemannian Symmetric Spaces]
{Boundary value problems on Riemannian Symmetric Spaces of the noncompact Type}
\author{Toshio Oshima}
\address{Graduate School of Mathematical Sciences,
University of Tokyo, 7-3-1, Komaba, Meguro-ku, Tokyo 153-8914, Japan}
\email{oshima@ms.u-tokyo.ac.jp}
%%%
\author{Nobukazu Shimeno}
\address{School of Science and Technology, 
Kwansei Gakuin University, 
Gakuen, Sanda, Hyogo 669-1337, Japan}
\email{shimeno@kwansei.ac.jp}
%%%
\begin{abstract}
We characterize the image of the Poisson transform on  
each boundary component of a Riemannian symmetric space 
of the noncompact type by a system of differential equations. 
The system corresponds to  a generator system of a two sided 
ideals of an universal enveloping algebra, 
which are explicitly given by analogues of minimal polynomials of matrices. 
\end{abstract}
\maketitle
%%%%%%%%%%%%%%%%%%%%%%%%%%%%%%%%
\section{Introduction}
The classical Poisson integral of a function on the unit circle in  
the complex plane gives a harmonic function on the unit disk. 
More generally, each eigenfunction of the Laplace-Beltrami operator on 
the Poincar\'e disk can be represented by a generalized Poisson integral of 
a hyperfunction on the unit circle. 

The notion of the Poisson integral 
is generalized to a Riemannian symmetric space $X=G/K$ of the noncompact type, 
where $G$ is a connected real reductive Lie group and $K$ its 
maximal compact subgroup. 
The so called ``Helgason conjecture'' states that each joint eigenfunction of the 
invariant differential operators on $X$ has a Poisson integral representation 
by a 
hyperfunction on the Furstenberg boundary $G/P$ of $X$, where $P$ is 
a minimal parabolic subgroup of $G$. 
Helgason proved the conjecture for 
the Poincar\'e disk. 
Kashiwara {\em et al.}~\cite{K--} prove it generally  
by using the theory of hyperfunctions and 
the system of differential equations with regular singularities and 
their boundary value problem due to 
Kashiwara and Oshima \cite{KO}. 

The Poisson transform is an intertwining operator 
from the spherical principal series representation to the 
eigenspace representation. 
For generic parameter $\lambda$ 
of the principal series representation, 
the Poisson transform $\mathcal{P}_\lambda$ 
gives an isomorphism of the representations. 
%In particular, if the real part of $\lambda$ belongs to the closure 
%of the positive Weyl chamber, $\mathcal{P}_\lambda$ is bijective, 
%even if the representations are reducible. 
The principal series representation is 
realized on the space of the sections of a homogeneous line bundle over 
$G/P$, whose parameter is $\lambda$. 
If $\lambda=\rho$, then the line bundle is trivial and the representation is 
realized on the space of functions on $G/P$. Then the 
image of $\mathcal{P}_\rho$ consists of the harmonic 
functions, that is the functions which are annihilated by 
 the invariant differential 
operators on the symmetric space that kill the constant functions. 
We call this the ``harmonic case''. 

It is natural to pose the problem of characterizing the image of 
$\mathcal{P}_\lambda$ when the map is not bijective. 
An interesting case corresponds to the problem of characterizing 
the image of the Poisson transform from another boundary component 
of $X$ in one of Satake compactifications of $X$ 
(cf. \cite{Sa}, \cite{O2}). 
Each boundary component is of the form $G/P_\Xi$, 
where $P_\Xi$ is a parabolic subgroup of $G$. 
The Furstenberg boundary is the maximal among 
the boundary components. 

For a classical Hermitian symmetric space of tube type, 
Hua \cite{Hua} studied the Poisson integrals of functions on the Shilov boundary, 
which are generalization of the classical Poisson integrals on the 
unit disk. The Poisson integrals are harmonic functions, and 
moreover, they are annihilated by second order differential 
operators, 
which are called the Hua operators. 
Kor\'anyi and Malliavin \cite{KM}, and Johnson \cite{J1} 
showed that the Hua operators characterize the Poisson integrals 
of hyperfunctions on the Shilov boundary of the Siegel upper half planes. 
Johnson and Kor\'anyi \cite{JK} constructed the Hua operators for 
a Hermitian symmetric space of tube type in general and 
proved that they characterize the Poisson integrals 
of hyperfunctions on the Shilov boundary. 
The second author~\cite{Sn2} generalized the result to 
non-harmonic cases. In \cite{Sn4},
 he also constructed a system of differential equations 
that characterizes the image of the Poisson transform from 
a certain kind of boundary component of a Hermitian symmetric space. 

For a Hermitian symmetric space of non-tube type, Berline and Vergne~\cite{BV}  
defined generalized Hua operators, which are third order 
differential operators, 
and proved that these operators with 
invariant differential operators characterize the Poisson integrals of  
hyperfunctions on the Shilov boundary, in the harmonic case. 
Koufany and Zhang~\cite{KZ} generalize the result to non-harmonic cases. 
For $G=U(p,q)$ these authors also showed that second order operators 
characterize the image of the Poisson transform from the 
Shilov boundary, even for non-tube cases, that is the case of $p>q$. 

Johnson constructed 
a system of differential equations that characterizes the image 
of the Poisson transform from each boundary component 
for $G=SL(n,\mathbb{R})$ and $SL(n,\mathbb{C})$ in 
\cite{J2}, and for general $G$ in \cite{J3}, in the harmonic case. 

The first author~\cite{O1} proposed a method to study boundary value 
problems for various boundaries of $X$. 
He constructed a system of differential operators 
corresponding to the boundary $GL(n,\mathbb{R})/P_{n-1,1}$ of 
$GL(n,\mathbb{R})/O(n)$, where $P_{n-1,1}$ is the  maximal parabolic 
subgroup of $GL(n,\mathbb{R})$ corresponding to the partition $(n-1,1)$. 
To prove that the differential equations indeed characterize the image 
of the Poisson transform, he used the method of calculating 
differential equations for boundary values on the Fusrtenberg boundary, 
which are called ``induced equations''. All of the above mentioned 
works on the problem of characterizing the image of the Poisson transform 
from a boundary component by a system of differential equations use 
essentially the method of calculating induced equations. 

On the other hand, recently 
the first author (\cite{O4,O5,O6} and \cite{OO} with Oda), 
studied two sided ideals of a universal enveloping 
algebra of a complex reductive Lie algebra, which 
are annihilators of generalized Verma modules, and apply them 
to boundary value problems for various boundaries of a symmetric 
space. 
In this paper, 
we use two sided ideals constructed explicitly 
in \cite{O6, OO} to characterize 
the image of the Poisson transform from a boundary component of a 
symmetric space, giving several examples including previously known cases. 
We also study the case of homogeneous 
line bundle on a Hermitian symmetric space. 
Since the differential operators come from a two sided ideal of the universal 
enveloping algebra of the complexification of the Lie algebra of $G$, 
the proof that they characterize the image of the Poisson transform 
is fairly easy. Indeed we do not need to calculate induced equations 
on the Furstenberg boundary. 
For the harmonic case, our operators are different from those constructed 
by Johnson \cite{J2, J3} and more explicit. 

This paper is organized as follows. 
In \S 2 we review on representations realized on a symmetric 
space and give basic results on the Poisson transforms on 
various boundaries. 

In \S 3 we review on minimal polynomials on complex reductive Lie 
algebras, which give a generator system of the annihilator 
of a generalized Verma module after \cite{O6, OO} and 
show that the corresponding differential operators on 
a Riemannian symmetric space characterize the image of 
the Poisson transform from a boundary component of the 
symmetric space. 

In \S 4, we give examples when $G$ is $U(p,q)$, $Sp(n,\mathbb{R})$ 
or $GL(n,\mathbb{R})$. In particular, 
for $G=U(p,q)$ or $Sp(n,\mathbb{R})$ and $G/P_\Xi$ the 
Shilov boundary of $X$, our operators for the trivial 
line bundle over $X=G/K$ coincide with 
the previously known ``Hua operators'' mentioned above. 

%%%%%%%%%%%%%%%%%%%%%%%%%%%%%%%%%%%%%%%%%%%%%%%%%%%%%%%%%%%%%%%%
\section{Representations on symmetric spaces}\label{sec:Rep}

In this section we review on representations realized on 
Riemannian symmetric spaces
of the noncompact type and their characterizations by differential equations.
%We consider a general real connected semisimple Lie group $G$.
%The notation used here is in general restricted only in this section.
The statements in this section are known results or at least a reformulation 
or an easy consequence of known facts (cf.~\cite{He1}, \cite{He2}, 
\cite{K--}, \cite{Ko}, \cite{KR}, \cite{O1}, \cite{Sn1} etc.) and this 
section can be read without referring to other sections.

Let $G$ be a connected real connected semisimple Lie group, possibly  
with infinite center. 
Let $K$ be a maximal compact subgroup of $G$ modulo the center of $G$, 
$\theta$ be the corresponding Cartan involution and 
$\mathfrak g = \mathfrak k + \mathfrak p$ be the corresponding Cartan decomposition of 
the Lie algebra $\mathfrak g$ of $G$.
Fix a maximal abelian subspace $\mathfrak a_{\mathfrak p}$ of $\mathfrak p$.
Let $\Sigma(\mathfrak a_{\mathfrak p})$ be the set of the roots defined by the pair
$(\mathfrak g,\mathfrak a_{\mathfrak p})$ and fix a positive system 
$\Sigma(\mathfrak a_{\mathfrak p})^+$.
We denote its Weyl group by $W(\mathfrak a_{\mathfrak p})$ and 
the fundamental system by $\Psi(\mathfrak a_{\mathfrak p})$, 
and the half of the sum of the positive roots 
counting their multiplicities is denoted by $\rho$.
Let $G=KAN$ be the Iwasawa decomposition of $G$ 
with $\text{Lie}(A)=\mathfrak{a}_\mathfrak{p}$ and 
$N$ corresponding to 
$\Sigma(\mathfrak a_{\mathfrak p})^+$.
Then $P=MAN$ is a minimal parabolic subgroup of $G$.
Here $M$ is the centralizer of $\mathfrak a_{\mathfrak p}$ in $K$.
We denote by $\mathfrak k$, $\mathfrak m$ and $\mathfrak n$ the Lie algebras of $K$, $M$ 
and $N$, respectively.

Let $U(\mathfrak g)$ be the universal enveloping algebra of the complexification
$\mathfrak g_{\mathbb C}$ of $\mathfrak g$, which we identify with the algebra of left
invariant differential operators on $G$.
In general, for a subalgebra $\mathfrak l$ of $\mathfrak g$, we denote by
$U(\mathfrak l)$ the universal enveloping algebra of the complexification
$\mathfrak l_{\mathbb C}$ of $\mathfrak l$.
Let $S(\mathfrak g)$ be the symmetric algebra of $\mathfrak g_{\mathbb C}$.
Then the map $\sm$ of symmetrization of $S(\mathfrak g)$ to $U(\mathfrak g)$ defines
a $K$-linear bijection.
By the Killing form on $\mathfrak g_{\mathbb C}$ we identify the space 
$\mathcal O(\mathfrak p_{\mathbb C})$ of polynomial functions on the complexification 
$\mathfrak p_{\mathbb C}$ of $\mathfrak p$ with the symmetric algebra of 
$\mathfrak p_{\mathbb C}$.
Let $\mathcal O(\mathfrak p)^K$ be the space of all $K$-invariant polynomials on 
$\mathfrak p_{\mathbb C}$ and $\mathcal H$ be the space of all $K$-harmonic 
polynomials on $\mathfrak p_{\mathbb C}$.
Then we have the following $K$-linear bijection
\begin{equation}\label{eq:4.1}
\begin{aligned}
 \mathcal H\otimes \mathcal O(\mathfrak p)^K\otimes U(\mathfrak k)\ &\widetilde\rightarrow\ 
    U(\mathfrak g)\\
 h\otimes p\otimes k\ &\mapsto\ 
    \sm(h)\otimes \sm(p)\otimes k
\end{aligned}
\end{equation}
because of the Cartan decomposition $\mathfrak g = \mathfrak k + \mathfrak p$ and the 
$K$-linear bijection 
\[
\mathcal H\otimes\mathcal O(\mathfrak p)^K\ni h\otimes p\mapsto hp\in
\mathcal O(\mathfrak p)
\]
studied by \cite{KR}.

We denote by $\mathcal A(G)$ and $\mathcal B(G)$
the space of real analytic functions and that of hyperfunctions on $G$,
respectively.
Then they are left $G$-modules by $(\pi_gf)(x) = f(g^{-1}x)$ for $g\in G$
and the functions $f$ in the spaces.
We write $\mathcal A(G)_K$ for the space of all the $K$-finite elements of 
$\mathcal A(G)$.

By the decomposition
\begin{equation}
\label{eqn:enviwasawa}
 U(\mathfrak g) = \mathfrak nU(\mathfrak n + \mathfrak a_{\mathfrak p}) \oplus U(\mathfrak a_{\mathfrak p})\oplus
              U(\mathfrak g)\mathfrak k
\end{equation}
coming from the Iwasawa decomposition $\mathfrak g=\mathfrak k + \mathfrak a_{\mathfrak p} + \mathfrak n$,
we define $D_{\mathfrak a_{\mathfrak p}}\in U(\mathfrak a_{\mathfrak p})$ for $D\in U(\mathfrak g)$ so that
$D - D_{\mathfrak a_{\mathfrak p}}\in\mathfrak nU(\mathfrak n + \mathfrak a_{\mathfrak p}) + U(\mathfrak g)\mathfrak k$
and put $\gamma(D) = e^{-\rho}\scirc D_{\mathfrak a_{\mathfrak p}}\scirc e^\rho$.
Here $e^{\rho}$ is the function on $A$ defined by $e^{\rho}(a) = a^\rho$.

Note that the kernel of the restriction of $\gamma$ to the space of
all the $K$-invariants $U(\mathfrak g)^K$ of $U(\mathfrak g)$ equals 
$U(\mathfrak g)^K\cap U(\mathfrak g)\mathfrak k$ and the restriction defines the 
Harish-Chandra isomorphism
\begin{equation}\label{def:bargamma}
 \bar\gamma: \mathbb D(G/K)\simeq 
             U(\mathfrak g)^K/(U(\mathfrak g)^K\cap U(\mathfrak g)\mathfrak k)
             \rightarrow U(\mathfrak a_{\mathfrak p})^W
\end{equation}
onto the space $U(\mathfrak a_{\mathfrak p})^W$ of all the $W(\mathfrak a_{\mathfrak p})$-invariants in 
$U(\mathfrak a_{\mathfrak p})$.
Here $\mathbb D(G/K)$ is the algebra of invariant differential operators on
$G/K$.
Note also that $\sm(O(\mathfrak p)^K)$ is isomorphic to $\mathbb D(G/K)$ through
%$\gamma$ and $\bar\gamma$ 
(\ref{eqn:enviwasawa}) and (\ref{def:bargamma}) 
as $K$-modules.

Identifying $U(\mathfrak a_{\mathfrak p})$ with the space of polynomial functions on 
the complex dual $\mathfrak a_{\mathbb C}^*$ of $\mathfrak a_{\mathfrak p}$, we put
\begin{equation}
 \gamma_\lambda(D) = \gamma(D)(\lambda)\in\mathbb C
\end{equation}
for $\lambda\in\mathfrak a_{\mathbb C}^*$ and $D\in U(\mathfrak g)$.
Now we define
\begin{equation}\label{eq:4.5}
 J_\lambda = U(\mathfrak g)\mathfrak k
     + \sum_{p\in\mathcal O(\mathfrak p)^K}U(\mathfrak g)(\sm(p)-\gamma_\lambda(\sm(p)))
\end{equation}
and
\begin{equation}\label{eq:4.6}
 \mathcal A(G/K;\mathcal M_\lambda) =
 \{u\in\mathcal A(G); Du = 0\quad\text{for }D\in J_\lambda\}
\end{equation}
and put 
$\mathcal A(G/K;\mathcal M_\lambda)_K = \mathcal A(G)_K\cap \mathcal A(G/K;\mathcal M_\lambda)$.
Here $\mathcal A(G/K; \mathcal M_\lambda)$ is naturally a subspace of the space 
$\mathcal A(G/K)$ of real analytic functions on $G/K$ because the function in 
$\mathcal A(G/K; \mathcal M_\lambda)$ is right $K$-invariant.
Now we can state our basic theorem.

\begin{thm}\label{thm:4.1}
Fix $\lambda\in\mathfrak a_{\mathbb C}^*$ and define a bilinear form
\begin{equation}
\begin{aligned}
  \mathcal H\otimes\mathcal A(G/K;\mathcal M_\lambda)\ &\rightarrow\ \mathbb C\\
  (h,u)\ &\mapsto\ 
   \ang{h}{u} = (\sm(h)u)(e)
\end{aligned}
\end{equation}
and for a subspace $V$ of $\mathcal A(G/K;\mathcal M_\lambda)$, put
\begin{equation}
 H(V) = \{h\in\mathcal H; \ang{h}{u} = 0\quad\text{for }u\in V\}.
\end{equation}

{\rm i)} The bilinear form $\ang{\ }{\ }$ is $K$-invariant and non-degenerate.

{\rm ii)} If $V$ is a subspace of $\mathcal A(G/K;\mathcal M_\lambda)_K$, then
\[
 V = \{u\in\mathcal A(G/K;\mathcal M_\lambda)_K; \ang{h}{u}=0
      \quad\text{for }h\in H(V)\}.
\]

{\rm iii)} There are natural bijections between the following sets of modules.
\begin{align*}
 \mathfrak V(\lambda) &= \{V\subset\mathcal A(G/K;\mathcal M_\lambda);
  V\text{ is a close subspace of }C^\infty(G)\text{ and }G
   \text{-invariant}\},\\
 \mathfrak V(\lambda)_K &= \{V_K\subset\mathcal A(G/K;\mathcal M_\lambda)_K;
  V_K\text{ is a }\mathfrak g\text{-invariant subspace}\},\\
 \mathfrak J(\lambda) &= \{ J\supset J_\lambda; 
  J\text{ is a left ideal of }U(\mathfrak g)\}.
\end{align*}
Here the bijections are given by
\begin{align}
 \mathfrak V(\lambda)\ni V\ &\mapsto\ 
 V\cap\mathcal A(G)_K\in\mathfrak V(\lambda)_K,\label{eq:4.9}\\
 \mathfrak V(\lambda)_K\ni V_K\ &\mapsto\ 
 J_\lambda + \sum_{p\in H(V_K)}U(\mathfrak g)\sm(p)\in\mathfrak J(\lambda)\label{eq:4.10}\\
 \mathfrak J(\lambda)\ni J\ &\mapsto\ 
 \{u\in\mathcal A(G); Du=0\text{ for }D\in J\}\in\mathfrak V(\lambda)\label{eq:4.11}
\end{align}
\end{thm}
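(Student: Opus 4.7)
My plan is to reduce everything to a single vector-space identification. Using~(\ref{eq:4.1}), every $D\in U(\mathfrak g)$ can be written as $\sum\sm(h_i)\sm(p_i)k_i$ with $h_i\in\mathcal H$, $p_i\in\mathcal O(\mathfrak p)^K$, $k_i\in U(\mathfrak k)$; the definition~(\ref{eq:4.5}) of $J_\lambda$ reduces each $\sm(p_i)$ modulo $J_\lambda$ to the scalar $\gamma_\lambda(\sm(p_i))$ and each $k_i$ to its constant term in $U(\mathfrak k)=\mathbb C\oplus\mathfrak kU(\mathfrak k)$, so every coset has a unique representative $\sm(h)$ with $h\in\mathcal H$. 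This gives a $K$-equivariant linear isomorphism $U(\mathfrak g)/J_\lambda\xrightarrow{\sim}\mathcal H$; via this, the form $\ang{h}{u}=(\sm(h)u)(e)$ becomes the evaluation pairing $(D,u)\mapsto(Du)(e)$ between $U(\mathfrak g)/J_\lambda$ and the left $U(\mathfrak g)$-module $\mathcal A(G/K;\mathcal M_\lambda)$. All three parts of the theorem are unpackings of this duality.

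For~(i), the $K$-invariance is a short computation using the $K$-equivariance of $\sm$ together with the right $K$-invariance of every $u\in\mathcal A(G/K;\mathcal M_\lambda)$. Non-degeneracy in $u$: if $\ang{h}{u}=0$ for every $h\in\mathcal H$, then by the identification above $(Du)(e)=0$ for every $D\in U(\mathfrak g)$, so all Taylor coefficients of $u$ at $e$ vanish and $u\equiv 0$ by real-analyticity and connectedness of $G$. Non-degeneracy in $h$ requires more: the hypothesis $\sm(h)u(e)=0$ for every $u$, applied to the $G$-translates $\pi_g u$ (which remain in the $G$-invariant space $\mathcal A(G/K;\mathcal M_\lambda)$) and combined with $\sm(h)\pi_g=\pi_g\sm(h)$, forces $\sm(h)u\equiv 0$ as a function for every $u$. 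To conclude $h=0$ I would invoke the Poisson transform from the spherical principal series at parameter $\lambda$: its image in $\mathcal A(G/K;\mathcal M_\lambda)_K$, combined with the Kostant--Rallis decomposition, realises every $K$-isotypic component of $\mathcal H$.

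For~(ii), the inclusion $\subset$ is immediate. For $\supset$, decompose both $\mathcal H=\bigoplus_\tau\mathcal H[\tau^*]$ and $\mathcal A(G/K;\mathcal M_\lambda)_K=\bigoplus_\tau\mathcal A(G/K;\mathcal M_\lambda)_K[\tau]$ into $K$-isotypic components; each summand is finite-dimensional and the pairing is non-degenerate on each pair $\mathcal H[\tau^*]\times\mathcal A(G/K;\mathcal M_\lambda)_K[\tau]$ by~(i). For the $K$-stable $V_K\in\mathfrak V(\lambda)_K$ relevant to~(iii) we have $V=\bigoplus V[\tau]$, and finite-dimensional linear-algebra duality on each $K$-type yields $V[\tau]=H(V)[\tau^*]^\perp$, hence $V=H(V)^\perp$.

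Assertion~(iii) is then essentially formal. The correspondence $\mathfrak V(\lambda)\leftrightarrow\mathfrak V(\lambda)_K$ is the standard Harish-Chandra passage between a closed $G$-invariant subspace of $C^\infty(G)$ and its dense $K$-finite part. For $\mathfrak V(\lambda)_K\leftrightarrow\mathfrak J(\lambda)$: construction~(\ref{eq:4.10}) produces a left ideal containing $J_\lambda$, and conversely~(\ref{eq:4.11}) produces a $G$-invariant closed subspace (left-invariant differential operators commute with left translations) whose $K$-finite part is $(\mathfrak g,K)$-invariant; the two round trips are identities by~(ii) combined with the matching $H(V_K)\subset\mathcal H\longleftrightarrow J/J_\lambda\subset U(\mathfrak g)/J_\lambda$ under the isomorphism of the first paragraph. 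The step I expect to be most delicate is the $h$-side non-degeneracy in~(i): it is the only place that requires more than linear algebra and the structural identities of \S\ref{sec:Rep}, and it ultimately rests on the Poisson transform providing enough joint eigenfunctions in $\mathcal A(G/K;\mathcal M_\lambda)$ to realise every $K$-type of $\mathcal H$.
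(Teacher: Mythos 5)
Your proposal is correct and follows essentially the same route as the paper: $K$-isotypic decomposition with Schur's lemma, non-degeneracy on the $u$-side from real-analyticity via \eqref{eq:4.1} and the identification $U(\mathfrak g)=J_\lambda\oplus\sm(\mathcal H)$, non-degeneracy on the $h$-side from the multiplicity count $\dim\mathcal A(G/K;\mathcal M_\lambda)_\delta=\dim\mathcal H_{\delta^*}=[\Ind_M^K\mathbf 1:\delta]$ (Kostant--Rallis plus the Poisson transform), and then ii) and iii) by blockwise finite-dimensional duality and the formal correspondences. The one point to repair is at the step you yourself flag as delicate: you must invoke $\mathcal P_{w\lambda}$ with $w\lambda$ dominant (Theorem~\ref{thm:4.2}~ii), noting $\mathcal M_{w\lambda}=\mathcal M_\lambda$), not $\mathcal P_\lambda$ itself, since when $e(\lambda)=0$ the image of $\mathcal P_\lambda$ has $K$-multiplicities $[\Ind_M^K\mathbf 1:\delta]-[\mathcal H_\lambda:\delta^*]$ and so does not realise every $K$-type of $\mathcal H$.
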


Before the proof of this theorem we review on the Poisson transform.
The $G$-module
\begin{align}\label{eq:4.12}
 \mathcal B(G/P;\mathcal L_\lambda)  =
 \{f\in\mathcal B(G)\,;\, & f(gman)  = a^{\lambda - \rho}f(g) \\ 
 & \,\text{ for }(g,m,a,n)   \in G\times M\times A\times N\} \notag
\end{align}
is the space of hyperfunction sections of spherical principal series of $G$
parametrized by $\lambda\in\mathfrak a_{\mathbb C}^*$.
Put $\mathcal A(G/P;\mathcal L_\lambda) = \mathcal B(G/P;\mathcal L_\lambda)\cap\mathcal A(G)$.
Define the $K$-fixed element $K\times A\times N\ni(k,a,n)\mapsto 
{\mathbf 1}_\lambda(kan) = a^{\lambda-\rho}$ 
of $\mathcal A(G/P;\mathcal L_\lambda)$ and put 
$P_\lambda(g) = {\mathbf 1}_{-\lambda}(g^{-1})$.
By the $G$-invariant bilinear form
\begin{equation}
\begin{aligned}
  \mathcal B(G/P;\mathcal L_\lambda)\times \mathcal A(G/P;\mathcal L_{-\lambda})
  \ &\rightarrow\ \mathbb C\\
  (\phi, f)\ &\mapsto\ \ang{\phi}{f}_\lambda = \int_K\phi(k)f(k)dk
\end{aligned}
\end{equation}
with the normalized Haar measure $dk$ on $K$, we define the Poisson transform
\begin{equation}\label{eq:4.14}
\begin{aligned}
 \mathcal P_\lambda: \mathcal B(G/P;\mathcal L_\lambda)\ &\rightarrow\ \mathcal B(G)\\
 \phi\ &\mapsto\ \mathcal P_\lambda\phi(g)
 = \ang{\pi_{g^{-1}}\phi}{{\mathbf 1}_{-\lambda}}_\lambda
 = \int_K\phi(gk)dk \\
 &\quad\quad\quad\quad\quad
 = \int_K \phi(k)P_\lambda(k^{-1}g)dk.
\end{aligned}
\end{equation}
Then it is known that the image of $\mathcal P_\lambda$ is contained in 
$\mathcal A(G/K;\mathcal M_\lambda)$ because $DP_\lambda = \gamma_\lambda(D)P_\lambda$
for $D\in U(\mathfrak g)^K$. 
(If the center $Z$ of $G$ is infinite, integrations over $K$ in the 
definitions of pairing $\langle\,\cdot\,,\cdot\,\rangle_\lambda$ and 
the Poisson transform should be understood to be normalized integral over 
$K/Z$. But we write $K$ for simplicity.)

For $\alpha\in\Sigma(\mathfrak a_{\mathfrak p})$
and $w\in W(\mathfrak a_{\mathfrak p})$, 
we put
\begin{equation}\label{eq:4.15}
\begin{aligned}
 \Sigma(\mathfrak a_{\mathfrak p})_o^+ &= \{\alpha\in\Sigma(\mathfrak a_{\mathfrak p})^+;
 \tfrac\alpha2\notin\Sigma(\mathfrak a_{\mathfrak p})\},\\
 e_\alpha(\lambda) &= 
               \left\{
         \Gamma\left(\tfrac{\lambda_\alpha}4 + \tfrac{m_\alpha}4
                   + \tfrac12\right)
         \Gamma\left(\tfrac{\lambda_\alpha}4 + \tfrac{m_\alpha}4
                   + \tfrac {m_{2\alpha}}2\right)
               \right\}^{-1},\\
 e(\lambda) &= \prod_{\alpha\in\Sigma(\mathfrak a_{\mathfrak p})_o^+}
           e_\alpha(\lambda),
\\
c(\lambda) & =C e(\lambda)
\prod_{\alpha\in\Sigma(\mathfrak a_{\mathfrak p})_o^+}
2^{-\frac{\lambda_\alpha}2}
\Gamma\left(\tfrac{\lambda_\alpha}2\right),
\end{aligned}
\end{equation}
where $m_\alpha$ is the multiplicity of the root $\alpha$, 
$\lambda_\alpha = 2\frac{\ang{\lambda}{\alpha}}{\ang{\alpha}{\alpha}}$ 
and $C$ is a constant determined by $c(\rho)=1$.

The following theorem is the main result in \cite{K--}.
\begin{thm}\label{thm:4.2}
Let $\lambda\in\mathfrak a_{\mathbb C}^*$. 

{\rm i)}
$\mathcal P_\lambda$ gives a topological $G$-isomorphism of 
$\mathcal A(G/P;\mathcal L_\lambda)$ onto $\mathcal A(G/K;\mathcal M_\lambda)$
if and only if $e(\lambda)\ne0$. 

{\rm ii)} 
Let $w$ be an element of $W(\mathfrak{a}_\mathfrak{p})$ 
%with the minimal length 
which satisfies
\begin{equation}\label{eq:4.17}
 \operatorname{Re}\ang{w\lambda}{\alpha}\ge0
 \quad\text{for all }\alpha\in\Sigma(\mathfrak a_{\mathfrak p})^+, 
\end{equation}
then $\mathcal{P}_{w\lambda}$ gives a 
topological $G$-isomorphism
\begin{equation}\label{eq:4.18}
 \mathcal P_{w\lambda}: \mathcal B(G/P;\mathcal L_{w\lambda})
 \ \widetilde\to\ \mathcal A(G/K;\mathcal M_\lambda).
\end{equation}
\end{thm}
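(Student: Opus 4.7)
The plan is to prove (i) by separate injectivity and surjectivity arguments, and deduce (ii) by transporting across the Weyl group via Knapp--Stein intertwining operators. For injectivity in (i), I would decompose into $K$-isotypic components: each $\delta\in\hat K$ with $V_\delta^M\ne 0$ appears in $\mathcal A(G/P;\mathcal L_\lambda)_K$ with multiplicity $\dim V_\delta^M$, and $\mathcal P_\lambda$ restricted to such an isotypic piece is governed by a matrix whose entries reduce, after integration over $K/M$ and the Iwasawa $a$-projection, to products of $\Gamma$-factors of precisely the type appearing in $e_\alpha(\lambda)$. Following the Kostant--Helgason analysis of these matrices, the determinant on each $K$-type vanishes exactly on the zero set of $e(\lambda)$, so $e(\lambda)\ne 0$ forces injectivity of $\mathcal P_\lambda$ on every $K$-type, and hence on all of $\mathcal A(G/P;\mathcal L_\lambda)$ by density of the $K$-finite vectors.

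For surjectivity in (i), I would invoke the Kashiwara--Oshima theory of boundary values for systems with regular singularities \cite{KO}, exactly as in \cite{K--}. The idea is to embed $G/K$ into a real-analytic manifold $\overline X$ whose boundary contains $G/P$, to show that the system defining $\mathcal A(G/K;\mathcal M_\lambda)$ extends to a coherent system on $\overline X$ with regular singularities along this boundary, and to identify the characteristic exponents along $G/P$ as the set $\{w\lambda-\rho:w\in W(\mathfrak a_\mathfrak{p})\}$. Under $e(\lambda)\ne 0$ the exponent $\lambda-\rho$ is distinguished, and the boundary value along this exponent sends a joint eigenfunction to a hyperfunction section of $\mathcal L_\lambda$ whose Poisson transform reconstructs the original function; the reconstruction step uses the asymptotic expansion at $G/P$ guaranteed by the regular singular structure.

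For (ii), the positivity condition $\operatorname{Re}\langle w\lambda,\alpha\rangle\ge 0$ on positive roots makes all $\Gamma$-factors entering $e_\alpha(w\lambda)$ nonzero, so $e(w\lambda)\ne 0$ and (i) already yields an isomorphism $\mathcal P_{w\lambda}\colon \mathcal A(G/P;\mathcal L_{w\lambda})\to\mathcal A(G/K;\mathcal M_\lambda)$. To upgrade the source to the full hyperfunction space $\mathcal B(G/P;\mathcal L_{w\lambda})$, I would use a Knapp--Stein intertwining operator $A(w^{-1})\colon\mathcal B(G/P;\mathcal L_{w\lambda})\to\mathcal B(G/P;\mathcal L_\lambda)$: its composition with $\mathcal P_\lambda$ equals $\mathcal P_{w\lambda}$ up to a nonzero scalar determined by the action on the $K$-fixed line, and the hyperfunction directions in $\mathcal B(G/P;\mathcal L_{w\lambda})$ that do not come from analytic sections correspond precisely to the kernel of $A(w^{-1})$, so that $\mathcal P_{w\lambda}$ extends to a bijection on the whole of $\mathcal B(G/P;\mathcal L_{w\lambda})$.

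The hard part is the surjectivity in (i): it requires the full strength of the boundary value machinery of \cite{KO} together with a careful radial-part analysis of the invariant differential operators at each Satake boundary stratum, in order both to verify the regular singular structure and to identify the boundary datum on $G/P$ as living in the line bundle $\mathcal L_\lambda$ rather than some nontrivial shift. By comparison, the injectivity via $K$-type matrices and the Weyl-group reduction in (ii) are formal consequences of the principal series structure and the intertwining operator calculus.
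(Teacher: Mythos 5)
The paper itself gives no proof of this theorem: it is quoted as the main result of \cite{K--}, with the injectivity criterion attributed to \cite{He3} in Remark~\ref{rem:4.3}~i). So the comparison has to be with the arguments of those references, and measured against them your sketch of part~i) is essentially the standard proof: injectivity by reducing $\mathcal P_\lambda$ on each $K$-type to the Kostant--Helgason matrices whose determinants vanish exactly where $e(\lambda)$ does, and surjectivity by the Kashiwara--Oshima theory of boundary values for the system $\mathcal M_\lambda$, which acquires regular singularities along $G/P$ in a suitable compactification with characteristic exponents $w\lambda-\rho$. One point to be aware of: as printed, part~i) reads $\mathcal A(G/P;\mathcal L_\lambda)$, but the theorem of \cite{K--} (and everything else in this section, e.g.\ part~ii) and Remark~\ref{rem:4.3}~ii), which exhibits the distribution and $C^\infty$ sections as proper subspaces of the preimage) requires the hyperfunction space $\mathcal B(G/P;\mathcal L_\lambda)$; your surjectivity argument correctly produces a hyperfunction boundary value, so you are in fact proving the intended $\mathcal B$-version.

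Your part~ii), however, contains a genuine error, and also misses that ii) is an immediate corollary of i). Since $\gamma(\sm(p))\in U(\mathfrak a_{\mathfrak p})^W$ for $p\in\mathcal O(\mathfrak p)^K$, one has $\gamma_{w\lambda}=\gamma_\lambda$ on $U(\mathfrak g)^K$, hence $J_{w\lambda}=J_\lambda$ and $\mathcal A(G/K;\mathcal M_{w\lambda})=\mathcal A(G/K;\mathcal M_\lambda)$; and the condition \eqref{eq:4.17} keeps the arguments of both $\Gamma$-factors in \eqref{eq:4.15} in the right half plane (using $m_\alpha\ge1$), so $e(w\lambda)\ne0$. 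Part~i) applied at the parameter $w\lambda$ then gives \eqref{eq:4.18} with no further work. Your proposed ``upgrade'' from analytic to hyperfunction sections via a Knapp--Stein operator $A(w^{-1})$ does not work as described: $\mathcal A(G/P;\mathcal L_{w\lambda})$ is dense in $\mathcal B(G/P;\mathcal L_{w\lambda})$, not a topological complement of anything, so there are no ``hyperfunction directions that do not come from analytic sections'' to be matched with the kernel of $A(w^{-1})$; moreover a factorization of the form $\mathcal P_{w\lambda}=c^{-1}\,\mathcal P_\lambda\circ A(w^{-1})$ cannot yield bijectivity of $\mathcal P_{w\lambda}$ from properties of $\mathcal P_\lambda$ in the case $e(\lambda)=0$, which is precisely the case part~ii) is designed to cover.
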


\begin{rem}\label{rem:4.3}
{\rm i)}
The equivalence of the injectivity of $\mathcal P_\lambda$ and the condition
$e(\lambda)\ne0$ is proved in \cite{He3}.

{\rm ii)}
Suppose $e(\lambda)\ne0$.
Let $\mathcal D'(G)$ and $C^\infty(G)$ denote the space of distributions and
that of $C^\infty$-functions on $G$, respectively.
Then
\begin{equation}
\begin{aligned}
 &\mathcal P_\lambda(\mathcal B(G/P;\mathcal L_\lambda)\cap\mathcal 
 D'(G))\label{eq:4.19}\\
 &\quad
 = \{u\in\mathcal A(G/K;\mathcal M_\lambda)\,;\,
    \text{there exist $C$ and $k$ with }|u(g)|\le C\exp k|g|
  \},\\
\end{aligned}
\end{equation}
\begin{equation}
\begin{aligned}
 &\mathcal P_\lambda(\mathcal B(G/P;\mathcal L_\lambda)\cap C^\infty(G))\label{eq:4.20}\\
 &\quad
 = \{u\in\mathcal A(G/K;\mathcal M_\lambda)\,;\,
    \text{there exist $k$ such that for any $D\in U(\mathfrak k)$}\\
 &\quad\quad\quad\text{ we can choose
    $C_D>0$ with } |\pi_Du(g)|\le C_D\exp k|g|
  \}.
\end{aligned}
\end{equation}
Here $U(\mathfrak k)$ is the universal enveloping algebra of the complexification
of $\mathfrak k$ and $|g| = \ang HH^{\frac 12}$ with the Killing form 
$\ang{\ }{\ }$ if $g\in K\exp H K$ with $H\in\mathfrak a_{\mathfrak p}$.
Note that $U(\mathfrak k)$ in (\ref{eq:4.20}) may be replaced by $U(\mathfrak g)$.

In fact, \eqref{eq:4.19} is given in \cite[Corollary~5.5]{OS1}.
Suppose $u=\mathcal P_\lambda f$.
Since $P_\lambda$ is contained in the set \eqref{eq:4.19}, the 
$U(\mathfrak g)$-equivariance and the last expression in \eqref{eq:4.14} implies that
the left hand side of \eqref{eq:4.20} is contained in the right hand side of \eqref{eq:4.20}.
Note that the inverse of $\mathcal P_\lambda$ is the map of taking 
boundary values.  We can see from the definition that the order of
distribution of the boundary value $f$ of $u$ is estimated by $k$ in \eqref{eq:4.19} 
(cf.~\cite[the proof of Lemma~2.19]{OS1} or \cite{O3}).
If $u$ is contained in the right hand side of \eqref{eq:4.20},
the order of $\pi_Df$ is uniformly bounded for all $D\in U(\mathfrak k)$ and 
hence $f|_K\in C^\infty(K)$.
A different proof can be found in \cite{BS}.
\end{rem}

{\em Proof of Theorem~\ref{thm:4.1}.} 
Let $X\in\mathfrak g$, $k\in K$ and $u\in\mathcal A(G/K)$.
Then
\begin{align*}
 (X\pi_ku)(e) & = \left.\tfrac d{dt}u\bigl(k^{-1}\exp tX\bigr)\right|_{t=0}
              = \left.\tfrac d{dt}u\bigl((\exp t\Ad(k^{-1})X)k^{-1}\bigr)\right|_{t=0} \\
&              = (\Ad(k)^{-1}Xu)(e)
\end{align*}
and therefore the bilinear from $\ang{\ }{\ }$ in Theorem~\ref{thm:4.1} is 
$K$-invariant.

Let $\hat K$ be the set of equivalence classes of irreducible representations
of $K$.
For $\delta$, $\tau\in\hat K$ we denote by $\mathcal A(G/K;\mathcal M_\lambda)_\delta$
and $\mathcal H_\tau$ the $\delta$ isotopic components of 
$\mathcal A(G/K;\mathcal M_\lambda)$ and $\tau$ isotopic components of $\mathcal H$, 
respectively.
In general, for a $K$-module $U$ we denote by $U_\delta$ the subspace
of $K$-isotopic components $\delta$.
Then the $K$-equivariant map
\[
 \mathcal A(G/K;\mathcal M_\lambda)_\delta\ni u\mapsto
 (\mathcal H_\tau\ni h\mapsto \ang hu\in \mathbb C)\in \mathcal H_{\tau^*}
\]
is identically zero if $\delta\ne\tau^*$ by Schur's lemma, where $\tau^*$ is
the dual of $\tau$.

Suppose $u\in\mathcal A(G/K;\mathcal M_\lambda)_\delta$ satisfies
$\ang hu = 0$ for any $h\in\mathcal H_{\delta^*}$.
Then $\ang hu = 0$ for any $h\in\mathcal H$ and therefore it follows
from \eqref{eq:4.1}, \eqref{eq:4.5} and \eqref{eq:4.6} that $(Du)(e)=0$ for all $D\in U(\mathfrak g)$.
Hence $u=0$ because $u$ is real analytic.
On the other hand, since $\mathcal H$ and $\mathcal A(G/K;\mathcal M_\lambda)_K$ are
isomorphic to $\Ind_M^K\mathbf 1$ (cf.~\cite{KR} and Theorem~\ref{thm:4.2})
$\dim \mathcal A(G/K;\mathcal M_\lambda)_\delta = \dim \mathcal H_{\delta^*}$
and hence we can conclude that $\ang{\ }{\ }$ defines a non-degenerate 
bilinear form on $\mathcal A(G/K;\mathcal M_\lambda)_\delta \times 
\mathcal H_{\delta^*}$ and we have i) and ii).
Here we remark that the results follows from the weaker relation
$\dim\mathcal A(G/K;\mathcal M_\lambda)_\delta \ge \dim\mathcal H_{\delta^*}$.

First note that the map \eqref{eq:4.9} is a bijection of $\mathfrak V(\lambda)$ onto
$\mathfrak V(\lambda)_K$ whose inverse is the map of taking the closure in 
$C^\infty(G)$.
The map is still bijective even if it is restricted to the spaces killed
by a left ideal $J$ of $U(\mathfrak g)$.
Moreover remark that for $X\in\mathfrak g$, $D\in U(\mathfrak g)$ and 
$u\in\mathcal A(G/K)$ we have $(D\pi_Xu)(e) = -(XDu)(e)$.

Let $V_K\in\mathfrak V(\lambda)_K$.
Then $(D\sm(h)u)(e)=0$ for $D\in U(\mathfrak g)$, $h\in H(V_K)$ and 
$u\in V_K$ because of the above remark.
Note that for a left ideal $J$ of $U(\mathfrak g)$ and a function $u$ in
$\mathcal A(G)$, the condition $Du=0$ for all $D\in J$ is equivalent to
the condition $(Du)(e)=0$ for all $D\in J$.
Hence we have
\begin{align*}
 V_K &= \{u\in\mathcal A(G/K;\mathcal M_\lambda)_K; \ang hu = 0
      \quad\text{for }h\in H(V_K)\}\\
   &= \{u\in\mathcal A(G/K;\mathcal M_\lambda)_K; (D\sm(h)u)(e) = 0
      \quad\text{for }h\in H(V_K)\text{ and }D\in U(\mathfrak g)\}\\
   &= \{u\in\mathcal A(G)_K; Du = 0
      \quad\text{for }D\in J_\lambda + \sum_{h\in H(V_K)}U(\mathfrak g)\sm(h)
      \}.
\end{align*}

Let $J$ be a left ideal of $U(\mathfrak g)$ containing $J_\lambda$.
Then \eqref{eq:4.1} and \eqref{eq:4.5} show that 
\begin{equation}\label{eq:4.21}
\begin{aligned}
  J &= J_\lambda \oplus \{\sm(h); h\in\mathcal H_J\}\quad\text{with}\\
  \mathcal H_J &= \sm^{-1}(J)\cap\mathcal H
\end{aligned}
\end{equation}
and
\begin{align*}
\{u\in\mathcal A(G)_K\,;\, & Du = 0\quad  \text{for \ }D\in J\} 
\\ & =
\{u\in\mathcal A(G/K;\mathcal M_\lambda)_K; \sm(h)u = 0\quad\text{for \ }h
\in\mathcal H_J\}.
\end{align*}
Hence the map of $\mathfrak J(\lambda)$ to $\mathfrak V(\lambda)_K$ is injective
and we have the Theorem~\ref{thm:4.1}.
\qed

\begin{thm}\label{thm:4.4}
{\rm i)}
The map
\begin{equation}\label{eq:4.22}
\mathcal H\ni h\ \mapsto \pi_{\sm(h)}{\mathbf 1}_\lambda\in
\mathcal A(G/P;\mathcal L_{\lambda})_K
\end{equation}
is $K$-linear. 
It is bijective if and only if $e(-\lambda)\ne0$.

{\rm ii)}
\begin{equation}\label{eq:4.23}
 \gamma_\lambda(D) = \pi_D({\mathbf 1}_{-\lambda})(e)
 \quad\text{for \ }D\in U(\mathfrak g).
\end{equation}

{\rm iii)}
Putting
\begin{equation}\label{eq:4.24}
 \mathcal H_\lambda = \{h\in\mathcal H; \gamma_\lambda(\sm(\Ad(k)h)) = 0
 \quad\text{for all }k\in K\}
\end{equation}
and 
\begin{equation}\label{eq:4.25}
 \bar J_\lambda =  J_\lambda + \sum_{h\in\mathcal H_\lambda}U(\mathfrak g)\sm(h),
\end{equation}
we have
\begin{equation}\label{eq:4.26}
\operatorname{Im}\mathcal P_\lambda = \{u\in\mathcal A(G);\ Du=0\quad\text{for }
D\in\bar J_\lambda\}.
\end{equation}
\end{thm}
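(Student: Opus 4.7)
The plan is to prove the three parts in the order (ii), (iii), (i), since (iii) and (i) will both use the formula from (ii).

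For (ii), I first establish the general identity $\pi_D f(e) = \sigma(D) f(e)$ for any $f\in\mathcal A(G)$ and $D\in U(\mathfrak g)$, where $\sigma$ is the principal antiautomorphism of $U(\mathfrak g)$ ($\sigma(X) = -X$ on $\mathfrak g$) acting as the corresponding left-invariant operator; this follows from $\pi_X = -R_X$, the coincidence of left- and right-invariant extensions of $X\in\mathfrak g$ at $e$, and the algebra-homomorphism property of $\pi$. I then decompose $D = D_{\mathfrak a_{\mathfrak p}} + E_1 + E_2$ via \eqref{eqn:enviwasawa} with $E_1\in\mathfrak n U(\mathfrak n+\mathfrak a_{\mathfrak p})$ and $E_2\in U(\mathfrak g)\mathfrak k$, so $\sigma(E_1)\in U(\mathfrak n+\mathfrak a_{\mathfrak p})\mathfrak n$ and $\sigma(E_2)\in\mathfrak kU(\mathfrak g)$. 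Every $X\in\mathfrak n$ kills $\mathbf 1_{-\lambda}$ pointwise (the $A$-part of the Iwasawa decomposition of $ge^{tX}$ is independent of $t$), so $\sigma(E_1)\mathbf 1_{-\lambda}\equiv 0$; and since left-invariant operators preserve left-$K$-invariance, $D'\mathbf 1_{-\lambda}$ is left-$K$-invariant for every $D'\in U(\mathfrak g)$, giving $\sigma(E_2)\mathbf 1_{-\lambda}(e)=0$. Finally $\sigma(D_{\mathfrak a_{\mathfrak p}})$ acts on $\mathbf 1_{-\lambda}|_A(a) = a^{-\lambda-\rho}$ as evaluation of the polynomial $\sigma(D_{\mathfrak a_{\mathfrak p}})$ at $-\lambda-\rho$, equivalently of $D_{\mathfrak a_{\mathfrak p}}$ at $\lambda+\rho$, which equals $\gamma_\lambda(D)$.

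For (iii), the strategy via Theorem~\ref{thm:4.1}~iii is to identify $H((\operatorname{Im}\mathcal P_\lambda)_K)$ with $\mathcal H_\lambda$. Combining (ii) with the conjugation $\pi_{k^{-1}}\pi_{\sm(h)}\pi_k = \pi_{\sm(\Ad(k^{-1})h)}$, the left-$K$-invariance of $\mathbf 1_{-\lambda}$, and the identity $f(k) = \pi_{k^{-1}}f(e)$, I obtain
\[
 \pi_{\sm(h)}\mathbf 1_{-\lambda}(k) = \gamma_\lambda\bigl(\sm(\Ad(k^{-1})h)\bigr)\qquad(k\in K),
\]
so $\pi_{\sm(h)}\mathbf 1_{-\lambda}\equiv 0$ iff $h\in\mathcal H_\lambda$, since sections of $\mathcal L_{-\lambda}$ are determined by their restriction to $K$. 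For $\phi\in\mathcal B(G/P;\mathcal L_\lambda)_K$ and homogeneous $h\in\mathcal H$, I then derive
\[
 \langle h, \mathcal P_\lambda\phi\rangle = \langle\phi, \pi_{\sm(h)}\mathbf 1_{-\lambda}\rangle_\lambda
\]
by applying $Df(e) = \pi_{\sigma(D)}f(e)$ to $f = \mathcal P_\lambda\phi$, the $G$-equivariance $\pi_D\mathcal P_\lambda\phi = \mathcal P_\lambda(\pi_D\phi)$, the formula $\mathcal P_\lambda\psi(e) = \langle\psi, \mathbf 1_{-\lambda}\rangle_\lambda$, and the $G$-invariance $\langle\pi_D\phi,\psi\rangle_\lambda = \langle\phi,\pi_{\sigma(D)}\psi\rangle_\lambda$ of the principal series pairing (the two signs $(-1)^{\deg h}$ from $\sigma(\sm(h)) = (-1)^{\deg h}\sm(h)$ cancel). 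Non-degeneracy of the $K$-finite pairing (both sides reducing to $K$-finite sections over $K/M$) gives $H((\operatorname{Im}\mathcal P_\lambda)_K) = \mathcal H_\lambda$, and Theorem~\ref{thm:4.1}~iii yields the conclusion, using that $\operatorname{Im}\mathcal P_\lambda$ is a closed $G$-invariant subspace of $C^\infty(G)$.

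For (i), $K$-linearity follows immediately from $\pi_{\sm(\Ad(k)h)} = \pi_k\pi_{\sm(h)}\pi_{k^{-1}}$ together with $\pi_{k^{-1}}\mathbf 1_\lambda = \mathbf 1_\lambda$. By Kostant--Rallis and restriction to $K$, both $\mathcal H$ and $\mathcal A(G/P;\mathcal L_\lambda)_K$ are isomorphic as $K$-modules to $\Ind_M^K\mathbf 1$, so on each $K$-isotype they have equal finite dimension and a $K$-linear map between them is bijective iff injective. The analogue of the formula from (iii) (with $\lambda$ replacing $-\lambda$) identifies the kernel of $h\mapsto\pi_{\sm(h)}\mathbf 1_\lambda$ with $\mathcal H_{-\lambda}$. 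The equivalence $\mathcal H_{-\lambda} = 0 \Leftrightarrow e(-\lambda)\neq 0$ then follows by combining (iii) at $-\lambda$ with the injectivity of $\mathfrak V(\lambda)_K\to\mathfrak J(\lambda)$ in Theorem~\ref{thm:4.1}~iii (so $\bar J_{-\lambda} = J_{-\lambda}$ iff $\mathcal H_{-\lambda} = 0$ iff $\operatorname{Im}\mathcal P_{-\lambda} = \mathcal A(G/K;\mathcal M_{-\lambda})$) with Theorem~\ref{thm:4.2}~i (surjectivity of $\mathcal P_{-\lambda}$ equivalent to $e(-\lambda)\neq 0$). The main obstacle throughout is the careful sign-bookkeeping in (ii) and in the pairing identity of (iii); the remaining steps are structural.
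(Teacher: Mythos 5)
Your proof is correct, and its backbone --- the Iwasawa--decomposition computation behind \eqref{eq:4.23}, the identification of $\mathcal H_\lambda$ with the kernel of $h\mapsto\pi_{\sm(h)}{\mathbf 1}_{-\lambda}$, and the duality with $\operatorname{Ker}\mathcal P_\lambda$ through the pairing $\langle\,\cdot\,,\,\cdot\,\rangle_\lambda$ --- is the same as the paper's, which however compresses part (ii) into one sentence and phrases part (iii) as a $K$-multiplicity count ($[\operatorname{Ker}\mathcal P_\lambda:\delta]=[\mathcal H_\lambda:\delta^*]$ combined with Theorem~\ref{thm:4.2}) rather than through your identity $H\bigl((\operatorname{Im}\mathcal P_\lambda)_K\bigr)=\mathcal H_\lambda$ fed into the bijections of Theorem~\ref{thm:4.1}~iii; these are two equivalent faces of the same argument. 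The one genuinely different choice is in part (i): for the implication that bijectivity of \eqref{eq:4.22} forces $e(-\lambda)\ne0$ the paper invokes Kostant's cyclicity criterion for ${\mathbf 1}_\lambda$, whereas you derive the equivalence $\mathcal H_{-\lambda}=0\Leftrightarrow e(-\lambda)\ne0$ internally from part (iii) at parameter $-\lambda$ together with the injectivity of $\mathfrak J(\lambda)\to\mathfrak V(\lambda)$; this buys self-containedness, but note that the step ``$\operatorname{Im}\mathcal P_{-\lambda}=\mathcal A(G/K;\mathcal M_{-\lambda})\Rightarrow e(-\lambda)\ne0$'' is not literally Theorem~\ref{thm:4.2}~i (which concerns being a topological isomorphism): you should add that surjectivity forces injectivity on each finite-dimensional $K$-isotypic component (the multiplicities of domain and target agree by Theorem~\ref{thm:4.2}~ii) and then quote Remark~\ref{rem:4.3}~i. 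Finally, like the paper, you use without proof that $\operatorname{Im}\mathcal P_\lambda$ is a closed subspace of $C^\infty(G)$ so that it belongs to $\mathfrak V(\lambda)$; you at least flag this hypothesis explicitly, which is clearer than the paper's appeal to ``Theorem~\ref{thm:4.1} and Remark~\ref{rem:4.3}~i)''.
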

\begin{pf}
Since ${\mathbf 1}_\lambda$ is $K$-invariant, the map \eqref{eq:4.22} is 
$K$-equivariant.
Moreover for $h\in\mathcal H$, the condition $\pi_{\sm(h)}{\mathbf 1}_\lambda=0$ is 
equivalent to $(\pi_{\sm(\Ad(k)h)}{\mathbf 1}_\lambda)(e) = 0$ for $k\in K$ because
$(\pi_{\sm(h)}{\mathbf 1}_\lambda)(kan) = 
(\pi_{\sm(h)}{\mathbf 1}_\lambda)(k)a^{\rho - \lambda}$.
On the other hand, \eqref{eq:4.23} follows from the definition of $\gamma_\lambda$ and
${\mathbf 1}_{-\lambda}$.

Let $h\in\mathcal H_\lambda$.
Then $\pi_{\sm(h)}{\mathbf 1}_{-\lambda} = 0$ and therefore 
$\sm(h)P_\lambda=0$ and hence it is clear from \eqref{eq:4.14} that 
$\operatorname{Im}\mathcal P_\lambda\subset
 \{u\in\mathcal A(G); Du =0\quad\text{for \ }D\in\bar{J_\lambda}\}.
$

Since $\pi_D{\mathbf 1}_\lambda\in\mathbb C{\mathbf 1}_\lambda$ for 
$D\in U(\mathfrak g)^K$, \eqref{eq:4.1}
shows
\[
 U(\mathfrak g){\mathbf 1}_\lambda = \{\pi_{\sm(h)}{\mathbf 1}_\lambda; 
h\in\mathcal H\},
\]
which is the Harish-Chandra module of the minimal closed $G$-invariant
subspace of $\mathcal A(G/P;\mathcal L_\lambda)$ containing ${\mathbf 1}_\lambda$.
For $\phi\in\mathcal A(G/P;\mathcal L_\lambda)_\delta$, we have
$\mathcal P_\lambda\phi(g) = \ang{\phi}{\pi_g{\mathbf 1}_{-\lambda}}_\lambda$
and therefore the condition $\mathcal P_\lambda\phi=0$ is equivalent
to $\ang\phi{\pi_{\sm(h)}{\mathbf 1}_\lambda}_\lambda=0$ for all 
$h\in\mathcal H_{\delta^*}$.
Hence
$
 [\operatorname{Ker}\mathcal P_\lambda:\delta] = 
 [\mathcal H_\lambda:\delta^*]
$
and Theorem~\ref{thm:4.2} shows
$
 [\operatorname{Im}\mathcal P_\lambda:\delta]
 = [\mathcal A(G/K;\mathcal M_\lambda):\delta] - [\mathcal H_\lambda:\delta^*],
$
which means $\dim(\operatorname{Im}\mathcal P_\lambda)_\delta 
 = \dim\{u\in\mathcal A(G);\ Du=0\quad\text{for }D\in\bar J_\lambda\}_\delta$
and furthermore \eqref{eq:4.26} owing to Theorem~\ref{thm:4.1} and Remark~\ref{rem:4.3}~i).

If $e(-\lambda)\ne0$, the bijectivity of \eqref{eq:4.22} follows from Theorem~\ref{thm:4.2}
because $\mathcal H_\lambda=\{0\}$ by the argument above.
But it follows directly from the result in \cite{Ko} (cf.~\cite{He3}) that 
${\mathbf 1}_\lambda$ is cyclic in $\mathcal A(G/K;\mathcal L_\lambda)$ if and only if 
$e(-\lambda)\ne0$.
\qed
\end{pf}
\begin{rem}[cf.~\cite{Du}]
When $\mathfrak g$ is a complex semisimple Lie algebra, $J_\lambda$ is identified
with the annihilator of the Verma module of $\mathfrak g$ parametrized by $\lambda$
and the Poisson transform gives a bijection between the two sided ideals of the
universal enveloping algebra of $\mathfrak g$ containing $J_\lambda$ 
which are identified with $\mathfrak J(\lambda)$ and the closed 
$G$-invariant subspaces of class 1 principal series of $G$ parametrized
by $\lambda$.
\end{rem}

For a subset $\Xi$ of $\Psi(\mathfrak a_{\mathfrak p})$, let $W_\Xi$ be a 
subgroup of
$W(\mathfrak a_{\mathfrak p})$ generated by reflections with respect to the elements of $\Xi$
and put $P_\Xi = PW_\Xi P$.
Let $P_\Xi = M_\Xi A_\Xi N_\Xi$ be the Langlands 
decomposition of $P_\Xi$ with $A_\Xi\subset A$.
For an element $\mu$ of the complex dual $\mathfrak a_{\Xi,\mathbb C}^*$ of
the Lie algebra $\mathfrak a_\Xi$ of $A_\Xi$,  the space of  hyperfunction
sections of spherical degenerate series is defined by
\begin{multline}\label{eq:4.27}
 \mathcal B(G/P_\Xi;\mathcal L_{\Xi, \mu})
 = \{f\in\mathcal B(G); f(gman) = a^{ \mu-\rho }f(g)\\
   \text{for }(g,m,a,n)\in G\times M_\Xi\times A_\Xi\times N_\Xi\}.
\end{multline}
Then as in the case of the minimal parabolic subgroup, we can define Poisson
transform
\begin{equation}\label{eq:4.28}
\begin{aligned}
 \mathcal P_{\Xi,\mu}: \mathcal B(G/P_\Xi;\mathcal L_{\Xi, \mu})
 \ &\rightarrow\ \mathcal B(G)\\
 \phi\ &\mapsto\ 
 (\mathcal P_{\Xi,\mu}\phi)(g) =
  \ang{\pi_{g^{-1}}\phi}{{\mathbf 1}_{\Xi,-\mu}}_{\Xi,\mu}\\
 &\quad\quad\quad\quad
 =\int_K\phi(gk)dk = \int_K\phi(k)P_{\Xi,\mu}(k^{-1}g)dk.
\end{aligned}
\end{equation}
Here $\ang{\ }{\ }_{\Xi,\mu}$ is the bilinear form of 
$\mathcal B(G/P_\Xi;\mathcal L_{\Xi,\mu})\times
\mathcal A(G/P_\Xi;\mathcal L_{\Xi,-\mu})$ defined on the integral over $K$,
${\mathbf 1}_{\Xi,\mu}(kman) = a^{\mu-\rho}$ for $(k,m,a,n)\in K\times
M_\Xi\times A_\Xi\times N_\Xi$ and
$P_{\Xi,\mu}(g) = {\mathbf 1}_{\Xi,-\mu}(g^{-1})$.

Now we remark
\begin{lem}\label{lem:4.5}
We have naturally
\begin{align}
 \mathcal B(G/P_\Xi;\mathcal L_{\Xi,\mu}) &\subset
 \mathcal B(G/P;\mathcal L_{\mu + \rho(\Xi)}),\label{eq:4.29}\\
 \operatorname{Im}\mathcal P_{\Xi,\mu} &=
 \operatorname{Im}\mathcal P_{\mu - \rho(\Xi)}.\label{eq:4.30}
\end{align}
Here we identify $\mathfrak a_{\mathfrak p}$ with its dual by the Killing form 
and regard $\mu\in\mathfrak{a}_{\Xi,\mathbb{C}}^*$ as 
an element of $\mathfrak{a}_\mathbb{C}^*$ with value zero 
on $\mathfrak{a}_\Xi^{\perp}$, and
define $\rho_\Xi = \rho|_{\mathfrak a_\Xi}$ and 
$\rho(\Xi) = \rho - \rho_\Xi$.
\end{lem}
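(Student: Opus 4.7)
The plan is to establish (4.29) by direct computation from the definition and to deduce (4.30) via an identity of Poisson kernels followed by a simple $K_\Xi$-averaging argument.

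For (4.29): given $f \in \mathcal B(G/P_\Xi; \mathcal L_{\Xi,\mu})$ and $(m,a,n) \in M \times A \times N$, I would decompose $a = a_\Xi a'$ with $a_\Xi \in A_\Xi$ and $a' \in A \cap M_\Xi$, and $n = n' n_\Xi$ with $n' \in N \cap M_\Xi$ and $n_\Xi \in N_\Xi$. Using that $M \subset M_\Xi$, that $M_\Xi$ centralizes $A_\Xi$, and that $M_\Xi$ normalizes $N_\Xi$, one can rewrite $man$ as $m_\Xi a_\Xi n_\Xi'$ with $m_\Xi := ma'n' \in M_\Xi$ and $n_\Xi' \in N_\Xi$, so $f(gman) = a_\Xi^{\mu - \rho} f(g)$. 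Using that $\mu$ vanishes on $\mathfrak a_\Xi^\perp$ while $\rho(\Xi) = \rho - \rho_\Xi$ vanishes on $\mathfrak a_\Xi$ and restricts to $\rho$ on $\mathfrak a_\Xi^\perp$, a short computation then yields $a_\Xi^{\mu - \rho} = a^{(\mu + \rho(\Xi)) - \rho}$, which is exactly the transformation rule for $\mathcal B(G/P; \mathcal L_{\mu + \rho(\Xi)})$.

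The key observation for (4.30) is that the Poisson kernels coincide: $P_{\Xi,\mu} = P_{\mu - \rho(\Xi)}$. Indeed, both ${\mathbf 1}_{\Xi,-\mu}$ and ${\mathbf 1}_{-\mu+\rho(\Xi)}$ are left $K$-invariant with value $1$ at $e$, and by (4.29) they satisfy the same transformation rule under right multiplication by $MAN$, so they agree on all of $G$; passing to $g^{-1}$ gives the equality of kernels. Consequently \eqref{eq:4.14} and \eqref{eq:4.28} become the same integral formula $\phi \mapsto \int_K \phi(k) P_{\mu - \rho(\Xi)}(k^{-1}g)\,dk$, applied to sections whose $K$-restrictions lie in $\mathcal B(K/K_\Xi)$ and $\mathcal B(K/M)$ respectively, where $K_\Xi := K \cap M_\Xi \supset M$. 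The inclusion $\operatorname{Im}\mathcal P_{\Xi,\mu} \subset \operatorname{Im}\mathcal P_{\mu - \rho(\Xi)}$ then follows because every $\phi \in \mathcal B(G/P_\Xi; \mathcal L_{\Xi,\mu})$ restricts to a right-$M$-invariant function on $K$ that extends uniquely via the minimal-parabolic rule to some $\psi \in \mathcal B(G/P; \mathcal L_{\mu - \rho(\Xi)})$ agreeing with $\phi$ on $K$. For the reverse inclusion I would exploit that $P_{\Xi,\mu}$ is left $K_\Xi$-invariant, which follows because the character defining ${\mathbf 1}_{\Xi,-\mu}$ is trivial on $M_\Xi \supset K_\Xi$; then given $\psi \in \mathcal B(G/P; \mathcal L_{\mu-\rho(\Xi)})$, the averaged function $k \mapsto \int_{K_\Xi} \psi(kh)\,dh$ lies in $\mathcal B(K/K_\Xi)$ and extends to some $\phi \in \mathcal B(G/P_\Xi; \mathcal L_{\Xi, \mu})$. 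Interchanging integrations, substituting $k \mapsto kh^{-1}$, and invoking left $K_\Xi$-invariance of the kernel give $\mathcal P_{\Xi,\mu}\phi = \mathcal P_{\mu - \rho(\Xi)}\psi$, completing the reverse inclusion.

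The main technical difficulty is the group-theoretic bookkeeping in (4.29): writing $man$ in the form $m_\Xi a_\Xi n_\Xi'$ and correctly tracking the decomposition $a_\Xi^{\mu - \rho} = a^{(\mu + \rho(\Xi)) - \rho}$ relative to $\mathfrak a_\mathfrak p = \mathfrak a_\Xi \oplus \mathfrak a_\Xi^\perp$. Once (4.29) and the kernel identity $P_{\Xi,\mu} = P_{\mu - \rho(\Xi)}$ are in hand, (4.30) reduces to the formal averaging manipulation above.
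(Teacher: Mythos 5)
Your proposal is correct and follows essentially the same route as the paper's (much terser) proof: the inclusion \eqref{eq:4.29} read off from the definitions, the resulting kernel identity ${\mathbf 1}_{\Xi,-\mu}={\mathbf 1}_{-(\mu-\rho(\Xi))}$, and the left $M_\Xi\cap K$-invariance of $P_{\Xi,\mu}$ together with the identifications $\mathcal B(G/P_\Xi;\mathcal L_{\Xi,\mu})|_K=\mathcal B(K/M_\Xi\cap K)$ and $\mathcal B(G/P;\mathcal L_{\mu-\rho(\Xi)})|_K=\mathcal B(K/M)$ to get both inclusions in \eqref{eq:4.30}. You have merely supplied the group-theoretic bookkeeping and the averaging argument that the paper leaves implicit.
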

\begin{pf}
The inclusion \eqref{eq:4.29} is clear from \eqref{eq:4.12} and \eqref{eq:4.27}, which implies
${\mathbf 1}_{\Xi,-\mu} = {\mathbf 1}_{-(\mu-\rho(\Xi))}$ because 
$\mathcal A(G/P_\Xi,\mathcal L_{\Xi,-\mu})\subset\mathcal A(G/P;\mathcal L_{-\mu+\rho(\Xi)})$.
Since $P_{\Xi,\mu}$ is left $M_\Xi$-invariant and
\[
 \mathcal B(G/P_\Xi;\mathcal L_{\Xi,\mu})|_K = \mathcal B(K/M_\Xi\cap K)\text{ and }
 \mathcal B(G/P;\mathcal L_{\mu-\rho(\Xi)})|_K = \mathcal B(K/M),
\]
we have \eqref{eq:4.30} from \eqref{eq:4.14} and \eqref{eq:4.18}.
\qed
\end{pf}

\begin{cor}\label{cor:4.6}
{\rm i)}
$\mathcal P_{\Xi,\mu}$ is injective if
$e(\mu + \rho(\Xi))\ne0$.
In particular, the Poisson transform 
$\mathcal P_{\rho_\Xi}:\mathcal B(G/P_\Xi)\rightarrow\mathcal A(G/K;\mathcal M_\rho)$
is injective.

{\rm ii)}
If $e(-\mu+\rho(\Xi))e(\mu+\rho(\Xi))\ne0$, then
$\mathcal B(G/P_\Xi;\mathcal L_{\Xi,\mu})$ is irreducible.
\end{cor}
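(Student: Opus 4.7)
\textbf{Proof proposal for Corollary~\ref{cor:4.6}.}

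For part i), the plan is to leverage Lemma~\ref{lem:4.5}. The embedding $\mathcal B(G/P_\Xi;\mathcal L_{\Xi,\mu})\subset \mathcal B(G/P;\mathcal L_{\mu+\rho(\Xi)})$ is compatible with the common integral formula $\phi\mapsto\int_K\phi(gk)\,dk$, so $\mathcal P_{\Xi,\mu}$ is simply the restriction of the ordinary Poisson transform $\mathcal P_{\mu+\rho(\Xi)}$. Theorem~\ref{thm:4.2}.i (equivalently, Remark~\ref{rem:4.3}.i, citing \cite{He3}) asserts that $\mathcal P_{\mu+\rho(\Xi)}$ is injective whenever $e(\mu+\rho(\Xi))\ne 0$, which immediately transfers to $\mathcal P_{\Xi,\mu}$. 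For the specialization $\mu=\rho_\Xi$, one has $\mu+\rho(\Xi)=\rho$, and $e(\rho)\ne 0$ because every argument of every gamma factor in \eqref{eq:4.15} is a positive real. The target identifies with $\mathcal A(G/K;\mathcal M_\rho)$ because $\mathcal M_\lambda$ depends only on the $W(\mathfrak a_\mathfrak p)$-orbit of $\lambda$ and $2\rho_\Xi-\rho=w_\Xi\rho$ for the longest element $w_\Xi$ of the parabolic subgroup $W_\Xi\subset W(\mathfrak a_\mathfrak p)$.

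For part ii), the plan splits into two pieces: first establish cyclicity of the spherical vector in both $\mathcal B(G/P_\Xi;\mathcal L_{\Xi,\mu})$ and in the dual space $\mathcal A(G/P_\Xi;\mathcal L_{\Xi,-\mu})$, then combine them via $K$-averaging. By part i) and Lemma~\ref{lem:4.5}, under $e(\mu+\rho(\Xi))\ne 0$ we have a $G$-isomorphism
\[
\mathcal P_{\Xi,\mu}:\mathcal B(G/P_\Xi;\mathcal L_{\Xi,\mu})\xrightarrow{\ \sim\ }\operatorname{Im}\mathcal P_{\mu-\rho(\Xi)}.
\]
The hypothesis $e(-\mu+\rho(\Xi))\ne 0$ means, via Theorem~\ref{thm:4.4}.i applied at $\lambda=\mu-\rho(\Xi)$, that $\mathbf 1_{\mu-\rho(\Xi)}$ is $U(\mathfrak g)$-cyclic in $\mathcal B(G/P;\mathcal L_{\mu-\rho(\Xi)})$. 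Applying $\mathcal P_{\mu-\rho(\Xi)}$ and pulling back through the displayed isomorphism, $\mathbf 1_{\Xi,\mu}$ becomes cyclic in $\mathcal B(G/P_\Xi;\mathcal L_{\Xi,\mu})$. The hypotheses are symmetric under $\mu\leftrightarrow-\mu$, so the same argument yields cyclicity of $\mathbf 1_{\Xi,-\mu}$ at the $(\mathfrak g,K)$-level in $\mathcal B(G/P_\Xi;\mathcal L_{\Xi,-\mu})$, hence topologically in $\mathcal A(G/P_\Xi;\mathcal L_{\Xi,-\mu})$ since the $K$-finite analytic sections are dense.

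Now let $V\subset\mathcal B(G/P_\Xi;\mathcal L_{\Xi,\mu})$ be a non-zero closed $G$-invariant subspace. Its annihilator $V^\perp\subset \mathcal A(G/P_\Xi;\mathcal L_{\Xi,-\mu})$ under the $G$-invariant pairing $\langle\,\cdot\,,\,\cdot\,\rangle_{\Xi,\mu}$ is a proper closed $G$-invariant subspace (by non-degeneracy of the pairing on the $K$-restrictions). Cyclicity of $\mathbf 1_{\Xi,-\mu}$ forces $\mathbf 1_{\Xi,-\mu}\notin V^\perp$, so some $v\in V$ satisfies $\int_K v(k)\,dk=\langle v,\mathbf 1_{\Xi,-\mu}\rangle_{\Xi,\mu}\ne 0$. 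The $K$-average $\bar v=\int_K\pi_k v\,dk\in V$ is $K$-fixed with $\bar v(e)=\int_K v(k)\,dk\ne 0$; since the $K$-fixed line of $\mathcal B(G/P_\Xi;\mathcal L_{\Xi,\mu})$ is spanned by $\mathbf 1_{\Xi,\mu}$, we obtain $\mathbf 1_{\Xi,\mu}\in V$, and the cyclicity established above then forces $V=\mathcal B(G/P_\Xi;\mathcal L_{\Xi,\mu})$.

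The main obstacle is the apparent asymmetry in Lemma~\ref{lem:4.5} --- the embedding lives at parameter $\mu+\rho(\Xi)$ while the Poisson image is identified at $\mu-\rho(\Xi)$ --- and carefully passing between the $(\mathfrak g,K)$-level cyclicity produced by Theorem~\ref{thm:4.4} and the topological cyclicity needed to annihilate $V^\perp$ in the pairing argument.
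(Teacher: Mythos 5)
Your proof is correct, and its overall architecture is the same as the paper's: part i) by viewing $\mathcal P_{\Xi,\mu}$ as the restriction of $\mathcal P_{\mu+\rho(\Xi)}$ to the subspace $\mathcal B(G/P_\Xi;\mathcal L_{\Xi,\mu})\subset\mathcal B(G/P;\mathcal L_{\mu+\rho(\Xi)})$ of Lemma~\ref{lem:4.5} and invoking Theorem~\ref{thm:4.2}~i); part ii) by combining ``every nonzero closed $G$-invariant subspace contains ${\mathbf 1}_{\Xi,\mu}$'' with ``${\mathbf 1}_{\Xi,\mu}$ is cyclic.'' The paper packages both halves into the single chain of equivalences \eqref{eq:4.31}$\Leftrightarrow$\eqref{eq:4.32}$\Leftrightarrow$\eqref{eq:4.33}, read once at $\mu$ and once at $-\mu$: in particular, injectivity of $\mathcal P_{\Xi,-\mu}$ (guaranteed by $e(-\mu+\rho(\Xi))\ne0$ via part i)) dualizes directly, through the pairing $\ang{\ }{\ }_{\Xi,-\mu}$ and \eqref{eq:4.28}, to cyclicity of ${\mathbf 1}_{\Xi,\mu}$. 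You instead route the cyclicity through Theorem~\ref{thm:4.4}~i) at $\lambda=\mu-\rho(\Xi)$ together with the bijection $\mathcal P_{\Xi,\mu}$ onto $\operatorname{Im}\mathcal P_{\mu-\rho(\Xi)}$. This works, but it leaves implicit one identification you should record --- that $\mathcal P_{\Xi,\mu}{\mathbf 1}_{\Xi,\mu}$ and $\mathcal P_{\mu-\rho(\Xi)}{\mathbf 1}_{\mu-\rho(\Xi)}$ are the \emph{same} spherical function (e.g.\ because $w_\Xi(\mu+\rho(\Xi))=\mu-\rho(\Xi)$, or because the $K$-fixed part of the image is one-dimensional), so that pulling back really lands on ${\mathbf 1}_{\Xi,\mu}$ --- and it consumes the hypothesis $e(\mu+\rho(\Xi))\ne0$ a second time, whereas the paper's dualization yields cyclicity from $e(-\mu+\rho(\Xi))\ne0$ alone. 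Finally, in the ``in particular'' clause of part i) your detour through $2\rho_\Xi-\rho=w_\Xi\rho$ is correct but unnecessary: since $\mathcal P_{\Xi,\rho_\Xi}$ is the restriction of $\mathcal P_{\rho_\Xi+\rho(\Xi)}=\mathcal P_\rho$, its image lies in $\mathcal A(G/K;\mathcal M_\rho)$ without any Weyl-orbit argument.
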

\begin{pf}
The claim i) is a direct consequence of Theorem~\ref{thm:4.2}~i) and 
Lemma~\ref{lem:4.5}.

The $K$-invariant bilinear form $\ang{\ }{\ }_{\Xi,\mu}$ and \eqref{eq:4.28} show
that the following statements are equivalent:
\begin{align}
 &\mathcal P_{\Xi,\mu}\text{ is injective.}\label{eq:4.31}\\
 &{\mathbf 1}_{\Xi,-\mu}\text{ is cyclic in }\mathcal B(G/P_\Xi;\mathcal L_{\Xi,-\mu}).
 \label{eq:4.32}\\
 &\text{Any non-zero closed $G$-invariant subspace of}\label{eq:4.33}\\
 &\mathcal B(G/P_\Xi;\mathcal L_{\Xi,\mu})
 \text{ contains }{\mathbf 1}_{\Xi,\mu}.\notag
\end{align}
Hence ii) is clear.
\qed
\end{pf}

\begin{rem}\label{rem:4.7}
{\rm i)}
The calculation of $\mathcal H_\lambda$ in \eqref{eq:4.24} is equivalent to the 
determination of the kernel of $P^\gamma(\lambda)$ defined by \cite[\S4]{Ko}.

{\rm ii)} Under the notation in Lemma~\ref{lem:4.5}
\begin{equation}\label{eq:4.34}
  [\mathcal H_{\mu+\rho(\Xi)}:\delta] \ge [\Ind_M^K\mathbf1:\delta]
  - [\Ind_{M_\Xi(K)}^K\mathbf1:\delta]
  \quad\text{for }\delta\in\hat K
\end{equation}
and the equality holds if and only if $\mathcal P_{\Xi,\mu}$ is injective.
\end{rem}
\medskip

Most of statements in this section can be generalized to line bundles
or vector bundles over $G/K$.
We will give necessary modifications when we consider homogeneous line 
bundles over a Hermitian symmetric space $G/K$.
For simplicity we suppose $G$ is simple and $\mathfrak k$ have a non-trivial 
center.
Let $K'$ be the analytic subgroup of $G$ with the Lie algebra 
$\mathfrak k'= [\mathfrak k,\mathfrak k]$ and $Y$ be the central element of $\mathfrak k$ 
 normalized so that $\exp tY\in K'_R$ if and only if $\ell\in\mathbb Z$,
where $K'$ is denoted by $K'_R$ when $G$ is a real form of a simply connected
complex Lie group  (cf.~\cite{Sn1}).
Let $\chi_\ell: K\to\mathbb C$ be the one dimensional representation of $K$
defined by $\chi_\ell(k)=1$ if $k\in K'$ and 
$\chi_\ell(\exp tY)=\exp\sqrt{-1}\ell t$.
Then we can define the space of real analytic sections of a homogeneous line 
bundle ${\mathbf E}_\ell$ over $G/K$ associated to the representation 
$\chi_\ell$ of $K$:

\begin{equation}\label{eq:4.35}
 \mathcal A(G/K;{\mathbf E}_\ell) =
 \{u\in\mathcal A(G); u(gk) = \chi_\ell(k)^{-1}u(g)\text{ for }k\in K\}.
\end{equation}
Let $\mathbb D({\mathbf E}_\ell)$ be the algebra of invariant differential operators 
acting on sections of ${\mathbf E}_\ell$.
Defining $\gamma_\ell(D)\in U(\mathfrak a)$ for $D\in U(\mathfrak a)$ so that
\begin{equation}\label{eq:4.36}
 e^\rho\scirc \gamma_\ell(D) \scirc e^{-\rho}
  \in\mathfrak n U(\mathfrak n + \mathfrak a_{\mathfrak p})
     \oplus \textstyle\sum_{X\in\mathfrak k}U(\mathfrak g)(X+\chi_\ell(X)),
\end{equation}
as in the case of $\gamma$, we have the Harish-Chandra isomorphism
\begin{equation}\label{eq:4.37}
  \bar{\gamma_\ell}:\mathbb D({\mathbf E}_\ell)\simeq
  U(\mathfrak g)^K/(U(\mathfrak g)^K\cap
  \textstyle\sum_{X\in\mathfrak k}U(\mathfrak g)(X+\chi_\ell(X)))
  \to U(\mathfrak a_{\mathfrak p})^W
\end{equation}
onto $U(\mathfrak a_{\mathfrak p})^W$.
Hence we put
\begin{equation}\label{eq:4.38}
  J^\ell_\lambda = \textstyle\sum_{X\in\mathfrak k}U(\mathfrak g)(X + \chi_\ell(X))
                  + \textstyle\sum_{p\in\mathcal O(\mathfrak p)^K}
U(\mathfrak{g})(\sm(p)
                       - \gamma^\ell_\lambda(\sm(p)))
\end{equation}
and
\begin{equation}\label{eq:4.39}
  \mathcal A(G/K;\mathcal M^\ell_\lambda) = 
  \{u\in\mathcal A(G); Du=0\quad\text{for }D\in J^\ell_\lambda\}.
\end{equation}
%%%%%%%%%%%%%%%%%%%%%%%%%%%%%%%%%%%%%%%%%%%%%%%%%%%%%%
\begin{thm}\label{thm:4.8}
Replacing $\mathcal M_\lambda$ by $\mathcal M^\ell_\lambda$,
the statements\/ {\rm i), ii)} and\/ {\rm iii)} in\/ {\rm Theorem~\ref{thm:4.1}} are 
valid if the $K$-invariantness of\/ $\ang{\ }{\ }$ is modified by
\begin{equation}\label{eq:4.40}
 \ang{\Ad(k)h}{\chi_{-\ell}(k)\pi_ku} = \ang hp
 \quad\text{for }k\in K, h\in\mathcal H
 \text{ and }u\in\mathcal A(G/K;\mathcal M^\ell_\lambda).
\end{equation}
\end{thm}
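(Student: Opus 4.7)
The strategy is to imitate the proof of Theorem~\ref{thm:4.1} line by line, inserting factors of $\chi_\ell$ at the appropriate places. The only substantive new calculation is the modified equivariance~\eqref{eq:4.40}, which I would verify directly. For $X\in\mathfrak g$, $k\in K$ and $u\in\mathcal A(G/K;\mathcal M^\ell_\lambda)$, the identity $k^{-1}\exp tX=\exp(t\,\Ad(k^{-1})X)\cdot k^{-1}$ combined with the right transformation law $u(gk)=\chi_\ell(k)^{-1}u(g)$ gives
\[
(X\pi_ku)(e)=\left.\tfrac{d}{dt}u\bigl(\exp(t\,\Ad(k^{-1})X)\cdot k^{-1}\bigr)\right|_{t=0}=\chi_\ell(k)(\Ad(k^{-1})X\cdot u)(e).
\]
Extending multiplicatively to $D\in U(\mathfrak g)$ yields $(\Ad(k)D\cdot\pi_ku)(e)=\chi_\ell(k)(Du)(e)$; taking $D=\sm(h)$ and multiplying by $\chi_{-\ell}(k)$ gives~\eqref{eq:4.40}. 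In particular the rule $\pi^\ell_k u:=\chi_{-\ell}(k)\pi_ku$ defines a genuine $K$-representation on $\mathcal A(G/K;\mathcal M^\ell_\lambda)$ for which $\langle\,\cdot\,,\cdot\,\rangle$ is $K$-invariant.

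Next I would establish non-degeneracy. Suppose $u\in\mathcal A(G/K;\mathcal M^\ell_\lambda)$ satisfies $\langle h,u\rangle=0$ for every $h\in\mathcal H$. Using~\eqref{eq:4.1}, any $D\in U(\mathfrak g)$ can be written as a sum of terms $\sm(h_i)\sm(p_i)Y_i$ with $h_i\in\mathcal H$, $p_i\in\mathcal O(\mathfrak p)^K$ and $Y_i\in U(\mathfrak k)$. The inclusions $X+\chi_\ell(X)\in J^\ell_\lambda$ for $X\in\mathfrak k$ force $Y_iu$ to be a scalar multiple of $u$, and $\sm(p_i)u=\gamma^\ell_\lambda(\sm(p_i))u$ by~\eqref{eq:4.38}; combining these with $\langle h_i,u\rangle=(\sm(h_i)u)(e)=0$ gives $(Du)(e)=0$ for every $D$, whence $u=0$ by real analyticity. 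To upgrade this injectivity to non-degeneracy on each $K$-isotypic block, I would invoke the line bundle analog of Theorem~\ref{thm:4.2} due to~\cite{Sn1}, together with the Kostant--Rallis description~\cite{KR} of $\mathcal H$, to match the dimensions of the $\pi^\ell$-isotypic components of $\mathcal A(G/K;\mathcal M^\ell_\lambda)$ with the $K$-isotypic components of $\mathcal H$.

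Part~iii) is purely algebraic and transfers verbatim. The proof of the bijections~\eqref{eq:4.9}--\eqref{eq:4.11} uses only~\eqref{eq:4.1}, the commutation formula $(D\pi_Xu)(e)=-(XDu)(e)$ (which involves no $\chi_\ell$), and the characterization of left ideals $J\supset J^\ell_\lambda$ as $J^\ell_\lambda\oplus\{\sm(h);h\in\mathcal H_J\}$---the line bundle analog of~\eqref{eq:4.21}, which follows from~\eqref{eq:4.1} and~\eqref{eq:4.38} by the same argument. The main obstacle I expect is the $K$-type dimension bookkeeping in the second step, which requires the line bundle Helgason theorem and a careful tracking of how $\chi_\ell|_M$ interacts with Frobenius reciprocity; once this is done, the rest of the proof is routine.
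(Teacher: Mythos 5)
Your verification of \eqref{eq:4.40}, the injectivity argument via \eqref{eq:4.1} and \eqref{eq:4.38}, and the observation that part iii) transfers verbatim all match the paper's (very brief) proof, which simply says that once the $K$-invariance \eqref{eq:4.40} is in place one repeats the proof of Theorem~\ref{thm:4.1} for the pairing $\mathcal A(G/K;\mathcal M^\ell_\lambda)_{\delta\otimes\chi_\ell}\times\mathcal H_{\delta^*}\to\mathbb C$.

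There is, however, a genuine problem in the one step you yourself flag as the main obstacle. You propose to get the $K$-type dimension count by invoking ``the line bundle analog of Theorem~\ref{thm:4.2}''. But the paper states explicitly, immediately after Theorem~\ref{thm:4.9}, that Theorem~\ref{thm:4.2}~(ii) does \emph{not} hold for non-trivial line bundles: there is no Weyl-group element moving $\lambda$ into the dominant chamber while preserving the eigenspace $\mathcal A(G/K;\mathcal M^\ell_\lambda)$, and the bijectivity statement Theorem~\ref{thm:4.9}~(ii) is only available under the genericity hypotheses \eqref{eq:4.44} and $e(\lambda,\ell)\ne0$. So your route establishes the theorem only for such generic $(\lambda,\ell)$, whereas Theorem~\ref{thm:4.8} carries no restriction on $\lambda$. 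The repair is exactly the device the paper uses: as already remarked in the proof of Theorem~\ref{thm:4.1}, the non-degeneracy (and hence i), ii)) follows from the \emph{one-sided} inequality $\dim\mathcal A(G/K;\mathcal M^\ell_\lambda)_{\delta\otimes\chi_\ell}\ge\dim\mathcal H_{\delta^*}$, since the opposite inequality is forced by the injectivity you already proved. This inequality is supplied for all $\lambda$ by the proof of Lemma~8.6 of \cite{Sn1}, which gives $[\mathcal A(G/K;\mathcal M^\ell_\lambda):\delta\otimes\chi_\ell]\ge[\Ind_M^K\chi_\ell|_M:\delta\otimes\chi_\ell]$, and the right-hand side equals $[\Ind_M^K\mathbf 1:\delta]=[\mathcal H:\delta^*]$ after tensoring with $\chi_{-\ell}$ and applying \cite{KR}. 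With that substitution your argument is complete; without it, the non-generic parameters are not covered.
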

%%%
\begin{pf}
Recalling the proof of Theorem~\ref{thm:4.1}, we have only to consider
\begin{equation}\label{eq:4.41}
 \mathcal A(G/K;\mathcal M^\ell_\lambda)_{\delta\otimes\chi_\ell}\times
 \mathcal H_{\delta^*}\ni (u,p)\mapsto\ang pu\in\mathbb C
\end{equation}
because of the $K$-invariance \eqref{eq:4.40}.
Hence if we have 
$\dim A(G/K;\mathcal M^\ell_\lambda)_{\delta\otimes\chi_\ell}\ge\dim
\mathcal H_{\delta^*}$, the same argument as in the proof of Theorem~\ref{thm:4.1}
shows Theorem~\ref{thm:4.8}.

On the other hand, the proof of \cite[Lemma~8.6] {Sn1} says that 
\[
 [A(G/K;\mathcal M^\ell_\lambda),\delta\otimes\chi_\ell]
  \ge[\Ind_M^K \chi_\ell|_M,\delta\otimes\chi_\ell].
\]
Tensoring $\chi_{-\ell}$ to the right hand side, we have
$[\Ind_M^K \chi_\ell|_M,\delta\otimes\chi_\ell] = [\Ind_M^K{\mathbf 1},\delta]$,
which is also equal to $[\mathcal H,\delta^*]$.
\qed
\end{pf}

\bigskip
Put
$\{|\alpha|;\alpha\in\Sigma(\mathfrak a_{\mathfrak p})\} = \{c_1,\ldots,c_N\}$
with $c_1>\cdots>c_N$.
Then $N=1$ or 2 or 3 and we fix $\beta_\nu\in\Sigma(\mathfrak a_{\mathfrak p})^+$
with $|\beta_\nu|=c_\nu$.
Moreover we put
\begin{multline}\label{eq:4.42}
 \mathcal B(G/P;\mathcal L^\ell_\lambda)
 = \{f\in\mathcal B(G);
     f(gman) = \chi_\ell(m)^{-1}a^{\lambda-\rho}f(g)\\
 \text{for }(g,m,a,n)\in G\times M\times A\times N\}
\end{multline}
and
\begin{equation}\label{eq:4.43}
\begin{aligned}
  e_\alpha(\lambda,\ell) &= 
               \left\{
         \Gamma\left(\tfrac{\lambda_\alpha}2
                    +\tfrac{m_{\frac\alpha2}}4
                    +\tfrac{1+\ell}2\right)
         \Gamma\left(\tfrac{\lambda_\alpha}2
                    +\tfrac{m_{\frac\alpha2}}4
                    +\tfrac{1-\ell}2\right)
               \right\}^{-1},\\
 e(\lambda,\ell) &= 
           \prod_{\alpha\in\Sigma(\mathfrak a_{\mathfrak p})^+,\ |\alpha|=|\beta_1|}
             e_\alpha(\lambda,\ell)
           \times
           \prod_{\alpha\in\Sigma(\mathfrak a_{\mathfrak p})^+,\ |\alpha|=|\beta_2|}
             e_\alpha(\lambda),\\
 c(\lambda,\ell) &=
           C e(\lambda,\ell)
           \prod_{k=1}^2 \prod_{\alpha\in\Sigma(\mathfrak a_{\mathfrak p})^+,\
             |\alpha|=|\beta_k|}2^{-\frac{\lambda_\alpha}{k}}
           \Gamma\left(\tfrac{\lambda_\alpha}{k}\right),
\end{aligned}
\end{equation}
where $C\in\mathbb R$ is determined by $c(\rho,0)=1$,
$e_\alpha(\lambda)$ is given in (\ref{eq:4.15}) and
$m_{\frac\alpha2}$ may be $0$.
Then the main result in \cite{Sn1} says

\begin{thm}\label{thm:4.9}
{\rm i)}
The Poisson transform
\[
\begin{aligned}
 \mathcal P^\ell_\lambda:
  \mathcal B(G/P;\mathcal L^\ell_\lambda)&\to\mathcal B(G)\\
  \phi&\mapsto
  (\mathcal P^\ell_\lambda\phi)(g) = \int_K \phi(gk)\chi_\ell(k)dk
  =\int_K\phi(k){\mathbf 1}_{-\lambda,-\ell}(g^{-1}k)dk
\end{aligned}
\]
is a $G$-homomorphism and\/ 
$\operatorname{Im}\mathcal P^\ell_\lambda\subset\mathcal A(G/K;\mathcal M^\ell_\lambda)$.
Here the function ${\mathbf 1}_{-\lambda,-\ell}\in\mathcal A(G/P;\mathcal L^{-\ell}_{-\lambda})$ 
is defined by
${\mathbf 1}_{-\lambda,-\ell}(kan) = \chi_\ell(k)a^{-\lambda-\rho}$
for $(k,a,n)\in K\times A\times N$.

{\rm ii)} If
\begin{equation}\label{eq:4.44}
 -2\frac{\ang\lambda\alpha}{\ang\alpha\alpha}
 \notin\{1,2,3,\ldots\}
 \quad\text{for }\alpha\in\Sigma(\mathfrak a_{\mathfrak p})^+
\end{equation}
and $e(\lambda,\ell)\ne 0$, then $\mathcal P^\ell_\lambda$ is a topological
$G$-isomorphism of $\mathcal B(G/P;\mathcal L^\ell_\lambda)$ onto
$\mathcal A(G/K;\mathcal M^\ell_\lambda)$.

{\rm iii)} Suppose $\operatorname{Re}\ang\lambda\alpha>0$ for 
$\alpha\in\Sigma(\mathfrak a_\mathfrak p)^+$.
Then the following statements are equivalent:
\begin{align}
 &c(\lambda,\ell)\ne 0.\label{eq:4.45}\\
 &\mathcal P^\ell_\lambda\text{ is injective.}\label{eq:4.46}\\
 &\operatorname{Im}\mathcal P^\ell_\lambda= \mathcal A(G/K;\mathcal M^\ell_\lambda).
\end{align}
\end{thm}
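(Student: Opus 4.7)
\medskip\noindent\textbf{Proof plan.}
For part (i), the $G$-equivariance of $\mathcal{P}^\ell_\lambda$ is immediate from the invariance of the Haar measure on $K$, exactly as in (\ref{eq:4.14}). To see $\operatorname{Im}\mathcal{P}^\ell_\lambda\subset \mathcal{A}(G/K;\mathcal{M}^\ell_\lambda)$, I would observe that $P^\ell_\lambda(g):={\mathbf 1}_{-\lambda,-\ell}(g^{-1})$ is annihilated by every generator of $J^\ell_\lambda$: the relations $X+\chi_\ell(X)$ for $X\in\mathfrak{k}$ act as zero by the right $K$-transformation law of $P^\ell_\lambda$, and $\operatorname{sym}(p)-\gamma^\ell_\lambda(\operatorname{sym}(p))$ acts as zero by the very definition (\ref{eq:4.36}) of $\gamma_\ell$, which is the line-bundle analogue of (\ref{eq:4.23}). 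Since each $D\in J^\ell_\lambda$ is left-invariant and therefore commutes with left translation, passing $D$ through the integral $\int_K\phi(k)P^\ell_\lambda(k^{-1}g)\,dk$ kills $\mathcal{P}^\ell_\lambda\phi$.

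For part (ii), the plan is to mirror the proof of Theorem~\ref{thm:4.2} in the $\chi_\ell$-twisted setting, working one $K$-type at a time. By Frobenius reciprocity $\mathcal{B}(G/P;\mathcal{L}^\ell_\lambda)|_K\simeq \operatorname{Ind}_M^K\chi_\ell|_M$, while Theorem~\ref{thm:4.8} identifies the $(\delta\otimes\chi_\ell)$-isotypic component of $\mathcal{A}(G/K;\mathcal{M}^\ell_\lambda)$ with a space of dimension $[\operatorname{Ind}_M^K{\mathbf 1}:\delta]=\dim\mathcal{H}_{\delta^*}$, matching the domain. On each such $K$-type, Schur's lemma reduces $\mathcal{P}^\ell_\lambda$ to a scalar, which I would compute by the line-bundle analogue of Kostant's analysis of $K$-finite vectors in the principal series and match with an explicit product of gamma ratios whose joint non-vanishing is exactly $e(\lambda,\ell)\ne0$. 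The hypothesis (\ref{eq:4.44}) provides the cyclicity needed to upgrade $K$-type-wise injectivity to bijectivity of Harish-Chandra modules; the topological $G$-isomorphism between the hyperfunction and real analytic realizations then follows from the standard minimal/maximal globalization comparison.

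For part (iii), under the positivity hypothesis the integral representation of the line-bundle $c$-function converges, and standard asymptotic analysis shows that the value at infinity of $\mathcal{P}^\ell_\lambda$ applied to the natural $\chi_\ell$-equivariant generator in $\mathcal{B}(G/P;\mathcal{L}^\ell_\lambda)$ is a non-zero multiple of $c(\lambda,\ell)$. Hence this generator lies in the kernel of $\mathcal{P}^\ell_\lambda$ iff $c(\lambda,\ell)=0$, and the cyclicity automatically available from positivity propagates this to all of $\mathcal{B}(G/P;\mathcal{L}^\ell_\lambda)$, giving (\ref{eq:4.45})$\Leftrightarrow$(\ref{eq:4.46}). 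Finally, positivity implies both (\ref{eq:4.44}) and $e(\lambda,\ell)\ne 0$—the auxiliary $\Gamma(\lambda_\alpha/k)$ factors appearing in $c$ but not in $e$ have no zeros in the right half-plane—so (ii) applies and injectivity promotes to surjectivity.

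The main obstacle is the explicit scalar computation underlying (ii): one has to determine how $\operatorname{sym}(h)$ for a $K$-harmonic polynomial $h$ acts on the line-bundle Poisson kernel while carefully tracking the character $\chi_\ell$ on $M$-isotypic pieces inside each $K$-type, and verify that the resulting meromorphic factor is exactly $e_\alpha(\lambda,\ell)$ of (\ref{eq:4.43})—including the asymmetric $\frac{1\pm\ell}{2}$ shifts on the long-root factors that are characteristic of the Hermitian symmetric case. This bookkeeping is essentially the substance of \cite{Sn1}, and any self-contained proof would have to reproduce it.
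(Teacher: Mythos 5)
First, a structural point: the paper itself offers no proof of Theorem~\ref{thm:4.9}; it is quoted as ``the main result in \cite{Sn1}'', so there is nothing in the text to compare your argument against, and you are right that any self-contained proof would have to reproduce \cite{Sn1}. Your part (i) is correct and is the only part one can genuinely verify on the spot. For part (ii), the $K$-type-by-$K$-type scalar matching can give injectivity and an isomorphism of Harish-Chandra modules, but surjectivity onto all of $\mathcal A(G/K;\mathcal M^\ell_\lambda)$ (which contains non-$K$-finite solutions) is the Helgason-conjecture-type statement; in \cite{Sn1} it rests on the boundary value theory for systems with regular singularities of \cite{KO}, \cite{K--}, and that is also where hypothesis \eqref{eq:4.44} actually enters (it rules out integral exponent differences producing logarithmic terms in the boundary expansion). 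Compressing this into ``the standard minimal/maximal globalization comparison'' hides essentially all of the analytic content of the theorem.

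The one outright error is in part (iii): you assert that the positivity hypothesis alone implies $e(\lambda,\ell)\ne 0$. That is false, and if it were true the three conditions in (iii) would all hold automatically and the statement would be vacuous. For a long restricted root $\alpha$ with $m_{\frac\alpha2}=0$, $\lambda_\alpha=1$ and $\ell=2$, the second Gamma argument in \eqref{eq:4.43} is $\tfrac{\lambda_\alpha}2+\tfrac{m_{\alpha/2}}4+\tfrac{1-\ell}2=0$, so $e_\alpha(\lambda,\ell)=0$ even though $\operatorname{Re}\lambda_\alpha>0$; the possible vanishing of the $\ell$-shifted factors inside the positive chamber is exactly the phenomenon the theorem is tracking. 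What positivity does give is that the auxiliary factors $\Gamma(\lambda_\alpha/k)$ are finite and nonzero, hence $c(\lambda,\ell)\ne0\Leftrightarrow e(\lambda,\ell)\ne0$ there; it is this equivalence, together with \eqref{eq:4.44} (which positivity does imply), that lets you feed \eqref{eq:4.45} into part (ii) to obtain \eqref{eq:4.46} and surjectivity. Relatedly, your claim that the $\chi_\ell$-equivariant generator lies in $\operatorname{Ker}\mathcal P^\ell_\lambda$ \emph{iff} $c(\lambda,\ell)=0$ is too quick in one direction: vanishing of the leading asymptotic coefficient of the associated spherical function does not by itself force the function to vanish, and establishing \eqref{eq:4.45}$\Leftarrow$\eqref{eq:4.46} is a genuinely nontrivial part of \cite{Sn1}.
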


Notice that Theorem~\ref{thm:4.2} (ii) does not hold in the case 
of non-trivial line bundles. 

Now we consider the degenerate series.
Let $M_{\Xi,s}^o$ denote the semisimple part of $M_{\Xi}$, namely
$M_{\Xi,s}^o$ is the analytic subgroup of $M_{\Xi}$ with the Lie algebra
$[\mathfrak m_{\Xi},\mathfrak m_{\Xi}]$.
Suppose
\begin{equation}\label{eq:4.48}
 \chi_\ell|_{M_{\Xi,s}^o\cap K} = \mathbf 1.
\end{equation}
Let $Z(M_\Xi)$ be the center of $M_\Xi$.
Then $Z(M_\Xi)\subset M$ and for $\mu\in\mathfrak a_\Xi^*$ 
we can define a one-dimensional representation 
$\tau_{\ell,\mu}$ of $P_\Xi$ by
\begin{equation}\label{eq:4.49}
 \tau_{\Xi,\ell,\mu}(yman) = \chi_\ell(m)a^{\mu-\rho}
 \quad\text{for }(y,m,a,n)\in 
 M_{\Xi,s}^o\times M\times A_\Xi\times N_\Xi.
\end{equation}
Put
\begin{equation}\label{eq:4.50}
\begin{aligned}
 \mathcal B(G/P_\Xi;\mathcal L^\ell_{\Xi,\mu})
  &= \{f\in\mathcal B(G); f(gp)=\tau_{\Xi,\ell,\mu}(p)f(g)
     \quad\text{for }p\in P_\Xi\},\\
 \mathcal A(G/P_\Xi;\mathcal L^\ell_{\Xi,\mu})
  &= \mathcal B(G/P_\Xi;\mathcal L^\ell_{\Xi,\mu}) \cap \mathcal A(G).
\end{aligned}
\end{equation}
and define an element ${\mathbf 1}_{\Xi,\ell,\mu}\in
\mathcal A(G/P_\Xi;\mathcal L^\ell_{\Xi,\mu})$
by
\begin{equation}\label{eq:4.51}
  {\mathbf 1}_{\Xi,\ell,\mu}(kan) = \chi_{-\ell}(k)a^{\mu-\rho}
  \quad\text{for }(k,a,n)\in K\times A\times N.
\end{equation}
Then by the $G$-invariant bilinear form
\begin{equation}\label{eq:4.52}
 \mathcal B(G/P_\Xi;\mathcal L^\ell_{\Xi,\mu}) \times 
 \mathcal A(G/P_\Xi;\mathcal L^{-\ell}_{\Xi,-\mu})
 \ni (\phi,f)\mapsto\ang\phi{f}_{\Xi,\ell,\mu} = \int_K \phi(k)f(k)dk
 \in\mathbb C
\end{equation}
we can define the Poisson transform
\begin{equation}\label{eq:4.53}
\begin{aligned}
 \mathcal P^\ell_{\Xi,\mu}:
   \mathcal B(G/P_\Xi;\mathcal L^\ell_{\Xi,\mu})&\to
   \mathcal A(G/K;\mathcal M^\ell_{\mu + \rho(\Xi)})\\
 \phi&\mapsto
 (P^\ell_{\Xi,\mu}\phi)(g) =
 \ang{\pi_{g^{-1}}\phi}{{\mathbf 1}_{\Xi,-\mu,-\ell}}_{\Xi,\ell,\mu}\\
 &\quad\quad\quad\quad =
 \ang{\phi}{\pi_g{\mathbf 1}_{\Xi,-\mu,-\ell}}_{\Xi,\ell,\mu}.
\end{aligned}
\end{equation}
We note the following lemma which is similarly proved as Lemma~\ref{lem:4.5}.

\begin{lem}\label{lem:4.10}
\begin{align}
 \mathcal B(G/P_\Xi;\mathcal L^\ell_{\Xi,\mu}) &\subset
 \mathcal B(G/P;\mathcal L^\ell_{\mu + \rho(\Xi)}),\label{eq:4.54}\\
 \operatorname{Im}\mathcal P_{\Xi,\mu}^\ell &=
 \operatorname{Im}\mathcal P_{\mu - \rho(\Xi)}^\ell.\label{eq:4.55}
\end{align}
\end{lem}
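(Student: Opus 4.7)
The plan is to adapt the proof of Lemma~\ref{lem:4.5} to the line-bundle setting, tracking the character $\chi_\ell$ carefully and invoking hypothesis \eqref{eq:4.48} at the appropriate points.

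For the inclusion \eqref{eq:4.54}, I would verify the transformation rule of \eqref{eq:4.42} directly for a section $f\in\mathcal B(G/P_\Xi;\mathcal L^\ell_{\Xi,\mu})$. Using the decomposition $\mathfrak a_{\mathfrak p}=\mathfrak a_\Xi^{\perp}\oplus\mathfrak a_\Xi$ one writes $A=A_\Xi^{\perp}A_\Xi$ with $A_\Xi^{\perp}\subset M_\Xi$, and similarly $N=(N\cap M_\Xi)\,N_\Xi$. Any $man\in P$ can then be rewritten as $m'\cdot a_\Xi\cdot n_\Xi$ with $m'\in M\cdot A_\Xi^{\perp}\cdot(N\cap M_\Xi)\subset M_\Xi$, and applying \eqref{eq:4.49} yields the eigenvalue $\chi_\ell(m)^{-1}a^{(\mu+\rho(\Xi))-\rho}$, which is precisely the transformation rule \eqref{eq:4.42} with $\lambda=\mu+\rho(\Xi)$. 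The condition \eqref{eq:4.48} is exactly what makes $\chi_\ell$ well-defined on the $M_{\Xi,s}^o$-part, so that $\tau_{\Xi,\ell,\mu}$ is consistent with the ambient character on $M$.

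For the equality of images \eqref{eq:4.55}, the key observation is that the two Poisson kernels coincide as functions on $G$: comparing \eqref{eq:4.51} to the formula for ${\mathbf 1}_{-\lambda,-\ell}$ in Theorem~\ref{thm:4.9}(i), the inclusion of dual spaces identifies ${\mathbf 1}_{\Xi,-\mu,-\ell}$ with ${\mathbf 1}_{-(\mu-\rho(\Xi)),-\ell}$, so $P^\ell_{\Xi,\mu}=P^\ell_{\mu-\rho(\Xi)}$. Hence $\mathcal P^\ell_{\Xi,\mu}$ is literally the restriction of $\mathcal P^\ell_{\mu-\rho(\Xi)}$ to the subspace $\mathcal B(G/P_\Xi;\mathcal L^\ell_{\Xi,\mu})\subset\mathcal B(G/P;\mathcal L^\ell_{\mu+\rho(\Xi)})$, which gives one inclusion. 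For the reverse inclusion, the left $M_\Xi$-invariance of ${\mathbf 1}_{\Xi,-\mu,-\ell}$ (again relying on \eqref{eq:4.48}) shows that $P^\ell_{\mu-\rho(\Xi)}(k^{-1}g)$ is right $(M_\Xi\cap K)$-invariant in $k$; so the integral in \eqref{eq:4.53} depends only on the right $(M_\Xi\cap K)$-average of $\phi\in\mathcal B(G/P;\mathcal L^\ell_{\mu+\rho(\Xi)})|_K=\mathcal B(K/M;\chi_\ell)$, and such averages fill out $\mathcal B(G/P_\Xi;\mathcal L^\ell_{\Xi,\mu})|_K=\mathcal B(K/M_\Xi\cap K;\chi_\ell)$.

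The main obstacle is bookkeeping: keeping the sign conventions for the character straight between \eqref{eq:4.42}, \eqref{eq:4.49}, and \eqref{eq:4.51}, and verifying at each step that \eqref{eq:4.48} is the precise hypothesis needed so that the line-bundle data extend compatibly from $P$ to $P_\Xi$. Once this is tracked, the argument proceeds exactly as in the scalar case of Lemma~\ref{lem:4.5}.
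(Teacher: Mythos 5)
Your proposal is correct and follows essentially the same route as the paper, which simply observes that Lemma~\ref{lem:4.10} is proved like Lemma~\ref{lem:4.5}: the inclusion is checked from the definitions of the transformation laws, the two Poisson kernels are identified, and the image equality follows because the integral over $K$ factors through the $(M_\Xi\cap K)$-average. The only nitpick is that ${\mathbf 1}_{\Xi,-\mu,-\ell}$ is left $(M_\Xi\cap K)$-\emph{equivariant} via $\chi_\ell$ rather than invariant (condition \eqref{eq:4.48} only trivializes $\chi_\ell$ on $M_{\Xi,s}^o\cap K$), but your final step with the twisted average $\mathcal B(K/M_\Xi\cap K;\chi_\ell)$ already accounts for this.
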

%%%%%

%\subsection{Harmonic functions}
%Under the notation in \S\ref{sec:Rep} 
Lastly in this section we examine the space of harmonic
functions on $G/K$:
\[
  \mathcal H(G/K):=\mathcal A(G/K;\mathcal M_\rho).
\]
Let $X_\Xi$ be the Satake compactification of $G/K$ where the boundary $G/P_\Xi$
appears.
For $a\in A$ we denote by $a\to\infty$ if $\alpha(\log a)\to\infty$
for any $\alpha\in\Psi(\mathfrak a_{\mathfrak p})$.
Then for any $k\in K$ the point $kaK\in G/K\subset X_\Xi$ 
converges to a point in $G/P_\Xi\subset X_\Xi$.
The Poisson transform $\mathcal P_\rho$ defines a bijective homomorphism 
of $\mathcal B(G/P)$ onto $\mathcal H(G/K)$.
Let $\mathcal C(G/P_\Xi)$ be the space of continuous functions on $G/P_\Xi$. 
Note that $\mathcal C(G/P_\Xi)\simeq\mathcal C(K/M_\Xi)
\subset \mathcal C(G/P)\simeq C(K/M)$.

\begin{prop}
Let $\mathcal F$ be $\mathcal C$ or $\mathcal C^m$ or $\mathcal C^\infty$ or 
$\mathcal D'$ or $\mathcal B$.
Note that $\mathcal F(G/P_\Xi)\simeq\mathcal F(K/K\cap M_\Xi)
\subset \mathcal F(G/P)\simeq \mathcal F(K/M)$.
Then we have
\begin{equation}
 \begin{split}
  &\mathcal P_{\Xi,\rho}\mathcal F(G/P_\Xi)\\
  &\ =\{u\in\mathcal H(G/K)\,;\, u(ka)
  \text{ uniformly converges to a continuous function}\\
  &\qquad\text{on $K/K\cap M_{\Xi}$ in the strong topology of $\mathcal F(K)$
   when $a\to\infty$}\}.
 \end{split}
\end{equation}
\end{prop}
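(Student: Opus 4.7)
The plan is to reduce to the minimal-parabolic setup via Lemma~\ref{lem:4.5} and then analyze the asymptotic behaviour of the Poisson integral along $ka$ as $a\to\infty$ in the positive Weyl chamber. Since the parameter $\mu=\rho_\Xi$ makes the character $\tau_{\Xi,\rho_\Xi}$ on $P_\Xi$ trivial, $\mathcal B(G/P_\Xi;\mathcal L_{\Xi,\rho_\Xi})=\mathcal B(G/P_\Xi)$ consists of honest right $P_\Xi$-invariant functions, and by Lemma~\ref{lem:4.5} they embed into $\mathcal B(G/P;\mathcal L_\rho)=\mathcal B(G/P)$ as functions that factor through $G/P\to G/P_\Xi$. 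The Poisson integral then reduces to
\[
 u(g) = (\mathcal P_{\Xi,\rho}\phi)(g) = \int_K \phi(gk')\,dk',
\]
and the Iwasawa decomposition $ak'=k(ak')a(ak')n(ak')$ together with the right $AN$-invariance of $\phi$ yields
\[
 u(ka) = \int_K \phi\bigl(k\cdot k(ak')\bigr)\,dk'.
\]

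For the inclusion $\subseteq$ I would take $\phi\in\mathcal F(G/P_\Xi)$ and use the classical fact that, as $a\to\infty$ in the positive chamber, the push-forward of Haar measure on $K$ under $k'\mapsto k(ak')$ concentrates at the identity coset in $K/(K\cap M_\Xi)$: almost every $k'$ lies in the big Bruhat cell $\bar N_\Xi P_\Xi$, and there $k(ak')$ tends to the identity modulo $K\cap M_\Xi$. The Poisson kernel $P_{\Xi,\rho}((k')^{-1}a)$ thus acts as an approximate identity on $K/(K\cap M_\Xi)$, and convolving $\phi$ against it yields $u(ka)\to\phi(k)$ uniformly in the strong topology of $\mathcal F(K)$, for each class $\mathcal F\in\{\mathcal C,\mathcal C^m,\mathcal C^\infty,\mathcal D',\mathcal B\}$; the estimates in the $\mathcal C^m$ case are obtained directly on derivatives, and for $\mathcal D'$ and $\mathcal B$ they are dualized against test functions.

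For the inclusion $\supseteq$ I would let $u\in\mathcal H(G/K)$ satisfy the asymptotic condition and identify the limit $\phi_0$ with an element of $\mathcal F(G/P_\Xi)\subset\mathcal B(G/P)$. By Theorem~\ref{thm:4.2} write $u=\mathcal P_\rho\psi$ with a unique $\psi\in\mathcal B(G/P)$. Applying the already-proved inclusion $\subseteq$ to $\mathcal P_\rho\phi_0=\mathcal P_{\Xi,\rho}\phi_0$ (Lemma~\ref{lem:4.5}), its asymptotic limit is also $\phi_0$; hence $v:=u-\mathcal P_\rho\phi_0=\mathcal P_\rho(\psi-\phi_0)$ is harmonic with asymptotic limit $0$ in $\mathcal F(K)$. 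The injectivity of $\mathcal P_\rho$ (Theorem~\ref{thm:4.2}), together with a Fatou-type recovery theorem identifying the hyperfunction boundary value on $G/P$ with the asymptotic limit along $ka$, then forces $\psi=\phi_0$, giving $u=\mathcal P_{\Xi,\rho}\phi_0$ with $\phi_0\in\mathcal F(G/P_\Xi)$.

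The main obstacle is the quantitative approximate-identity statement for the Poisson kernel in each class $\mathcal F$ and, for the distributional and hyperfunction cases, the Fatou-type recovery of the boundary data from the asymptotic $a\to\infty$. Both ultimately depend on the boundary value theory of Kashiwara--Oshima \cite{KO} for systems of differential equations with regular singularities, applied along the stratum $G/P_\Xi$ of the Satake compactification $X_\Xi$.
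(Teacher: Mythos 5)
Your argument follows essentially the same route as the paper's own proof: both directions rest on the Fatou-type identification of the boundary value $\beta u$ with $\lim_{a\to\infty}u(ka)$ (the paper cites \cite{OS1} and \cite{BOS} for this, where you invoke the Kashiwara--Oshima theory \cite{KO} underlying those results), on the bijectivity of $\mathcal P_\rho$ at $\lambda=\rho$, and on the observation via Lemma~\ref{lem:4.5} that $\mathcal P_\rho$ and $\mathcal P_{\Xi,\rho}$ agree on $\mathcal F(G/P_\Xi)\subset\mathcal F(G/P)$. Your write-up merely expands the approximate-identity mechanism and the subtraction step that the paper compresses into a citation, so the two proofs coincide in substance.
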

This is shown as follows.
Suppose $u$ is a function in the above left hand side.
Then the boundary value $\beta u$ of $u$ equals $\lim_{a\to\infty}u(ka)$ 
(cf.~for example, \cite{OS1} and \cite{BOS}) and $\mathcal P_\rho\beta u=u$.
The assumption implies $\beta u\in\mathcal F(G/P_\Xi)$ and hence  
$\mathcal P_\rho\beta u=\mathcal P_{\Xi,\rho}\beta u$.
Moreover the Poisson transform of the function 
$f\in\mathcal F(G/P_\Xi)\subset \mathcal F(G/P)$
has a limit $\lim_{a\to \infty}(\mathcal P_\rho u)(ka)=u(ka)$
in the strong topology of $\mathcal F(K)$.

In particular if $u\in\mathcal H(G/K)$ can be continuously extended to the 
boundary $G/P_\Xi$ in $X_\Xi$, 
$u$ satisfies many differential equations corresponding to $\Xi$ because it is
in the image of $\mathcal P_{\Xi,\rho}$.

These statements can be extended for general eigenspaces
$\mathcal A(G/K;\mathcal M_\lambda)$ by using weighted boundary values 
given in \cite[Theorem~3.2]{BOS}.
%%%%%%%%%%%%%%%%%%%%%%%%%%%%%%%%%%%%%%%%%%%%%%%%%%%%%%%%%%%%%%%%%%%%%
\section{Construction of the Hua type operators}
\subsection{Two sided ideals}
We want to study a good generator system characterizing the image of the 
Poisson transform $\mathcal P_{\Xi,\mu}^\ell$ given by \eqref{eq:4.53}.

The image of the Poisson transform $\mathcal P_\lambda$ given by
\eqref{eq:4.14} is characterized as a simultaneous eigenspace of the invariant 
differential operators $\mathbb D(G/K)$ on the symmetric space $G/K$ when 
the Poisson transform is injective.
The image is also a simultaneous eigenspace of the center $Z(\mathfrak g)$ of 
$U(\mathfrak g)$ with eigenvalues corresponding to the infinitesimal character
and in most cases the system of the equations on $G/K$ defined by the generators
of $Z(\mathfrak g)$ is equals to that defined by $\mathbb D(G/K)$.
In fact, this holds if the image of $Z(\mathfrak g)\subset U(\mathfrak g)^K$ 
under the identification \eqref{def:bargamma} generates
$\mathbb D(G/K)$, which is valid when $G$ is of classical type.
Moreover even when the image of $Z(\mathfrak g)$ doesn't generates 
$\mathbb D(G/K)$, the system of the equations defined by the generators
of $Z(\mathfrak g)$ characterize the image of the Poisson transform \eqref{eq:4.14}
for generic parameter $\lambda$, which follows from \cite{He4} or \cite{Oc}.

Hence we can expect that the system of differential equations characterizing the 
image of the Poisson transform $\mathcal P_{\Xi,\mu}^\ell$ is given by a two 
sided ideal of $U(\mathfrak g)$ at least when the parameter $\mu$ is generic.
In another word it is expected to coincide with the system defined by a 
certain left ideal of $U(\mathfrak g)$ studied in the previous section.
Note that if $\mathcal P_{\Xi,\mu}^\ell$ is injective, the two sided ideal
should kill the preimage $\mathcal B(G/P_\Xi;\mathcal L_{\Xi, \mu}^\ell)$.
Hence the system obtained by the operators killing 
$\mathcal B(G/P_\Xi;\mathcal L_{\Xi, \mu}^\ell)$ is expected to be the desired one.
The annihilator of $\mathcal B(G/P_\Xi;\mathcal L_{\Xi, \mu}^\ell)$ corresponds 
to that of a generalized Verma module, which will be explained.

Let $\pi$  denote the left regular representation of $G$ on 
$\mathcal{F}(G)$ defined by 
\[
(\pi(g)\varphi)(x)=\varphi(g^{-1}x)\quad (x\in G,\,\varphi\in \mathcal{F}(G)). 
\]
Here $\mathcal{F}$ denotes one of function spaces 
such as $\mathcal{A}$ (real analytic functions), $C^\infty$ (smooth functions), 
$\mathcal{D}'$ 
(distributions), 
$\mathcal{B}$ (hyperfunctions).  
The corresponding representation of $\mathfrak{g}$ is denoted by $\pi$. That is
\[
(\pi(X)\varphi)(x)=\left.\tfrac{d}{dt}\varphi(e^{-tX}x)\right|_{t=0}
\quad (\varphi\in C^\infty(G),\,X\in\mathfrak{g},\,x\in G).
\]
For an element $X\in\mathfrak{g}$, let $L_X$ denote the differential 
operator on $G$ defined by 
\[
(L_X\varphi)(x)=\left.\tfrac{d}{dt}\varphi(x e^{tX})\right|_{t=0}
\quad (\varphi\in C^\infty(G),\,X\in\mathfrak{g},\,x\in G).
\]
The universal enveloping algebra $U(\mathfrak{g})$ is 
identified with the algebra of left $G$-invariant differential operators 
on $G$ by the correspondence $X\mapsto L_X$. 
Under this identification we will write $X\varphi=L_X\varphi$  
and denote by $\ord D$ the order of the corresponding differential operator
for $D\in U(\mathfrak g)$.

Let $\mathfrak a$ be the Cartan subalgebra of the complexification of 
$\mathfrak g$ containing $\mathfrak a_{\mathfrak p}$ and let 
$\Sigma(\mathfrak a)^+$ be a compatible positive system of the complexification
attached to the Cartan subalgebra $\mathfrak a$ and let $\mathfrak b$ be
the corresponding Borel subalgebra.
Denoting the fundamental system of $\Sigma(\mathfrak a)^+$ by $\Psi(\mathfrak a)$,
we have 
$\Psi(\mathfrak a_{\mathfrak p})=\{\alpha|_{\mathfrak a_{\mathfrak p}}\,;\,
\alpha\in\Psi(\mathfrak a)\}\setminus\{0\}$.
For a subset $\Xi\subset\Psi(\mathfrak a_{\mathfrak p})$ we define a subset
\[
  \Theta=\{\alpha\in\Psi(\mathfrak a)\,;\,\alpha|_{\mathfrak a_{\mathfrak p}}\in
  \Xi\cup\{0\}\}\subset\Psi(\mathfrak{a})
\]
and let denote by $\mathfrak p_{\Theta}$, $\mathfrak g_\Theta$, 
$\mathfrak n_\Theta$ and $\mathfrak p_0$ 
the complexifications of $\mathfrak p_{\Xi}$,  
$\mathfrak m_{\Xi}+\mathfrak a_{\Xi}$, $\mathfrak n_\Xi$ and the Lie algebra of 
$P$, respectively.
Note that $\Theta$ corresponds to a fundamental system of the root system of
$\mathfrak g_\Theta$.
Let $\lambda$ denote the character of $\mathfrak p_{\Theta}$ defined by
\begin{equation}
 \tau_{\Xi.\ell,\mu}(e^X)=e^{\lambda(X)}\quad(X\in\mathfrak p_{\Xi}).
\end{equation}
Let $\mathfrak a_{\Theta}$ be the center of $\mathfrak g_\Theta$.
Then $\lambda$ is identified with an element of the dual 
$\mathfrak a_{\Theta}^*$ of $\mathfrak a_\Theta$.
Define left ideals 
\begin{align*}
& J_{\Theta}(\lambda)
=\sum_{X\in \mathfrak{p}_\Theta}U(\mathfrak{g})(X-\lambda(X)), \\
& J_0(\lambda)
=\sum_{X\in \mathfrak{p}_0}U(\mathfrak{g})(X-\lambda(X)),\\
& J(\lambda)
=\sum_{X\in \mathfrak{b}}U(\mathfrak{g})(X-\lambda(X))
\end{align*}
of $U(\mathfrak{g})$. Then we can see easily that 
\[
J_{\Theta}(\lambda)=\{D\in U(\mathfrak{g})\,;\,
Df=0 \quad (\forall \,f\in\mathcal{F}(G/P_\Xi;\mathcal L_{\Xi,\mu}^\ell)\}.
\]

Let $a$ denote the anti-automorphism of $U(\mathfrak{g})$ defined 
by $a(X)=-X,\,a(XY)=YX$ for $X,\,Y\in\mathfrak{g}$. 

\begin{prop}
\label{prop:characterizedeg}
Assume that $I_{\Theta}(\lambda)$ is a 
two sided ideal of $U(\mathfrak{g})$ that satisfies 
\begin{equation}
\label{eqn:gap3}
J_{\Theta}(\lambda)=
I_{\Theta}(\lambda)+J_0(\lambda).
\end{equation}
Then 
\[
  \mathcal{F}(G/P_\Xi;\mathcal L_{\Xi,\mu}^\ell)=
\{
f\in \mathcal{F}(G/P;\mathcal L_{\mu + \rho(\Xi)}^\ell)\,;\,
\pi(a(D))f=0\quad (\forall\,D\in I_{\Theta}(\lambda))
\}
\]
\end{prop}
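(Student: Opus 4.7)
The plan is to reduce the statement to the infinitesimal description
\[
\mathcal F(G/P_\Xi;\mathcal L^\ell_{\Xi,\mu})=\{f\in\mathcal F(G)\,:\,L_Df=0\ \forall D\in J_\Theta(\lambda)\},
\]
where $L_D$ is the left-invariant differential operator on $G$ attached to $D\in U(\mathfrak g)$; this is the displayed observation preceding the proposition together with integration of $L_Xf=\lambda(X)f$ along one-parameter subgroups of $P_\Xi$. Similarly, $\mathcal F(G/P;\mathcal L^\ell_{\mu+\rho(\Xi)})$ is characterized by $L_Df=0$ for $D\in J_0(\lambda)$. Under \eqref{eqn:gap3}, being annihilated by $J_\Theta(\lambda)$ decomposes into being annihilated by $J_0(\lambda)$ (i.e.\ lying in the bigger section space) together with being annihilated by $I_\Theta(\lambda)$, so the proposition reduces to showing that, for $f\in\mathcal F(G/P;\mathcal L^\ell_{\mu+\rho(\Xi)})$, the conditions $L_Df=0$ $(D\in I_\Theta(\lambda))$ and $\pi(a(D))f=0$ $(D\in I_\Theta(\lambda))$ are equivalent.

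The bridge is the pointwise identity
\[
(\pi(a(D))f)(g)=(L_{\Ad(g^{-1})D}f)(g)\qquad(D\in U(\mathfrak g),\ g\in G,\ f\in\mathcal F(G)).
\]
For $X\in\mathfrak g$ this is immediate from $e^{-tX}g=g\cdot(g^{-1}e^{-tX}g)$. Extension to $U(\mathfrak g)$ proceeds by induction on PBW degree: $D\mapsto\pi(a(D))$ is an antihomomorphism while $D\mapsto L_{\Ad(g^{-1})D}$ is a homomorphism, but after evaluation at the single point $g$ these agree by Clairaut's theorem on the commutation of mixed partial derivatives. I would pair this with the $\Ad$-stability $\Ad(g)I_\Theta(\lambda)=I_\Theta(\lambda)$ of the two-sided ideal, which follows from $\mathrm{ad}(X)I_\Theta(\lambda)\subset I_\Theta(\lambda)$ for $X\in\mathfrak g$ via the formal identity $\Ad(\exp X)=\exp\mathrm{ad}(X)$ (acting by finitely many terms on each PBW filtration step).

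With these tools in hand, the two inclusions are essentially the same computation. For ``$\subset$'': given $f\in\mathcal F(G/P_\Xi;\mathcal L^\ell_{\Xi,\mu})$, Lemma~\ref{lem:4.10} places $f$ in $\mathcal F(G/P;\mathcal L^\ell_{\mu+\rho(\Xi)})$, and for any $D\in I_\Theta(\lambda)\subset J_\Theta(\lambda)$ and $g\in G$, the element $\Ad(g^{-1})D$ still lies in $J_\Theta(\lambda)$, so $(\pi(a(D))f)(g)=(L_{\Ad(g^{-1})D}f)(g)=0$. For ``$\supset$'': given $f$ on the right, take $D'\in J_\Theta(\lambda)$ and use \eqref{eqn:gap3} to split $D'=D_1+D_2$ with $D_1\in I_\Theta(\lambda)$ and $D_2\in J_0(\lambda)$; then $L_{D_2}f=0$ by the minimal-parabolic part of the hypothesis, while for each $g$ the element $\Ad(g)D_1$ is again in $I_\Theta(\lambda)$, whence $(L_{D_1}f)(g)=(\pi(a(\Ad(g)D_1))f)(g)=0$. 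Hence $L_{D'}f=0$ for all $D'\in J_\Theta(\lambda)$, and $f$ lies on the left by the infinitesimal description.

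The principal obstacle is the pointwise identity, which converts between the right-invariant operator $\pi(a(D))$ and the left-invariant operator $L_D$ via an $\Ad$-twist; once it and the $\Ad$-stability of $I_\Theta(\lambda)$ are in place, the rest is a direct manipulation with the algebraic decomposition \eqref{eqn:gap3} and the standard infinitesimal characterization of the two section spaces.
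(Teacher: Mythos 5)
Your proof is correct and follows essentially the same route as the paper's: the paper's entire argument consists of the pointwise identity $(\pi(a(D))f)(g)=\bigl((\Ad(g^{-1})D)f\bigr)(g)$ together with the $\Ad(G)$-stability of the two-sided ideal $I_\Theta(\lambda)$, combined with \eqref{eqn:gap3}, which is exactly your skeleton. You have simply supplied the details the paper leaves implicit (the Clairaut/mixed-partials verification of the identity on all of $U(\mathfrak g)$, the local finiteness behind $\Ad(\exp X)=e^{\mathrm{ad}X}$, and the two inclusions).
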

\pf Since 
\[
(\pi(a(D))f)(g)=((\text{Ad}(g^{-1})D)f)(g)\quad (\forall\,g\in G)
\]
and $I_{\Theta}(\lambda)$ is a two sided ideal 
of $U(\mathfrak{g})$, the proposition follows. 
\qed

\medskip
The above proposition shows that the two sided ideal 
$I_{\Theta}(\lambda)$, which satisfies (\ref{eqn:gap3})   
characterizes $\mathcal{F}(G/P_\Xi;\mathcal L_{\Xi,\mu}^\ell)$ in
$\mathcal{F}(G/P;\mathcal L_{\mu + \rho(\Xi)}^\ell)$
as a $U(\mathfrak{g})$-submodule. 
Notice that the condition
\begin{equation}
\label{eqn:gap}
J_{\Theta}(\lambda)=
I_{\Theta}(\lambda)+J(\lambda)
\tag{GAP}
\end{equation}
studied in \cite{O4,O5,O6,OO} implies (\ref{eqn:gap3}). 

\begin{thm}
Suppose that the Poisson transform 
\[
\mathcal{P}_{\mu+\rho(\Xi)}^\ell\,:\,
\mathcal{B}(G/P;\mathcal L_{\mu+\rho(\Xi)}^\ell)\rightarrow 
\mathcal{A}(G/K;\mathcal{M}_{\mu+\rho(\Xi)}^\ell)
\]
is bijective 
and assume that \eqref{eqn:gap3} holds for a two sided 
ideal $I_{\Theta}(\lambda)$ of $\mathfrak{g}$. 
Then the image of the Poisson transform of 
$\mathcal{B}(G/P_\Xi;\mathcal L_{\Xi,\mu}^\ell)$ is characterized by 
the system $\mathcal{M}_{\mu+\rho(\Xi)}^\ell$ 
together with the system defined by $I_{\Theta}(\lambda)$. 
\end{thm}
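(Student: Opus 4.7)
The plan is to transport the characterization of $\mathcal{B}(G/P_\Xi;\mathcal{L}^\ell_{\Xi,\mu})$ inside the bigger space $\mathcal{B}(G/P;\mathcal{L}^\ell_{\mu+\rho(\Xi)})$ (Proposition~\ref{prop:characterizedeg}) across the bijective Poisson transform. First I would use Lemma~\ref{lem:4.10}, which provides the embedding $\mathcal{B}(G/P_\Xi;\mathcal{L}^\ell_{\Xi,\mu})\subset \mathcal{B}(G/P;\mathcal{L}^\ell_{\mu+\rho(\Xi)})$ and identifies the image of $\mathcal{P}^\ell_{\Xi,\mu}$ with the image of the bigger Poisson transform restricted to this subspace. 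In particular, comparing the integral definitions \eqref{eq:4.28} and \eqref{eq:4.53}, one sees that $\operatorname{Im}\mathcal{P}^\ell_{\Xi,\mu}=\mathcal{P}^\ell_{\mu+\rho(\Xi)}\bigl(\mathcal{B}(G/P_\Xi;\mathcal{L}^\ell_{\Xi,\mu})\bigr)$.

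Next, apply Proposition~\ref{prop:characterizedeg} with $\mathcal{F}=\mathcal{B}$ to obtain the equality
\[
\mathcal{B}(G/P_\Xi;\mathcal{L}^\ell_{\Xi,\mu})
=\{f\in \mathcal{B}(G/P;\mathcal{L}^\ell_{\mu+\rho(\Xi)}) : \pi(a(D))f=0\ \text{for all}\ D\in I_\Theta(\lambda)\}.
\]
The core of the argument is then that the conditions $\pi(a(D))f=0$ on $f$ and $\pi(a(D))u=0$ on $u=\mathcal{P}^\ell_{\mu+\rho(\Xi)}f$ are equivalent. This follows from $G$-equivariance of the Poisson transform, which extends from $\mathfrak{g}$ to $U(\mathfrak{g})$ and gives
\[
\pi(a(D))\bigl(\mathcal{P}^\ell_{\mu+\rho(\Xi)}f\bigr)=\mathcal{P}^\ell_{\mu+\rho(\Xi)}\bigl(\pi(a(D))f\bigr)\qquad (D\in U(\mathfrak{g})).
\]
Injectivity of $\mathcal{P}^\ell_{\mu+\rho(\Xi)}$ (part of the hypothesis) then makes the two vanishing conditions equivalent.

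Putting these pieces together, the image of $\mathcal{P}^\ell_{\Xi,\mu}$ inside $\mathcal{A}(G/K;\mathcal{M}^\ell_{\mu+\rho(\Xi)})$ is exactly
\[
\{u\in \mathcal{A}(G/K;\mathcal{M}^\ell_{\mu+\rho(\Xi)}):\pi(a(D))u=0\ \text{for all}\ D\in I_\Theta(\lambda)\},
\]
which is precisely the assertion that the system defined by $I_\Theta(\lambda)$, together with $\mathcal{M}^\ell_{\mu+\rho(\Xi)}$, characterizes the image of the Poisson transform from the $P_\Xi$-boundary.

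The main obstacle is really the bookkeeping of Step~1: checking that the inclusion of boundary data from the smaller flag variety is compatible with the Poisson integral expressions and with the identification of the eigenspace target. Once that matching is confirmed via Lemma~\ref{lem:4.10}, the rest of the proof is a formal consequence of Proposition~\ref{prop:characterizedeg}, the two-sidedness of $I_\Theta(\lambda)$ (which lets the operators $\pi(a(D))$ descend to conditions on both sides), and the bijectivity hypothesis, so no additional analytic work beyond what is already contained in the earlier sections is needed.
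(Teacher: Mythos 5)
Your argument is correct and takes essentially the same route as the paper: the paper's proof is the one-line observation that the $G$-equivariance of the bijective Poisson transform $\mathcal P^\ell_{\mu+\rho(\Xi)}$ and of its inverse (the boundary value map) transports the characterization of Proposition~\ref{prop:characterizedeg} from $\mathcal B(G/P;\mathcal L^\ell_{\mu+\rho(\Xi)})$ to $\mathcal A(G/K;\mathcal M^\ell_{\mu+\rho(\Xi)})$. Your write-up simply makes explicit the bookkeeping the paper leaves implicit, namely that under the inclusion of Lemma~\ref{lem:4.10} the transform $\mathcal P^\ell_{\Xi,\mu}$ is the restriction of $\mathcal P^\ell_{\mu+\rho(\Xi)}$ (both being given by the same integral over $K$), so that equivariance plus injectivity/surjectivity yields the stated characterization of the image.
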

\pf
Since the Poisson transform and its inverse map (boundary value 
map) are both $G$-equivariant, 
Proposition~\ref{prop:characterizedeg} implies the theorem. 
\qed

\medskip
It is clear that there exists a two sided ideal $I_\Theta(\lambda)$
satisfying \eqref{eqn:gap} if and only if
\begin{equation}\label{eqn:AnnGap}
 J_\Theta(\lambda)=\Ann\bigl(M_\Theta(\lambda)\bigr)+J(\lambda).
\end{equation}
Here $M_\Theta(\lambda)$ is the generalized Verma module
$U(\mathfrak g)/J_\Theta(\lambda)$ and
\begin{align*}
\Ann\bigl(M_\Theta(\lambda)\bigr)
 &:=
  \{D\in U(\mathfrak g)\,;\,DM_\Theta(\lambda)=0\}\\
 &=\bigcap_{g\in G}\Ad(g)J_\Theta(\lambda)\\
 &=
  \{D\in U(\mathfrak g)\,;\,\pi(a(D))\mathcal{B}(G/P_\Xi;\mathcal L_{\Xi,\mu}^\ell)
  =0\}.
\end{align*}
The condition \eqref{eqn:AnnGap} is satisfied at least $\lambda$ is 
dominant and regular, namely,
\begin{equation}\label{eqn:DomReg}
 -\frac{\langle\lambda+\tilde\rho,\alpha\rangle}
  {\langle\alpha,\alpha\rangle}\notin
 \{0,1,2,3,\ldots\}\quad(\forall\alpha\in\Sigma(\mathfrak a)^+),
\end{equation}
which is a consequence of \cite[Theorem~3.12]{OO}.
Here $\tilde\rho$ is the half of the sum of the elements of $\Sigma(\alpha)^+$.
Hence \eqref{eqn:AnnGap} is satisfied for the harmonic case when
$\mu=\rho$, $\ell=0$ and $\lambda=0$.

When the complexification of $\mathfrak g$ equals $\mathfrak{gl}_N$,
Oshima \cite{O5} constructs
the generator system of $\Ann\bigl(M_\Theta(\lambda)\bigr)$ for any 
$\Theta\subset\Psi(\mathfrak a)$ and any character $\lambda$ of 
$\mathfrak p_\Theta$  through quantizations of elementary divisors
and gives necessary and sufficient condition for \eqref{eqn:AnnGap}
(cf.~\cite[Lemma 4.15]{OO}), which says that \eqref{eqn:AnnGap} is 
valid at least if $\lambda$ is regular, namely,
\begin{equation}\label{eqn:Reg}
 \frac{\langle\lambda+\tilde\rho,\alpha\rangle}
  {\langle\alpha,\alpha\rangle}\ne 0
  \quad(\forall\alpha\in\Sigma(\mathfrak a)^+).
\end{equation}
Oshima \cite{O6} constructs a generator system of a two sided ideal
$I_\Theta(\lambda)$ when $\mathfrak g$ is a real form of finite copies 
of classical complex Lie algebras $\mathfrak {gl}_n$, $\mathfrak {sp}_n$
or $\mathfrak o_{n}$ and shows that $I_\Theta(\lambda)$ satisfies 
\eqref{eqn:gap} if $\lambda$ is (strongly) regular, which will be 
explained in the next subsection.

\subsection{Minimal polynomials}
Oshima \cite{O6} constructed a set of generators of the annihilator of 
a generalized Verma module of the scalar type for  classical reductive 
Lie algebras. 
We review on the main result of \cite{O6} and discuss its implication for  
the Poisson transform on a degenerate series representation. 

Let $N$ be a positive integer and 
let $\mathfrak{gl}_N\simeq \text{End}(\mathbb{C}^N)$ be the general linear 
Lie algebra. Let $E_{ij}\in\ M(N,\mathbb{C})$ be the matrix whose $(i,j)$ entry is $1$ and 
the other entries are all zero. We have a triangular decomposition
\[
\mathfrak{gl}_N=\bar{\mathfrak{n}}_N+\mathfrak{a}_N+\mathfrak{n}_N,
\]
where 
\[
\mathfrak{a}_N=\sum_{j=1}^N \mathbb{C}E_{ii},
\quad 
\mathfrak{n}_N=\sum_{1\leq j<i\leq N}\mathbb{C}E_{ij},
\quad 
\bar{\mathfrak{n}}_N=\sum_{1\leq i<j\leq N}\mathbb{C}E_{ij}.
\]

Let $\mathfrak{g}$ be one of the classical complex Lie algebras $\mathfrak{gl}_n$, 
$\mathfrak{o}_{2n}$, $\mathfrak{o}_{2n+1}$ or 
$\mathfrak{sp}_n$ and put $N=n,\,2n,\,2n+1$, or $2n$, 
respectively, so that $\mathfrak{g}$ is a subalgebra of $\mathfrak{gl}_N$. 
Denoting \[\tilde I_n = \Bigl(\delta_{i, n+1-j}\Bigr)
   _{\substack{1\le i\le n\\ 1\le j\le n}}
   = \left(\begin{smallmatrix}  & & 1\\ & \udots & \\ 1 & & \end{smallmatrix}\right)
\text{ and }\tilde J_n = \left(\begin{smallmatrix}
   & \tilde I_n\\ -\tilde I_n
   \end{smallmatrix}\right),\]
we naturally identify
\begin{equation}\label{eq:inv}
\begin{aligned}
 \mathfrak o_n  &= \{X\in\mathfrak{gl}_n;\, \sigma_{\mathfrak o_{n}}(X) = X
 \}\quad \text{with }
 \sigma_{\mathfrak o_{n}}(X) = -\tilde I_n{}^tX\tilde I_n,\\
 \mathfrak{sp}_n &= \{X\in\mathfrak{gl}_{2n};\, \sigma_{\mathfrak{sp}_n}(X) = X
 \}\quad \text{with } \sigma_{\mathfrak{sp}_n}(X) = -\tilde J_n{}^tX\tilde J_n.
\end{aligned}
\end{equation}
Let $\sigma$ be the involutive automorphism of $\mathfrak{gl}_N$ defined as above 
so that $\mathfrak{g}=\mathfrak {gl}_N^\sigma
:=\{X\in\mathfrak{gl}_N\,;\,\sigma(X)=X\}$ 
(cf.~\cite[Definition~3.1]{O6}).
Put
$F_{ij} = E_{ij}$ if $\mathfrak g = \mathfrak{gl}_n$ and
$F_{ij} = E_{ij} + \sigma(E_{ij})$ with $\mathfrak g = \mathfrak{gl}_N^\sigma$ in 
other cases.  
Moreover putting $F_i=F_{ii}$ and 
$\mathbb F=(F_{ij})_{1\leq i,\,j\leq N}\in M(N,\mathfrak{g})$,
we have
\begin{equation}
 \Ad(g)q(\mathbb F)={}^tg\cdot q(\mathbb F)\cdot g^{-1}\quad(\forall g\in G)
\end{equation}
for any polynomial $q(x)$ and the analytic subgroup $G$ of $GL(n,\mathbb C)$
with the Lie algebra $\mathfrak g$.

We have a triangular decomposition of $\mathfrak{g}$
\[
\mathfrak{g}=\bar{\mathfrak{n}}+\mathfrak{a}+\mathfrak{n},
\]
where $\mathfrak{a}=\mathfrak{g}\cap\mathfrak{a}_N$, 
$\mathfrak{n}=\mathfrak{g}\cap\mathfrak{n}_N$ and 
$\bar{\mathfrak{n}}=\mathfrak{g}\cap\bar{\mathfrak{n}}_N$. 
Then $\mathfrak{a}$ is a Cartan subalgebra of $\mathfrak{g}$ and 
$\mathfrak{b}=\mathfrak{a}+\mathfrak{n}$ is a Borel subalgebra 
of $\mathfrak{g}$. 
Let $\Theta = \{0<n_1<n_2<\cdots<n_L=n\}$ be a sequence of strictly 
increasing positive integers ending at $n$ and put 
$H_\Theta=\sum_{k=1}^L\sum_{i=1}^{n_k}F_i$. 
Define 
\[
\begin{cases}
& \mathfrak{m}_\Theta=\{X\in\mathfrak{g}\,:\,\text{ad}(H_\Theta)X=0\}, \\
& \mathfrak{n}_\Theta=\{X\in\mathfrak{n}\,:\,\langle X,\mathfrak{m}_\Theta
\rangle=0\},
\quad 
\bar{\mathfrak{n}}_\Theta=\{X\in\bar{\mathfrak{n}}\,:\,\langle X,\mathfrak{m}_\Theta
\rangle=0\} \\
& \mathfrak{p}_\Theta=\mathfrak{m}_\Theta+\mathfrak{n}_\Theta.
\end{cases}
\]
Then $\mathfrak{p}_\Theta$ is a parabolic subalgebra of $\mathfrak{g}$ 
containing $\mathfrak{b}$. Put $H_{\bar{\Theta}}=\sum_{k=1}^{L-1}\sum_{i=1}^{n_k}
F_i$ and define $\mathfrak{m}_{\bar{\Theta}},\,\mathfrak{n}_{\bar{\Theta}},\,
\bar{\mathfrak{n}}_{\bar{\Theta}}$ and $\mathfrak{p}_{\bar{\Theta}}$ 
by replacing $\Theta$ by $\bar{\Theta}$ in the above definition.

For $1\leq i\leq n$ with $n_{j-1}<i\leq n_j$, 
put $\iota_\Theta(i)=j$.  
For $\lambda=(\lambda_1,\dots,\lambda_L)\in\mathbb{C}^L$ 
define a character of $\mathfrak{p}_\Theta$ by 
\[
\lambda\bigl(X+\sum_{i=1}^n C_iF_i\bigr)=\sum_{i=1}^n C_i\lambda_{\iota_\Theta(i)}
\quad\text{for $X\in 
\mathfrak{n}_\Theta+[\mathfrak{m}_\Theta,\mathfrak{m}_\Theta]$ 
and $C_i\in\mathbb{C}$}.
\]
In this subsection $U(\mathfrak g)$ denotes the universal enveloping algebra 
of the complex Lie algebra $\mathfrak g$.
The generalized Verma module 
$M_\Theta(\lambda)=U(\mathfrak g)/J_\Theta(\lambda)$ is
a quotient of the Verma module $M(\lambda)=U(\mathfrak g)/J(\lambda)$.
If $\lambda_L=0$, we similarly define a character of  
$\mathfrak{p}_{\bar{\Theta}}$, $J_{\bar{\Theta}}(\lambda)$ and 
$M_{\bar{\Theta}}(\lambda)$.

Define polynomials
\begin{equation}\label{eq:minipol}
\begin{cases}
 q_\Theta(\mathfrak{gl}_n;x,\lambda) = 
  \prod\limits_{j=1}^L(x - \lambda_j - n_{j-1}),\\
 q_\Theta(\mathfrak{o}_{2n+1};x,\lambda) = 
  (x-n)\prod\limits_{j=1}^L(x - \lambda_j - n_{j-1})
     (x + \lambda_j +  n_j - 2n),\\
 q_\Theta(\mathfrak{sp}_{n};x,\lambda) = 
   \prod\limits_{j=1}^L(x - \lambda_j - n_{j-1})
     (x + \lambda_j + n_j - 2n - 1),\\ 
 q_\Theta(\mathfrak{o}_{2n};x,\lambda) = 
   \prod\limits_{j=1}^L(x - \lambda_j - n_{j-1})
     (x + \lambda_j + n_j - 2n + 1)
\end{cases}
\end{equation}
and if $\mathfrak g=\mathfrak{sp}_n$, 
$\mathfrak{o}_{2n+1}$ or $\mathfrak{o}_{2n}$, 
\begin{equation*}
 q_{\bar\Theta}(\mathfrak g;x, \lambda) = 
  (x-n_{L-1})\prod_{j=1}^{L-1}(x-\lambda_j - n_{j-1})
      (x + \lambda_j + n_j -2n - \delta_{\mathfrak g})
\end{equation*}
with
\[
\delta_{\mathfrak g} =
\begin{cases}
 1\quad&\text{if }\mathfrak g = \mathfrak{sp}_n,\\
 0&\text{if }\mathfrak g = \mathfrak{so}_{2n+1}\text{ or }\mathfrak{gl}_n,\\
 -1&\text{if }\mathfrak g = \mathfrak{so}_{2n}.\\
\end{cases}
\]

Define two sided ideals of $U(\mathfrak g)$
\begin{equation}
\begin{cases}
 I_\Theta(\lambda) = \sum\limits_{i=1}^N\sum\limits_{j=1}^N
   U(\mathfrak g)
   q_\Theta(\mathfrak g; \mathbb F,\lambda)_{ij}
  + \sum\limits_{j\in J}U(\mathfrak g)
  \Bigl(\Delta_j - \lambda(\Delta_j)\Bigr),\\
 I_{\bar\Theta}(\lambda) = \sum\limits_{i=1}^N\sum\limits_{j=1}^N
   U(\mathfrak g)
   q_{\bar\Theta}(\mathfrak g; \mathbb F, \lambda)_{ij}
  + \sum\limits_{j\in \bar J}U(\mathfrak g)
  \Bigl(\Delta_j - \lambda(\Delta_j)\Bigr),
\end{cases}
\end{equation}
where $\Delta_1,\dots,\Delta_n$ are fixed generators of 
the center $Z(\mathfrak{g})\subset U(\mathfrak{g})$ 
with 
\[
\begin{cases}
 \ord \Delta_j=j\quad (1\leq j\leq n)
   &\text{if }\mathfrak g=\mathfrak{gl}_n,\\
 \ord \Delta_j=2j\quad (1\leq j\leq n)
   &\text{if }\mathfrak g=\mathfrak{o}_{2n+1}
\text{ or }\mathfrak g=\mathfrak{sp}_n,\\
 \ord \Delta_j=2j\quad (1\leq j< n),
\quad \ord  \Delta_n=n
   &\text{if }\mathfrak g=\mathfrak{o}_{2n}
\end{cases}
\]
and
\begin{equation}
\begin{cases}
 J = \{1,2,\ldots,L-1\}, \quad N = n
   &\text{if }\mathfrak g=\mathfrak{gl}_n,\\
 J = \{1,2,\ldots,L\},\ \bar J = \{1,2,\ldots,L-1\}, \quad N = 2n+1
   &\text{if }\mathfrak g=\mathfrak{o}_{2n+1},\\
 J = \bar J = \{1,2,\ldots,L-1\},\quad N = 2n
   &\text{if }\mathfrak g=\mathfrak{sp}_n,\\
 J = \bar J = \{1,2,\ldots,L-1\}\cup \{n\},\quad N = 2n
   &\text{if }\mathfrak g=\mathfrak{o}_{2n}.
\end{cases}
\end{equation}
Here $\lambda(\Delta_j)\in\mathbb C$ are defined so that 
$\Delta_j-\lambda(\Delta_j)\in\Ann\bigl(M(\lambda)\bigr)$.
When $\mathfrak g=\mathfrak o_{2n}$, $\Delta_j$ for $j=1,\dots,n-1$
are fixed and $\Delta_n$ is not fixed
by a non-trivial outer automorphism of $\mathfrak o_{2n}$.

Oshima~\cite{O6} studied sufficient conditions on $\lambda$ 
such that 
\begin{equation}
\label{eqn:gap0}
J_{\Theta'}(\lambda)=I_{\Theta'}(\lambda)+J(\lambda_{\Theta}) 
\end{equation}
with $\Theta'=\Theta$ or $\bar{\Theta}$. For $\mathfrak{g}=\mathfrak{gl}_n$, 
a necessary and sufficient condition on $\lambda$ for \eqref{eqn:gap0} 
is given (\cite[Remark 4.5 (i)]{O6}). 
In particular, in the case when $\mathfrak{g}=\mathfrak{gl}_n,\,
\mathfrak{o}_{2n+1}$ or $\mathfrak{sp}_n$, 
\eqref{eqn:gap0} holds 
for $\Theta'=\Theta$ if $\lambda|_\mathfrak{a}+\tilde\rho$ is regular, 
that is 
$\langle\lambda|_\mathfrak{a}+\tilde\rho,\alpha\rangle\not=0$ for 
any roots $\alpha\in\Sigma(\mathfrak{a})$.
When $\mathfrak{g}=\mathfrak{o}_{2n}$, \eqref{eqn:gap0} holds for $\Theta'=\Theta$ 
if $\lambda_\Theta|_\mathfrak{a}+\tilde\rho$ is strongly regular, 
that is  $\lambda|_\mathfrak{a}+\tilde\rho$ is not fixed by the
non-trivial outer automorphism of the root system $\Sigma(\mathfrak{a})$. 
Moreover, for $\mathfrak{g}=\mathfrak{gl}_n$, 
$\mathfrak{o}_{2n}$, $\mathfrak{o}_{2n+1}$ or 
$\mathfrak{sp}_n$, 
\eqref{eqn:gap0} holds for $\Theta=\bar{\Theta}$ if $\lambda$ satisfies 
the same regularity condition as above and $\lambda_L=0$. 
See Section~4 of \cite{O6} for details

The above construction of the minimal 
polynomial $q_\Theta(x,\lambda)$ can be extended for any complex 
reductive Lie algebra $\mathfrak{g}$ 
by considering a faithful representation $(\pi,\mathbb{C}^N)$ of $\mathfrak{g}$ 
(see \cite[Section 2]{O6}). 
Oshima and Oda~\cite{OO} studied sufficient conditions for the 
counterpart of (\ref{eqn:gap0}) with $\Theta'=\Theta$ for 
a general reductive Lie algebra $\mathfrak{g}$ 
(\cite[Theorem~3.21, Proposition~3.25, Proposition~3.27]{OO}). 
In particular, 
when $\mathfrak{g}$ is one of the simple exceptional 
Lie algebras $E_6,\,E_7,\,E_8,\,F_4$ or $G_2$, 
 the 
counterpart of (\ref{eqn:gap0}) with $\Theta'=\Theta$ associated with 
 non-trivial irreducible 
representation of $\mathfrak{g}$ with minimal degree holds if 
$\text{Re}\,\langle\lambda_\Theta+\tilde\rho,\alpha\rangle>0$ for all 
positive roots $\Sigma(\mathfrak{a})^+$ with 
respect to a Cartan subalgebra $\mathfrak{a}$ of $\mathfrak{g}$ 
(cf. \cite[Remark~4.13]{OO}). 

Lastly in this section
we list the order of the elements of $I_\Theta(\lambda)$ associated 
with the natural representation or the non-trivial representation with 
minimal degree according to the condition that $\mathfrak g$ is of 
classical type or exceptional type, respectively, when $\Theta$ corresponds
to a maximal parabolic subgroup $P_\Xi$ of $G$ as was described in the
previous subsection.
In the following Satake diagram the number attached to a simple root $\alpha
\in\Psi(\mathfrak a)$
indicates the order of the elements of $I_\Theta(\lambda)$ by the correspondence
$\Theta=\Psi(\mathfrak a_{\mathfrak p})\setminus
\{\alpha|_{\mathfrak a_{\mathfrak p}}\}$.
The order is easily seen from \eqref{eq:minipol} if $\mathfrak g$ is of 
classical type and it is given in \cite[\S4]{OO} if $\mathfrak g$ is of 
exceptional type.

The dotted circles correspond to the Shilov boundaries 
in Hermitian cases.

When $\mathfrak g$ is a complex simple Lie algebra, 
the degree is obtained by the corresponding Dynkin 
diagram with no arrow and no black circle.
\medskip

\centerline
{``Degree of minimal polynomials associated to natural/smallest representations"}

\vspace*{-5mm}
\begin{gather*}
\begin{xy}
 (14,-4) *{A_n^1:SL(n+1,\mathbb R)};
 \ar@{-} (0,0); *++!D{2} *{\circ} ;
  (5,0)  *++!D{2} *{\circ}="B"
 \ar@{-} "B";(10,0) *++!D{2} *{\circ}="C"
 \ar@{-} "C";(14,0) \ar@{.} (14,0);(19,0)^*!U{\cdots}
 \ar@{-} (19,0);(23,0) *++!D{2} *{\circ}="H"
 \ar@{-} "H";(28,0): *++!D{2} *{\circ}
\end{xy}
\qquad
\begin{xy}
 (14,-4) *{A_n^4:SU^*(2n)};
 \ar@{-} *{\bullet} ; (5,0)  *++!D{2} *{\circ}="B"
 \ar@{-} "B";(10,0) *{\bullet}="C"
 \ar@{-} "C";(14,0) \ar@{.} (14,0);(19,0)^*!U{\cdots}
 \ar@{-} (19,0);(23,0) *++!D{2} *{\circ}="H"
 \ar@{-} "H";(28,0) *{\bullet}
\end{xy}
\allowdisplaybreaks\\  
%%%%%%%%%%%%%%%%%%%%%%%%%%%%%%%%%%%%%%%%%%%%
\begin{xy}
 (20,-9) *{C_n^{2,1}:SU(n,n)};
 \ar@{-}  *++!D{3} *{\circ}="A";
 \ar@{-} "A";(4,0) \ar@{.} (4,0);(9,0)^*!U{\cdots}
 \ar@{-} (9,0);(13,0) *++!D{3} *{\circ}="B"
 \ar@{-} "B";(18,0) *++!D{2} *{\cdot} *{\circ}="C"
 \ar@{-} "C";(23,0) *++!D{3} *{\circ}="D"
 \ar@{-} "D";(27,0) \ar@{.} (27,0);(32,0)^*!U{\cdots}
 \ar@{-} (32,0);(36,0) *++!D{3} *{\circ}="E"
 \ar@/_5mm/ @{<->} "A";"E"
 \ar@/_/ @{<->} "B";"D"
\end{xy}
\qquad
\begin{xy}
  (26,-9) *{BC_n^{2m,2,1}:SU(n+m,n)};
 \ar@{-}  *++!D{3} *{\circ}="A";
 \ar@{-} "A";(4,0) \ar@{.} (4,0);(9,0)^*!U{\cdots}
 \ar@{-} (9,0);(13,0) *++!D{3} *{\cdot} *{\circ}="B"
 \ar@{-} "B";(18,0) *{\bullet}="C"
 \ar@{-} "C";(22,0) \ar@{.} (22,0);(27,0)
 \ar@{-} (27,0);(31,0) *{\bullet}="D"
 \ar@{-} "D";(36,0) *++!D{3} *{\cdot} *{\circ}="E"
 \ar@{-} "E";(41,0) \ar@{.} (41,0);(46,0)^*!U{\!\!\!\cdots}
 \ar@{-} (46,0);(51,0) *++!D{3} *{\circ}="F"
 \ar@/_6mm/ @{<->} "A";"F"
 \ar@/_2mm/ @{<->} "B";"E"
\end{xy}\allowdisplaybreaks\\ %%%%%%%%%%%%%%%%%%%%%%%%%
\begin{xy}
 (14,-4) *{B_n^{1,1}:SO(n+1,n)};
 \ar@{-}  *++!D{3} *{\circ} ;
  (5,0)  *++!D{3} *{\circ}="B"
 \ar@{-} "B";(10,0) *++!D{3} *{\circ}="C"
 \ar@{-} "C";(14,0) \ar@{.} (14,0);(19,0)^*!U{\cdots}
 \ar@{-} (19,0);(23,0) *++!D{3} *{\circ}="H"
 \ar@{=>} "H";(28,0) *++!D{3} *{\circ}
\end{xy}
\ \ 
\begin{xy}
 (14,-4) *{B_2^{2m+1,1}:SO(2m+3,2)};
 \ar@{-}  *++!D{3} *{\cdot} *{\circ} ;
  (5,0)  *++!D{3}  *{\circ}="B"
 \ar@{-} "B";(10,0) *++!D{3} *{\bullet}="C"
 \ar@{-} "C";(14,0) \ar@{.} (14,0);(19,0)^*!U{\cdots}
 \ar@{-} (19,0);(23,0) *++!D{3} *{\bullet}="H"
 \ar@{=>} "H";(28,0) *++!D{3} *{\bullet}
\end{xy}
\ \ 
\begin{xy}
 (18,-4) *{BC_n^{2m+1,1}:SO(n+2m+1,n)};
 \ar@{-}  *++!D{3} *{\circ} ;
  (5,0)  *++!D{3} *{\circ}="B"
 \ar@{-} "B";(10,0) *++!D{3} *{\circ}="C"
 \ar@{-} "C";(14,0) \ar@{.} (14,0);(19,0)^*!U{\cdots}
 \ar@{-} (19,0);(23,0) *++!D{3} *{\bullet}="H"
 \ar@{-} "H";(28,0) *++!D{3} *{\bullet}="I"
 \ar@{=>} "I";(33,0) *++!D{3} *{\bullet}
\end{xy}
\allowdisplaybreaks\\ %%%%%%%%%%%%%%%%%%%%%%%%%%%%%%%%%
\begin{xy}
 (14,-4) *{C_n^{1,1}:Sp(n,\mathbb R)};
 \ar@{-}  *++!D{3} *{\circ} ;
  (5,0)  *++!D{3} *{\circ}="B"
 \ar@{-} "B";(10,0) *++!D{3} *{\circ}="C"
 \ar@{-} "C";(14,0) \ar@{.} (14,0);(19,0)^*!U{\cdots}
 \ar@{-} (19,0);(23,0) *++!D{3} *{\circ}="H"
 \ar@{<=} "H";(28,0) *++!D{2} *{\cdot} *{\circ}
\end{xy}
\ \ \ \ 
\begin{xy}
 (16,-4) *{C_n^{4,3}:Sp(n,n)};
 \ar@{-} *{\bullet} ;
  (5,0)  *++!D{3} *{\circ}="B"
 \ar@{-} "B";(10,0) *{\bullet}="C"
 \ar@{-} "C";(15,0) *++!D{3} *{\circ}="D"
 \ar@{-} "D";(19,0) \ar@{.} (19,0);(24,0)^*!U{\cdots}
 \ar@{-} (24,0);(28,0) *{\bullet}="H"
 \ar@{<=} "H";(33,0) *++!D{2} *{\circ}
\end{xy}
\ \ 
\begin{xy}
 (16,-4) *{BC_n^{4m,4,3}:Sp(n+m,n)};
 \ar@{-}  *{\bullet} ;
  (5,0)  *++!D{3} *{\circ}="B"
 \ar@{-} "B";(10,0) *{\bullet}="C"
 \ar@{-} "C";(15,0) *++!D{3} *{\circ}="D"
 \ar@{-} "D";(19,0) \ar@{.} (19,0);(24,0)^*!U{\cdots}
 \ar@{-} (24,0);(28,0) *{\bullet}="H"
 \ar@{<=} "H";(33,0) *{\bullet}
\end{xy}
%%%%%%%%%%%%%%%%%%%%%%%%%%%%
\allowdisplaybreaks\\
\begin{xy}
 (10,-6) *{D_n^{1}:SO(n,n)};
 \ar@{-}  *++!D{3} *{\circ} ;
  (5,0)  *++!D{3} *{\circ}="B"
 \ar@{-} "B";(10,0) *++!D{3} *{\circ}="C"
 \ar@{-} "C";(14,0) \ar@{.} (14,0);(19,0)^*!U{\cdots}
 \ar@{-} (19,0);(23,0) *++!D{3} *{\circ}="H"
 \ar@{-} "H";(27,3) *++!D{2} *{\circ}
 \ar@{-} "H";(27,-3) *++!U{2} *{\circ}
\end{xy}
\ \ 
\begin{xy}
 (14,-6) *{B_2^{2m,1}:SO(2+2m,2)};
 \ar@{-}  *++!D{3} *{\cdot} *{\circ} ;
  (5,0)  *++!D{3} *{\circ}="B"
 \ar@{-} "B";(10,0) *{\bullet}="C"
 \ar@{-} "C";(14,0) \ar@{.} (14,0);(19,0) 
 \ar@{-} (19,0);(23,0) *{\bullet}="H"
 \ar@{-} "H";(27,3) *{\bullet}
 \ar@{-} "H";(27,-3) *{\bullet}
\end{xy}
\ \ 
\begin{xy}
 (16,-6) *{B_n^{2m,1}:SO(n+2m,n)};
 \ar@{-}  *++!D{3} *{\circ} ;
  (5,0)  *++!D{3} *{\circ}="B"
 \ar@{-} "B";(10,0) *++!D{3} *{\circ}="C"
 \ar@{-} "C";(14,0) \ar@{.} (14,0);(19,0)^*!U{\cdots}
 \ar@{-} (19,0);(23,0) *{\bullet}="H"
 \ar@{-} "H";(27,3) *{\bullet}
 \ar@{-} "H";(27,-3) *{\bullet}
\end{xy}
\allowdisplaybreaks\\ 
%%%%%%%%%%%%%%%%%%%%%%%%%%%%%%%
\begin{xy}
 (16,-6) *{B_n^{2,1}:SO(n+2,n)};
 \ar@{-}  *++!D{3} *{\circ} ;
  (5,0)  *++!D{3} *{\circ}="B"
 \ar@{-} "B";(10,0) *++!D{3} *{\circ}="C"
 \ar@{-} "C";(14,0) \ar@{.} (14,0);(19,0)^*!U{\cdots}
 \ar@{-} (19,0);(23,0) *++!D{3}  *{\circ}="H"
 \ar@{-} "H";(27,3)  *++!D{3} *{\circ}="I"
 \ar@{-} "H";(27,-3)  *{\circ}="J"
 \ar@/^/ @{<->} "I";"J"
\end{xy}
\ \ \ \ 
\begin{xy}
 (14,-6) *{BC_n^{4,4,1}:SO^*(4n+2)};
 \ar@{-} *{\bullet} ;
  (5,0)  *++!D{3} *{\circ}="B"
 \ar@{-} "B";(10,0) *{\bullet}="C"
 \ar@{-} "C";(14,0) \ar@{.} (14,0);(19,0)^*!U{\cdots}
 \ar@{-} (19,0);(23,0) *{\bullet}="H"
 \ar@{-} "H";(27,3)  *++!D{3} *{\cdot} *{\circ}="I"
 \ar@{-} "H";(27,-3)  *{\cdot} *{\circ}="J"
 \ar@/^/ @{<->} "I";"J"
\end{xy}
\ \ \ \ 
\begin{xy}
 (16,-6) *{C_n^{4,1}:SO^*(4n)};
 \ar@{-} *{\bullet} ;
  (5,0)  *++!D{3} *{\circ}="B"
 \ar@{-} "B";(10,0) *{\bullet}="C"
 \ar@{-} "C";(14,0) \ar@{.} (14,0);(19,0) %^*!U{\cdots}
 \ar@{-} (19,0);(23,0) *++!D{3} *{\circ}="H"
 \ar@{-} "H";(27,3) *++!D{2}  *{\cdot} *{\circ}
 \ar@{-} "H";(27,-3) *{\bullet}
\end{xy}
\allowdisplaybreaks\\ 
%%%%%%%%%%%%%%%%%%%%%%%%%%%%%
\begin{xy}
 (8,-7) *{E_6^1:EI};
 \ar@{-} (0,0); *++!D{3} *{\circ} ;
  (5,0)  *++!D{4} *{\circ}="B"
 \ar@{-} "B";(10,0) *+!U{5} *{\circ}="C"
 \ar@{-} "C";(10,5) *++!D{3} *{\circ}
 \ar@{-} "C";(15,0) *++!D{4} *{\circ}="D"
 \ar@{-} "D";(20,0) *++!D{3} *{\circ}
\end{xy}
\ \ \ \ 
\begin{xy}
 (8,-7) *{F_4^{2,1}:EII};
 \ar@{-} (0,0); *++!D{6} *{\circ}="A" ;
  (5,0)  *++!D{8} *{\circ}="B"
 \ar@{-} "B";(10,0) *+!L+!D{5} *{\circ}="C"
 \ar@{-} "C";(10,5) *++!D{3} *{\circ}
 \ar@{-} "C";(15,0) *++!D{8} *{\circ}="D"
 \ar@{-} "D";(20,0) *++!D{6} *{\circ}="E"
 \ar@/_4mm/ @{<->} "A";"E"
 \ar@/_/ @{<->} "B";"D"
\end{xy}
\ \ \ \ 
\begin{xy}
 (8,-7) *{BC_2^{8,6,1}:EIII};
 \ar@{-} (0,0); *++!D{6} *{\cdot} *{\circ}="A" ;
  (5,0)  *{\bullet}="B"
 \ar@{-} "B";(10,0) *{\bullet}="C"
 \ar@{-} "C";(10,5) *++!D{3} *{\circ}
 \ar@{-} "C";(15,0) *{\bullet}="D"
 \ar@{-} "D";(20,0) *++!D{6} *{\cdot} *{\circ}="E"
 \ar@/_/ @{<->} "A";"E"
\end{xy}
\ \ \ \ 
\begin{xy}
 (8,-7) *{A_2^8:EIV};
 \ar@{-} (0,0); *++!D{3} *{\circ}="A" ;
  (5,0)  *{\bullet}="B"
 \ar@{-} "B";(10,0) *{\bullet}="C"
 \ar@{-} "C";(10,5)  *{\bullet}
 \ar@{-} "C";(15,0)  *{\bullet}="D"
 \ar@{-} "D";(20,0) *++!D{3} *{\circ}="E"
\end{xy}
\allowdisplaybreaks\\ 
%%%%%%%%%%%%%%%%%%%%%%%%%%%%%
\begin{xy}
 (10,-4) *{E_7^1:EV};
 \ar@{-} (0,0); *++!D{3} *{\circ} ;
  (5,0)  *++!D{5} *{\circ}="B"
 \ar@{-} "B";(10,0) *+!L+!D{7} *{\circ}="C"
 \ar@{-} "C";(10,5) *++!D{4} *{\circ}
 \ar@{-} "C";(15,0) *++!D{6} *{\circ}="D"
 \ar@{-} "D";(20,0) *++!D{5} *{\circ}="E"
 \ar@{-} "E";(25,0) *++!D{4} *{\circ}
\end{xy}
\ \ \ \ 
\begin{xy}
 (10,-4) *{F_4^{4,1}:EVI};
 \ar@{-} (0,0); *++!D{3} *{\circ} ;
  (5,0)  *++!D{5} *{\circ}="B"
 \ar@{-} "B";(10,0) *+!L+!D{7} *{\circ}="C"
 \ar@{-} "C";(10,5) *++!D{4} *{\circ}
 \ar@{-} "C";(15,0) *{\bullet}="D"
 \ar@{-} "D";(20,0) *++!D{5} *{\circ}="E"
 \ar@{-} "E";(25,0) *{\bullet}
\end{xy}
\ \ \ \ 
\begin{xy}
 (10,-4) *{C_3^{8,1}:EVII};
 \ar@{-} (0,0); *++!D{3} *{\circ}="A" ;
  (5,0)  *{\bullet}="B"
 \ar@{-} "B";(10,0) *{\bullet}="C"
 \ar@{-} "C";(10,5) *++!D{4} *{\circ}
 \ar@{-} "C";(15,0) *{\bullet}="D"
 \ar@{-} "D";(20,0) *++!D{6} *{\circ}="E"
 \ar@{-} "E";(25,0) *++!D{4} *{\cdot} *{\circ}
\end{xy}
\allowdisplaybreaks\\ 
%%%%%%%%%%%%%%%%%%%%%%%%%%%%%
\begin{xy}
 (10,-7) *{E_7^1:EVIII};
 \ar@{-} (0,0); *++!D{6} *{\circ} ;
  (5,0)  *++!D{11} *{\circ}="B"
 \ar@{-} "B";(10,0) *+!U{16} *{\circ}="C"
 \ar@{-} "C";(10,5) *++!D{8} *{\circ}
 \ar@{-} "C";(15,0) *++!D{13} *{\circ}="D"
 \ar@{-} "D";(20,0) *++!D{11} *{\circ}="E"
 \ar@{-} "E";(25,0) *++!D{9} *{\circ}="F"
 \ar@{-} "F";(30,0) *++!D{6} *{\circ}
\end{xy}
\ \ \ \ 
\begin{xy}
 (10,-7) *{F_4^{8,1}:EIX};
 \ar@{-} (0,0); *++!D{6} *{\circ}="A" ;
  (5,0)  *{\bullet}="B"
 \ar@{-} "B";(10,0) *{\bullet}="C"
 \ar@{-} "C";(10,5) *{\bullet}
 \ar@{-} "C";(15,0) *{\bullet}="D"
 \ar@{-} "D";(20,0) *++!D{11} *{\circ}="E"
 \ar@{-} "E";(25,0) *++!D{9} *{\circ}="F"
 \ar@{-} "F";(30,0) *++!D{6} *{\circ}
\end{xy}
\allowdisplaybreaks\\ 
%%%%%%%%%%%%%%%%%%%%%%%%%%%%%
\begin{xy}
 (8,-4) *{F_4^{1,1}:FI};
 \ar@{-}  *++!D{3} *{\circ} ;
  (5,0)  *++!D{5} *{\circ}="B"
 \ar@{=>} "B";(10,0) *++!D{8} *{\circ}="C"
 \ar@{-} "C";(15,0) *++!D{6} *{\circ}="D"
\end{xy}
\qquad
\begin{xy}
 (8,-4) *{BC_1^{8,7}:FII};
 \ar@{-}  *{\bullet} ;
  (5,0)   *{\bullet}="B"
 \ar@{=>} "B";(10,0) *{\bullet}="C"
 \ar@{-} "C";(15,0) *++!D{6} *{\circ}="D"
\end{xy}
\qquad
\begin{xy}
 (3,-4) *{G_2^{1}:G_2};
 \ar@3{->}  *++!D{3} *{\circ} ;
  (6,0)   *++!D{5} *{\circ}
\end{xy}
\end{gather*}
\begin{rem}
The restricted root system is shown by the notation in \cite[Appendix]{OS1} such 
as $BC_{n}^{m_1,m_2,m_3}$ and the Lie algebra $\mathfrak m_\Xi$ and its 
complexification for any $\Xi\subset \Psi(\mathfrak a_{\mathfrak p})$ 
can be easily read from the Satake diagram as was explained in 
\cite[Appendix B]{OS2}.
Namely, if $G$ is semisimple, the subdiagram corresponding to 
$\Psi(\Theta)=\{\alpha\in \Psi(\alpha)\,;\,\alpha|_{\mathfrak a_{\mathfrak p}}\in 
\Theta\cup \{0\}\}$ is the Satake diagram of $\mathfrak m_\Xi$.

If $G$ is a connected real form of a simply connected semisimple complex 
Lie group, $M_{\Xi,s}^o$ is a real form of a simply connected complex Lie 
group and $M_\Xi/M_{\Xi}^o$ is isomorphic to the direct sum of 
$(\mathbb Z/2\mathbb Z)^k$ and $\ell$ copies of $U(1)$.
Here $M_{\Xi}^o$ is the identity component of $M_\Xi$,
$\ell$ is the number of the arrows pointing
roots in $\Psi(\mathfrak a)\setminus\Psi(\Theta)$ and
$k$ is the number of the roots in $\Psi(\mathfrak a)\setminus\Psi(\Theta)$ which
are not pointed by any arrow and are not directly linked  by any line
attached to any root $\alpha\in\Psi(\mathfrak a)$ with
$\alpha|_{\mathfrak a_{\mathfrak p}}=0$.
\end{rem}

%%%%%%%%%%%%%%%%%%%%%  SU(p,q) %%%%%%%%%%%%%%%%%%%%%%%%%%%%%%%%%%%

\section{Examples}
In this section we examine in detail 
the differential operators on a homogeneous
line bundle over a Riemannian symmetric space $G/K$ induced
from the two sided ideal given in the preceding section for 
$G=U(p,q),\,Sp(n,\mathbb{R})$ and $GL(n,\mathbb{R})$. 
\subsection{$U(p,q)$}
Let $\sigma$ be the complex linear involution of 
$\mathfrak g = \mathfrak{gl}_{p+q}$ defined by
\[\sigma(X) = I_{p,q}XI_{p,q}\quad\text{ \ with \ } 
  I_{p,q}:=
  \begin{pmatrix}
  I_p & 0\\ 0 & -I_q
  \end{pmatrix}.
\]
Here $1\le q\le p$.
Then $G=U(p,q)
=\{g\in GL(p+q,\mathbb C)\,;\,g=I_{p,q}\overline{{}^tg^{-1}}I_{p,q}\}$
and $K=U(p,q)\cap U(p+q)=U(p)\times U(q)$.

The corresponding Satake diagram and the Dynkin diagram of the restricted root
system are as follows.
\begin{align*}
 &U(p,q)\quad(p=q):
 \begin{xy}
 \ar@{-}  *++!D{\tilde \alpha_1} *{\circ}="A";
 \ar@{-} "A";(4,0) \ar@{.} (4,0);(9,0)^*!U{\!\!\!\cdots}
 \ar@{-} (9,0);(13,0) *++!D{\!\!\tilde\alpha_{q-1}} *{\circ}="B"
 \ar@{-} "B";(18,0) *++!D{\tilde\alpha_q} *{\cdot} *{\circ}="C"
 \ar@{-} "C";(23,0) *++!D{\;\,\tilde\alpha_{q+1}} *{\circ}="D"
 \ar@{-} "D";(27,0) \ar@{.} (27,0);(32,0)^*!U{\!\cdots}
 \ar@{-} (32,0);(36,0) *++!D{\tilde\alpha_{2q-1}} *{\circ}="E"
 \ar@/_5mm/ @{<->} "A";"E"
 \ar@/_/ @{<->} "B";"D"
\end{xy}
&\Longrightarrow
\begin{xy}
 \ar@{-}  *++!D{\alpha_1} *{\circ}="A";
 \ar@{-} "A";(4,0) \ar@{.} (4,0);(9,0)^*!U{\!\!\!\cdots}
 \ar@{-} (9,0);(13,0) *++!D{\!\!\alpha_{q-1}} *{\circ}="B"
 \ar@{<=} "B";(18,0) *++!D{\;\alpha_q} *{\cdot} *{\circ}="C"
\end{xy}\\
 &U(p,q)\quad(p>q):
\begin{xy}
 \ar@{-}  *++!D{\tilde\alpha_1} *{\circ}="A";
 \ar@{-} "A";(4,0) \ar@{.} (4,0);(9,0)^*!U{\!\!\!\cdots}
 \ar@{-} (9,0);(13,0) *++!D{\tilde\alpha_{q-1}} *{\cdot} *{\circ}="B"
 \ar@{-} "B";(18,0) *{\bullet}="C"
 \ar@{-} "C";(22,0) \ar@{.} (22,0);(27,0)^*!U{\!\!\!\cdots}
 \ar@{-} (27,0);(31,0) *{\bullet}="D"
 \ar@{-} "D";(36,0) *++!D{\tilde\alpha_{p-1}} *{\cdot} *{\circ}="E"
 \ar@{-} "E";(41,0) \ar@{.} (41,0);(46,0)^*!U{\!\!\!\cdots}
 \ar@{-} (46,0);(51,0) *++!D{\tilde\alpha_{p+q-1}} *{\circ}="F"
 \ar@/_6mm/ @{<->} "A";"F"
 \ar@/_2mm/ @{<->} "B";"E"
\end{xy}
&\Longrightarrow
\begin{xy}
 \ar@{-}  *++!D{\alpha_1} *{\circ}="A";
 \ar@{-} "A";(4,0) \ar@{.} (4,0);(9,0)^*!U{\!\!\!\cdots}
 \ar@{-} (9,0);(13,0) *++!D{\!\!\alpha_{q-1}} *{\circ}="B"
 \ar@{=>} "B";(18,0) *++!D{\;\alpha_q} *{\cdot} *{\circ}="C"
\end{xy}\\
\end{align*}
In this subsection, we restrict ourselves to a parabolic 
subgroup $P_\Xi$ that 
satisfies (\ref{eq:4.48}) for $\ell\not=0$. This is the case 
$\Xi\subset \Psi(\mathfrak{a}_\mathfrak{p})\setminus \{\alpha_q\}$. 
We fix $L+1$ non-negative integers $0=n_0<n_1<\cdots<n_L=q$ and 
put $\Xi=\{\alpha_i\,;\,i\in\{1,\dots,q\}\setminus\{n_1,n_2,\dots,n_L\}\}$.
Then 
\[
 \Theta=\{\tilde\alpha_\nu\,;\,q-1<\nu<p-1\}
 \cup\bigcup_{j=1}^L\{\tilde\alpha_\nu,\tilde\alpha_{p+q-\nu}\,;\,
   n_{j-1}<\nu<n_j\}.
\]
We have 
\begin{equation}
 \mathfrak m_{\Xi}+\mathfrak a_\Xi\simeq
 \mathfrak{gl}_{n_1}\oplus\mathfrak{gl_{n_2-n_1}}\oplus\cdots
  \oplus\mathfrak{gl}_{n_L-n_{L-1}}\oplus\mathfrak u(p-q).
\end{equation}
In particular, if $L=1$, then $\Xi=\Psi(\mathfrak{a}_\mathfrak{p})\setminus 
\{\alpha_q\}$ and 
 $G/P_\Xi$ is the Shilov boundary of $G/K$.  

We examine the system of differential equations characterizing the image of the
Poisson transform $\mathcal P_{\Xi,\mu}^\ell$ of the space of hyperfunction 
sections over the boundary $G/P_\Xi$ of $G/K$. 
If $p>q$, then the differential operators corresponding to the 
generators of the two sided ideal of $U(\mathfrak{g})$ given 
by the  minimal polynomial  described 
in Section~3 is of  order $2L+1$, but we will show 
that the image of the Poisson transform 
can be characterized by operators of order at most $2L$, 
by reducing the operators in the two sided ideals modulo 
$\sum_{X\in\mathfrak{k}}U(\mathfrak{g})(X+\chi_\ell(X))$ and 
taking a $K$-invariant left ideal (cf.\ Theorem~\ref{thm:upq}, 
Corollary~\ref{cor:upq}). 
For $L=1$ these second order differential operators 
are the Hua operators (cf.\ Remark~\ref{rem:upq}). 

Let $i, j, k, \ell, \mu$ and $\nu$ are indices which satisfy
\[
 1\le i,\ j\le q\quad\text{and}\quad q<k,\ \ell\le p
\quad\text{and}\quad 1\le\mu,\ \nu\le p
\]
and put
\[
 \bar i=p+q+1-i,\quad \bar j = p+q+1-j.
\] 

Put
\[
 \mathfrak k_{\mathbb C} = \sum_{\mu, \nu}\mathbb CE_{\mu,\nu} + \sum_{i,j}\mathbb CE_{\bar i,\bar j}
\]
and
\[
 \mathfrak a_{\mathbb C} = \sum_{i=1}^q\mathbb CE_i
\text{ \ with \ }
 E_i = E_{i,\bar i} + E_{\bar i,i}.
\]
Let $e_i\in\mathfrak a_{\mathbb C}^*$ defined by
\[
  e_i(E_j) = \delta_{i,j}.
\]
Define
\begin{align*}
 Y_i       &= -E_{i,i} + E_{i,\bar i}  - E_{\bar i,i} + E_{\bar i,\bar i},\\
 Y_{i,k}   &= E_{i,k} + E_{\bar i,k},\qquad
 Y_{k,i}   = E_{k,i} - E_{k, \bar i},\\
 Y_{i,j,+} &= E_{i,j} + E_{\bar i,j} - E_{i,\bar j} - E_{\bar i,\bar j}
 \quad\text{for }i\ne j,\\
 Y_{i,j,1} &= E_{i,j} + E_{\bar i,j} + E_{i,\bar j} + E_{\bar i,\bar j}
 \quad\text{for }i<j,\\
 Y_{i,j,2} &= E_{j,i} - E_{\bar j,i} - E_{j,\bar i} + E_{\bar j,\bar i}
 \quad\text{for }i<j
\end{align*}
and let $\mathfrak n_\mathbb C$ be the nilpotent subalgebra of 
$\mathfrak{gl}_{p+q}$
spanned by $Y_i$, $Y_{i,k}$, $Y_{i,j,+}$ with $i\ne j$, $Y_{i,j,1}$ and
$Y_{i,j,2}$ with $i<j$.
Then $\mathfrak{gl}_{p+q}=\mathfrak g_{\mathbb C}=
\mathfrak k_{\mathbb C}+\mathfrak a_{\mathbb C}+\mathfrak n_{\mathbb C}$
is the complexification of the Iwasawa decomposition 
$\mathfrak u(p,q)=\mathfrak k+\mathfrak a_{\mathfrak p}+\mathfrak n$ of 
the Lie algebra of $U(p,q)$.

For a polynomial $f(x)$ we will examine $\mathbb F\in 
M\bigl(p+q,U(\mathfrak g)\bigr)$ 
defined by $\mathbb F=f(\mathbb E)$ with 
$\mathbb E=(E_{i,j})\in M\bigl(p+q,U(\mathfrak g)\bigr)$.
Note that $V_f=\sum_{i,j}\mathbb C F_{i,j}$ is 
a $\mathfrak g$-module by the adjoint action of $\mathfrak g$ and 
it is decomposed into 4 $\mathfrak k$-submodules.
\begin{equation}
  V_{f}
 =\bigoplus_{\epsilon_1,\epsilon_2=0,1} V_{f}^{\epsilon_1,\epsilon_2}
\text{ \ with \ }
 V_{f}^{\epsilon_1,\epsilon_2}:=
 \sum_{\substack{p\epsilon_1<i\le p+q\epsilon_1\\
       p\epsilon_2<j\le p+q\epsilon_2}}
 \mathbb C F_{i,j}.
\end{equation}
We will calculate $\gamma_\ell(F_{i,j})$ (cf.~\eqref{eq:4.36})
for $\mathbb F=(F_{i,j})$ to get $V_{f}^{\epsilon_1,\epsilon_2}$
killing the image of the Poisson transform $\mathcal P_{\Xi,\mu}^\ell$.
A similar calculation was done in the proof of \cite[Proposition~3.4]{O6}.
The polynomial $f(x)$ so that $V_f$ characterizes the image of
$\mathcal P_{\Xi,\mu}^\ell$ is given in the preceding section and then
the degree of $f(x)$ which is the maximal order of the elements of $V_f$ 
equals $2L+1$ or $2L$ if $p>q$ or $p=q$, respectively.
It happens that $V_{f}$ doesn't kill the image but 
$V_f^{\epsilon_1,\epsilon_2}$ does so for suitable $f(x)$ and 
$(\epsilon_1,\epsilon_2)$ and then we will get the system of 
differential equations of order $\le 2L$ characterizing the image also 
in the case $p>q$.

Note that for $H\in \mathfrak a_{\mathbb C}$
\begin{align*}
 [H, Y_i] &= 2e_i(H)Y_i,\\
 [H, Y_{i,k}] &= e_i(H)Y_{i,k},&
 [H, Y_{k,i}] &= e_i(H)Y_{k,i},\\
 [H, Y_{i,j,+}] &= (e_i+e_j)(H)Y_{i,j,+},\\
 [H, Y_{i,j,1}] &= (e_i-e_j)(H)Y_{i,j,1},&
 [H, Y_{i,j,2}] &= (e_i-e_j)(H)Y_{i,j,2}.
\end{align*}
Then the root system $\Sigma(\mathfrak a_{\mathfrak p})$ is 
of type $BC_q^{2(p-q),2,1}$ and
\begin{align*}
\Psi(\mathfrak a_{\mathfrak p})&=\{e_1-e_2,e_2-e_3,\dots,e_{q-1}-e_q,e_q\},
\allowdisplaybreaks\\
\rho&=(p+q-1)e_1+(p+q-2)e_2+\cdots+(p-q+1)e_q,\\
 E_{i,\bar i} &= \frac12 E_i + \frac12 Y_i + \frac12(E_{i,i} - E_{\bar i,\bar i}),\\
 E_{\bar i,i} &= \frac12 E_i - \frac12 Y_i - \frac12(E_{i,i} - E_{\bar i,\bar i}),\\
 E_{k,\bar i} &= -Y_{k,i} + E_{k,i},\qquad
 E_{\bar i,k} = Y_{i,k} - E_{i,k},\\
 E_{i,\bar j} &= \frac12(Y_{i,j,1} - Y_{i,j,+}) - E_{\bar i,\bar j}
 \quad\text{for }i<j,\\
 E_{\bar i, j} &= \frac12(Y_{i,j,1} + Y_{i,j,+}) - E_{i,j}
 \quad\text{for }i<j,\\
 E_{i,\bar j} &= -\frac12(Y_{i,j,+} + Y_{j,i,2}) + E_{i,j}
 \quad\text{for }i>j,\\
 E_{\bar i,j} &= \frac12(Y_{i,j,+} - Y_{j,i,2}) + E_{\bar i,\bar j}
 \quad\text{for }i>j.
\end{align*}

Suppose $F_{a,b}\in U(\mathfrak g)$ for $1\le a,b\le p+q$ satisfy
\[
 [E_{i,j}, F_{a,b}] = \delta_{j,a}F_{i,b} - \delta_{i,b}F_{a,j}
 \quad\text{for }1\le i,\ j,\ a,\ b\le p+q.
\]
Fix $s$, $t\in\mathbb C$ and let 
$\tau_{s,t}$ be the one dimensional representation of
$\mathfrak k_{\mathbb C}$ with $\tau_{s,t}(E_{\mu,\nu}) 
=\tau_{s,t}(E_{\bar i,\bar j}) = 0$ if
$\mu\ne\nu$ and $i\ne j$ and $\tau_{s,t}(E_{\nu,\nu})=s$ 
and $\tau_{s,t}(E_{\bar i,\bar i}) = t$.
Note that $\chi_\ell(X)=\tau_{s,t}(X)$ with $\ell=s-t$ 
for $X\in\mathfrak{k}_\mathbb{C}$ with $\trace X=0$. 
Put $$\tilde F_{u,v} = \sum_{w=1}^{p+q}E_{u,w}F_{w,v}.$$
Considering in modulo $\mathfrak n_{\mathbb C}U(\mathfrak g) + \sum_{X\in\mathfrak k_{\mathbb C}}
U(\mathfrak g)(X - \tau_{s,t}(X))$, we have
\begin{align*}
\allowdisplaybreaks
\tilde  F_{i,a} &= \sum_\nu E_{i,\nu}F_{\nu,a} + \sum_{i>j} E_{i,\bar j}F_{\bar j,a}
            + E_{i, \bar i}F_{\bar i,a} + \sum_{i<j} E_{i,\bar j}F_{\bar j,a}\\
          &\equiv \sum_{\nu}(F_{i,a}-\delta_{ia}F_{\nu,\nu})
            + sF_{i,a}
            + \sum_{i>j} E_{i,j}F_{\bar j,a}
            + \frac12(E_i + E_{i,i} - E_{\bar i,\bar i})F_{\bar i,a}\\
          &\quad
            -\sum_{i<j}E_{\bar i,\bar j}F_{\bar j,a}\\
          &\equiv (p+s)F_{i,a} - \delta_{ia}\sum_\nu F_{\nu,\nu}
                      - \delta_{ia}\sum_{i>j}F_{\bar j,j}
            + \frac{E_i + s-t-1}2F_{\bar i, a}
            - \frac12\delta_{ia}F_{\bar i,i}\\
          &\quad
            + \frac12\delta_{\bar ia}F_{\bar i,\bar i}
            - \sum_{i<j}(F_{\bar i,a} - \delta_{\bar ia}F_{\bar j,\bar j}),\\
 \tilde F_{k,a} &= \sum_\nu E_{k,\nu}F_{\nu,a} + \sum_{j} E_{k,\bar j}F_{\bar j,a}\\
          &\equiv\sum_\nu (F_{k,a} - \delta_{ka}F_{\nu,\nu})
           + sF_{k,a}
           + \sum_j E_{k,j}F_{\bar j,a}\\
          &= (p+s)F_{k,a} - \delta_{ka}\sum_\nu F_{\nu,\nu}
            - \sum_j \delta_{ka}F_{\bar j,j},\\
\tilde  F_{\bar i,a} 
          &= \sum_{i>j} E_{\bar i,j}F_{j,a} + E_{\bar i,i}F_{i,a}
            + \sum_{i<j} E_{\bar i,j}F_{j,a}
            + \sum_k E_{\bar i,k}F_{k,a}
            + \sum_j E_{\bar i, \bar j}F_{\bar j,a}\\
          &\equiv \sum_{i>j} E_{\bar i,\bar j}F_{j,a}
            + \frac12(E_i - E_{i,i} + E_{\bar i,\bar i})F_{i,a}
            - \sum_{i<j}E_{i,j}F_{j,a}
            - \sum_k E_{i,k}F_{k,a}\\
          &\quad
            + \sum_j(F_{\bar i,a} - \delta_{\bar i,a}F_{\bar j,\bar j})
            + tF_{\bar i,a}\\
         &\equiv 
            tF_{\bar i,a} -\delta_{\bar i,a}\sum_{i > j}F_{j,\bar j}
            + \frac{E_i - s + t - 1}2F_{i,a} - \frac12\delta_{\bar i,a}F_{i,\bar i}
            + \frac12\delta_{i,a}F_{i,i}\\
         &\quad
            - \sum_{i<j}(F_{i,a}  - \delta_{i,a}F_{j,j})
            - \sum_k(F_{i,a}- \delta_{i,a}F_{k,k}) 
            + \sum_j(F_{\bar i,a}
            - \delta_{\bar i,a}F_{\bar j,\bar j}).
\end{align*}
Suppose
\[
 F_{a, b} = 0\quad\text{if}\quad |a-b|\ne 0,\ p. 
\]
Then we have
\[
 \tilde F_{a, b} = 0\quad\text{if}\quad |a-b|\ne 0,\ p
\]
and
\begin{align*}
\allowdisplaybreaks
 \tilde F_{i,i} &= (p+s)F_{i,i} - \sum_{\nu=1}^pF_{\nu,\nu}
                   - \sum_{j<i}F_{\bar j,j} 
                   + \left(\frac{E_i + s - t}2 - q + i- 1\right)F_{\bar i,i}\\
                  &=sF_{i,i}
                   + \left(\frac{E_i + s - t}2 - q\right)F_{\bar i,i}
                   -  \sum_{\nu=1}^p(F_{\nu,\nu} - F_{i,i})
                   - \sum_{j=1}^{i-1}(F_{\bar j,j} - F_{\bar i.i}),\\
 \tilde F_{i,\bar i} &= (p+s)F_{i,\bar i}
                    + \left(\frac{E_i + s-t}2 - q + i\right)F_{\bar i,\bar i}
                    + \sum_{i<j}F_{\bar j, \bar j}\\
                    &= (p+s)F_{i,\bar i}
                    + \frac{E_i + s-t}2F_{\bar i,\bar i}
                    + \sum_{j=i+1}^q(F_{\bar j, \bar j} - F_{\bar i, \bar i}),\\
 \tilde F_{k,k} &= (p+s)F_{k,k} - \sum_{\nu =1}^pF_{\nu,\nu} 
                   - \sum_{j=1}^qF_{\bar j,j}\\
                &= sF_{k,k} - \sum_{j=1}^qF_{\bar j,j}
                    - \sum_{\nu =1}^p(F_{\nu,\nu} - F_{k,k}),\\
 \tilde F_{\bar i, i} &= \left(\frac{E_i-s+t}2 - p + i\right)F_{i,i} 
                   + \sum_{\nu=i+1}^pF_{\nu,\nu}
                   + (q+t)F_{\bar i,i}\\
                 &= (q+t)F_{\bar i,i}
                   + \frac{E_i-s+t}2F_{i,i}
                   + \sum_{\nu=i+1}^p( F_{\nu,\nu} - F_{i,i}),\\
 \tilde F_{\bar i,\bar i} &= -\sum_{j<i}F_{j,\bar j}
                   + \left(\frac{E_i - s + t}2 - p  + i - 1\right)F_{i,\bar i}
                   - \sum_{j\ne i}F_{\bar j,\bar j}
                   + (q+t-1)F_{\bar i,\bar i},\\
                &= tF_{\bar i,\bar i}
                   +  \left(\frac{E_i - s + t}2 - p\right)F_{i,\bar i}
                   - \sum_{j=1}^q(F_{\bar j, \bar j} - F_{\bar i,\bar i})
                   - \sum_{j=1}^{i-1}(F_{j,\bar j} - F_{i,\bar i}).
\end{align*}

Put
\[
 F_{i,\bar i}^1 = \frac{E_i + s - t}2,\quad
 F_{\bar i,i}^1 = \frac{E_i - s + t}2,\quad
 F_{i,i}^1 = F_{k,k}^1 = s + \lambda_1\quad
 \text{and}\quad
 F_{\bar i,\bar i}^1 = t + \lambda_1.
\]
Suppose $(u,v)$ are in $\{(i,i),\ (i, \bar i),\ (\bar i,i),\ (\bar i, \bar i),\ (k,k)\}$
and $F_{u,v}^{m-1}$ are defined.
By putting $F_{u,v} = F_{u,v}^{m-1}$, define $\tilde F_{u,v}^{m-1} = \tilde F_{u,v}$ by the above equations and moreover
\[
  F_{u,v}^m = \tilde F_{u,v}^{m-1} + \lambda_m F_{u,v}^{m-1}
  \in U(\mathfrak a_{\mathfrak p}).
\]
Thus we inductively define $F_{u,v}^m$.
Note that
\begin{align*}
  \bigl(F^m_{a,b}\bigr)
   _{\substack{1\le a\le p+q\\1\le b\le p+q}}
  &\equiv
  \prod_{j=1}^m\Bigl(
  \bigl(E_{a,b}+\lambda_j\delta_{a,b}\bigr)
   _{\substack{1\le a\le p+q\\1\le b\le p+q}}\Bigr)
  \\
  F_{i, \bar i}^{m} &= (\lambda_m + p + s)F_{i, \bar i}^{m-1}
       + \frac{E_i + s - t}2F_{\bar i,\bar i}^{m-1}
       + \sum_{j=i+1}^q
         (F_{\bar j, \bar j}^{m-1}-F_{\bar i, \bar i}^{m-1}),\\
 F_{\bar i, \bar i}^m &= 
        (t + \lambda_m)F_{\bar i, \bar i}^{m-1}
         - \sum_{j=1}^q
        (F_{\bar j, \bar j}^{m-1} - F_{\bar i, \bar i}^{m-1})\\
       &\quad
       + \left(\frac{E_i - s + t}2 - p\right)F_{i,\bar i}^{m-1}
       - \sum_{j=1}^{i-1}(F_{j, \bar j}^{m-1} - F_{i,\bar i}^{m-1}).\\
\end{align*}
%%%%%%%%%%%%%%%%%%
Putting 
\[
  F_{\pm i}^m  = F_{\bar i,\bar i}^m \pm F_{i,\bar i}^m,
\]
we have
\begin{align*}
  2F_{\pm i}^m &= (\lambda_m + p + s)(F_{\pm i}^{m-1} - F_{\mp i}^{m-1})
       \pm\frac{E_i + s - t}2(F_{\pm i}^{m-1} + F_{\mp i}^{m-1})\\
       &\quad
       \pm\sum_{j=i+1}^q(F_{\pm j}^{m-1} - F_{\pm i}^{m-1})
       \pm\sum_{j=i+1}^q(F_{\mp j}^{m-1} - F_{\mp i}^{m-1})\\
       &\quad+(t+\lambda_m)(F_{\pm i}^{m-1} + F_{\mp i}^{m-1})
       -\sum_{j=1}^q(F_{\pm j}^{m-1} - F_{\mp i}^{m-1})
       -\sum_{j=1}^q(F_{\pm j}^{m-1} - F_{\mp i}^{m-1})\\
       &\quad\pm\left(\frac{E_i - s + t}2 - p\right)
        (F_{\pm i}^{m-1} - F_{\mp i}^{m-1})\\
       &\quad\mp\sum_{j=1}^{i-1}(F_{\pm j}^{m-1} - F_{\pm i}^{m-1})
       \pm\sum_{j=1}^{i-1}(F_{\mp j}^{m-1} - F_{\mp i}^{m-1})
\end{align*}
and
\begin{align*}
  F_i^m &= \left(\lambda_m + \frac{E_i + s + t}2\right)F_i^{m-1}
           - \sum_{j=1}^{i-1}(F_j^{m-1} - F_i^{m-1}),\\
 F_{-i}^m &= \left(\lambda_m + p - \frac{E_i + s + t}2\right)F_{-i}^{m-1} 
           - (p+s-t)F_i^{m-1} - \sum_{j=i+1}^q(F_{-j}^{m-1} - F_{-i}^{m-1}).
\end{align*}
%%%%%%%%%%%%%%%%
For $0=n_0<n_1<\cdots<n_L=q$ and $(\mu_1,\ldots,\mu_L)\in\mathbb C^L$
put
\[
  E_i = 2\mu_\ell\quad\text{if there exists $\ell$ with}\quad\
  n_{\ell-1}<i\le n_\ell
\]
and
\begin{align}
  \lambda_k &=
   \begin{cases}
      -\mu_k -\dfrac{s + t}2 - n_{k-1}
          &\quad\text{if}\quad k\le L,\\
       \mu_{2L + 1 - k} -\dfrac{s+t}2 - p + n_{2L + 1 - k}
          &\quad\text{if}\quad L < k \le 2L.\
  \end{cases}\\
 f(x) &= \prod_{j=1}^{2L}\bigl(x+\lambda_k\bigr)
   =\prod_{k=1}^L\bigl(x-\mu_k -\tfrac{s + t}2 - n_{k-1}\bigr)
    \bigl(x+\mu_k -\tfrac{s+t}2 - p + n_k\bigr).
\end{align}
Then for $i>0$ inductively we can prove
\[
  F_i^m = 0\quad\text{if}\quad m\ge L\quad\text{or}\quad i \le n_m
\]
and moreover by the induction for $i=q$, $q-1,\ldots, 1$ 
\[
 F_{i,\bar i}^m = 0
 \quad\text{if}\quad m>L\quad\text{and}\quad i > n_{2L - m}.
\]
In particular we have $F_{i\bar i}^{2L} = 
F_{\bar i,\bar i}^{2L} = 0$ for $i=1,\ldots,q$ and hence
$F_{a,b}^{2L}=0$ for $a=1,\dots,p+q$ and $b=p+1,\dots,p+q$,

Note that when $p=q$, the same argument as above proves 
$F_{a,b}^{2L}=0$ also for $a=1,\dots,p+q$ and $b=1,\dots,p$.
%%%%%
\begin{lem}
Suppose $M = \Big(M_{ij}\Big)\substack{1\le i\le p+q\\1\le j\le p+q}
         \in M\bigl(p+q, U(\mathfrak g)\bigr)$
satisfies
\[
  [E_{ij}, M_{k\ell}] = \delta_{jk}M_{i\ell} - \delta_{\ell i}M_{k\ell}.
\]
Put $\tilde M = M\Big(E_{ij} + \lambda \delta_{ij}\Big)
    _{\substack{1\le i\le p+q\\1\le j\le p+q}}$
and $\tilde M' = \Big(E_{ij} + \lambda \delta_{ij}\Big)M$.
Then
\begin{align*}
   \tilde M_{a\nu}&\equiv 
                  M_{a \nu}\bigl(\lambda + s + q\bigr)
                  \mod \sum_{X\in\mathfrak k}U(\mathfrak g)(X-\chi_{s,t}(X))
                  + \sum_{\substack{1\le b \le p+q\\
                    p<c\le p+q}}
                   U(\mathfrak g)M_{bc}\\
    &\qquad\qquad\qquad\ \,\qquad\qquad\qquad\qquad
     \text{for}\quad 1\le a\le p+q,\ 1\le \nu\le p,\\
   \tilde M_{a\nu}'&\equiv 0
   \mod \sum_{\substack{1\le b \le p+q\\
                    p<c\le p+q}}
                   U(\mathfrak g)M_{bc}
   \qquad\text{for}\quad 1\le a\le p+q,\ p < \nu\le p+q.
\end{align*}
\end{lem}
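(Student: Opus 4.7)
My plan is to prove both congruences by expanding each entry and reducing modulo the two left ideals using only the commutation relation (which says that $\mathbb M$ transforms by conjugation under $\Ad$) and the definition of $\tau_{s,t}$. The second congruence is immediate: for $p<\nu\le p+q$ every summand of
\[
\tilde M'_{a\nu}=\lambda M_{a\nu}+\sum_{w=1}^{p+q}E_{aw}M_{w\nu}
\]
has some $M_{w\nu}$ (or $M_{a\nu}$ itself) as a right factor, and these all lie among the generators $M_{bc}$ with $c>p$ of the second ideal.

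For the first congruence I would start from
\[
\tilde M_{a\nu}=\lambda M_{a\nu}+\sum_{w\le p}M_{aw}E_{w\nu}+\sum_{w>p}M_{aw}E_{w\nu}
\]
and treat the two sums differently according to which ideal absorbs the error. For $w\le p$ the matrix unit $E_{w\nu}$ lies in $\mathfrak k_{\mathbb C}$ and $\tau_{s,t}(E_{w\nu})=s\delta_{w\nu}$, so writing
\[
M_{aw}E_{w\nu}=M_{aw}\bigl(E_{w\nu}-\tau_{s,t}(E_{w\nu})\bigr)+s\delta_{w\nu}M_{aw}
\]
throws the first piece into $\sum_{X\in\mathfrak k}U(\mathfrak g)(X-\tau_{s,t}(X))$, and the diagonal term collapses after summation to $sM_{a\nu}$.

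For $w=\bar j>p$ the factor $M_{a\bar j}$ is itself a generator of the second ideal. I would pull $E_{\bar j,\nu}$ past it using the commutation hypothesis to obtain
\[
M_{a\bar j}E_{\bar j,\nu}=E_{\bar j,\nu}M_{a\bar j}-\delta_{\nu a}M_{\bar j\bar j}+M_{a\nu}.
\]
The first two terms lie in $\sum_{b,\,c>p}U(\mathfrak g)M_{bc}$ (both $M_{a\bar j}$ and $M_{\bar j\bar j}$ are generators because their second index exceeds $p$; the case $a>p$ kills the middle term anyway), and summing the last term over the $q$ indices $\bar j$ contributes $qM_{a\nu}$. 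Adding the three pieces gives $\tilde M_{a\nu}\equiv(\lambda+s+q)M_{a\nu}$, as required.

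The whole argument is bookkeeping and I foresee no genuine obstacle. The point most worth watching is that each ``error'' term, after commutator moves, really lies in the correct \emph{left} ideal; this is automatic because the factors $M_{aw}$, $M_{a\bar j}$, $E_{\bar j,\nu}$ that end up on the left are elements of $U(\mathfrak g)$, so multiplying a generator on the left by anything keeps us inside the ideal.
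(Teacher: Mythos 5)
Your argument is correct and is essentially the paper's own proof: the paper likewise expands $\tilde M_{a\nu}=\sum_b M_{ab}(E_{b\nu}+\lambda\delta_{b\nu})$, absorbs the terms with $b\le p$ into $\sum_{X\in\mathfrak k}U(\mathfrak g)(X-\chi_{s,t}(X))$ using $\chi_{s,t}(E_{b\nu})=s\delta_{b\nu}$, and reduces the $q$ terms with $b>p$ to $qM_{a\nu}$ by the same commutator move, the second congruence being immediate since every summand of $\tilde M'_{a\nu}$ ends in some $M_{w\nu}$ with $\nu>p$. (You correctly read the commutation hypothesis as the standard adjoint action $[E_{ij},M_{k\ell}]=\delta_{jk}M_{i\ell}-\delta_{\ell i}M_{kj}$, which is what the lemma intends.)
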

%%%
\begin{pf} If $1\le\nu\le p$, then
\begin{align*}
\tilde M_{a\nu}^{m+1} &= \sum_{b=1}^{p+q}M_{a b}^m
                   (E_{b\nu}+\lambda\delta_{b\nu})\\
                 &\equiv
                     M_{a \nu}^m(\lambda + s)
                    + \sum_{b=p+1}^{p+q}M_{ab}^mE_{b\nu}
                  \mod \sum_{X\in\mathfrak k}U(\mathfrak g)(X-\chi_{s,t}(X))\\
                 &\equiv
                    M_{a \nu}^m(\lambda + s + q)
                  + \sum_{X\in\mathfrak k}U(\mathfrak g)\bigl(X-\chi_{s,t}(X)\bigr)
                  + \sum_{\substack{1\le b \le p+q\\
                    p<c\le p+q}}
                   U(\mathfrak g)M_{bc}^m.
\end{align*}
The latter relation is clear.
\qed
\end{pf}

Thus we have the following theorem.
\begin{thm}\label{thm:upq}
Put $\mathbb E=(E_{i,j})_{\substack{1\le i\le p+q\\1\le j\le p+q }}
\in M\bigl(p+q,\mathfrak g\bigr)$ and define 
\begin{equation}
  \tilde f(x) = (x-s-q)\prod_{k=1}^L
    \bigl(x-\mu_k -\tfrac{s + t}2 - n_{k-1}\bigr)
    \bigl(x+\mu_k -\tfrac{s+t}2 - p + n_k\bigr)
\end{equation}
and put
\begin{align}
  I^0_{\Xi}(\mu,s,t)
    &:=\sum_{1\le i\le p+q,\ p< j\le p+q}U(\mathfrak g)f(\mathbb E)_{i,j}
      =U(\mathfrak g)V_f^{0,1}+U(\mathfrak g)V_f^{1,1},\\
  I_{\Xi}(\mu,s,t)
    &:=\sum_{1\le i\le p+q,\ 1\le j\le p+q}U(\mathfrak g)f(\mathbb E)_{i,j}
      =U(\mathfrak g)V_f,\\
  \tilde I_{\Xi}(\mu,s,t)
    &:=\sum_{1\le i\le p+q,\ 1\le j\le p+q}U(\mathfrak g)\tilde f(\mathbb E)_{i,j}
      =U(\mathfrak g)V_{\tilde f}.
\end{align}
Then $I^0_{\Xi}(\mu,s,t)$ is a left ideal of 
$U(\mathfrak g)$ satisfying
\begin{equation}\label{eq:Upq}
 \begin{split}
 &D\equiv 0 \mod \mathfrak n_{\mathbb C} U(\mathfrak g)
  +\sum_{X\in\mathfrak k}U(\mathfrak g)\bigl(X-\chi_{s,t}(X)\bigr)\\
 &\qquad + 
   \sum_{i=1}^L\sum_{\nu=n_0+\cdots+n_{i-1}+1}^{n_0+\cdots+n_i}
   U(\mathfrak g)(E_\nu-2\mu_i) \quad(\forall D\in I^0_{\Xi}(\mu,s,t))
 \end{split}
\end{equation}
and $\tilde I_{\Xi}(\mu,s,t)$ is a two-side ideal of $U(\mathfrak g)$ satisfying
\begin{equation}
  \tilde I_{\Xi}(\mu,s,t)\subset I^0_{\Xi}(\mu,s,t) + 
  \sum_{X\in\mathfrak k}U(\mathfrak g)\bigl(X-\chi_{s,t}(X)\bigr).
\end{equation}
If $p=q$, the left ideal $I^0_{\Xi}(\mu,s,t)$ in the claim \eqref{eq:Upq}
may be replaced by the two sided ideal $I_{\Xi}(\mu,s,t)$.
\end{thm}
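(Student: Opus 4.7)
The plan is to assemble the statement from the ingredients already in place just before it: the inductive formulas for $F^m_{u,v}$, the resulting vanishings, the adjoint equivariance of $q(\mathbb E)$, and the preparatory Lemma on multiplication by $E_{ij}+\lambda\delta_{ij}$. That $I^0_\Xi(\mu,s,t)$ is a left ideal is immediate from its definition. The substantive content of the first assertion is the congruence \eqref{eq:Upq}; to handle it I would pass to the quotient
\[
 R:=U(\mathfrak g)\Bigm/\Bigl(\mathfrak n_{\mathbb C}U(\mathfrak g)+\sum_{X\in\mathfrak k}U(\mathfrak g)(X-\chi_{s,t}(X))+\sum_{i=1}^L\sum_{\nu=n_{i-1}+1}^{n_i}U(\mathfrak g)(E_\nu-2\mu_i)\Bigr),
\]
in which $E_\nu$ becomes $2\mu_i$ for $n_{i-1}<\nu\le n_i$. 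By the reductions of $\tilde F_{i,a}$, $\tilde F_{k,a}$, $\tilde F_{\bar i,a}$ carried out above, the residue of $f(\mathbb E)_{u,v}$ in $R$ coincides with the element $F^{2L}_{u,v}$ produced by the recursion. The recursion forces $F^m_i=0$ for $m\ge L$ or $i\le n_m$, and the subsequent induction $i=q,q-1,\dots,1$ yields $F^{2L}_{i,\bar i}=F^{2L}_{\bar i,\bar i}=0$. Hence $f(\mathbb E)_{a,b}\equiv 0$ in $R$ whenever $p<b\le p+q$, which is exactly \eqref{eq:Upq}.

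For the remaining assertions I invoke the adjoint equivariance $\Ad(g)\bigl(q(\mathbb E)\bigr)={}^tg\,q(\mathbb E)\,g^{-1}$ from Section~3: applied to $q=\tilde f$ it shows that the $\mathbb C$-span $V_{\tilde f}$ of the entries of $\tilde f(\mathbb E)$ is $\Ad(G)$-stable, so $U(\mathfrak g)V_{\tilde f}=\tilde I_\Xi(\mu,s,t)$ is automatically two-sided. For the inclusion $\tilde I_\Xi(\mu,s,t)\subset I^0_\Xi(\mu,s,t)+\sum_{X\in\mathfrak k}U(\mathfrak g)(X-\chi_{s,t}(X))$, I would factor $\tilde f(x)=f(x)(x-s-q)=(x-s-q)f(x)$ and apply the preceding Lemma with $M=f(\mathbb E)$ and $\lambda=-(s+q)$. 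For columns $j\le p$ the right-multiplication form (first part of the Lemma) has principal coefficient $\lambda+s+q=0$, so $\tilde f(\mathbb E)_{i,j}$ lies in $I^0_\Xi(\mu,s,t)+\sum_{X\in\mathfrak k}U(\mathfrak g)(X-\chi_{s,t}(X))$; for columns $j>p$ the left-multiplication form (second part of the Lemma) places $\tilde f(\mathbb E)_{i,j}$ directly in $I^0_\Xi(\mu,s,t)$. The two computations yield the same matrix entry up to elements of $I^0_\Xi(\mu,s,t)$ because the commutators $[E_{ab},f(\mathbb E)_{cd}]$ are themselves linear combinations of entries of $f(\mathbb E)$.

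Finally, when $p=q$ the excerpt already observes that the same inductive scheme, applied symmetrically to $F^m_{i,i}$, $F^m_{\bar i,i}$, $F^m_{k,k}$, yields $F^{2L}_{a,b}=0$ for every $(a,b)$, so \eqref{eq:Upq} extends to all columns and $I^0_\Xi$ may be enlarged to the two-sided $I_\Xi$. The delicate point of the whole argument is the identification in $R$ of the residue of $f(\mathbb E)_{u,v}$ with $F^{2L}_{u,v}$: one must verify that each step in the reduction of $\tilde F_{u,v}^{m-1}$ contributes only to $\mathfrak n_{\mathbb C} U(\mathfrak g)$, to $\sum_{X\in\mathfrak k}U(\mathfrak g)(X-\chi_{s,t}(X))$, or to $\sum U(\mathfrak g)(E_\nu-2\mu_i)$, and in particular that the $(E_{i,i}-E_{\bar i,\bar i})$-contributions and the off-diagonal block contributions collapse correctly onto the diagonal $E_i$-terms appearing in the recursions for $F^m_{\pm i}$. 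Once this bookkeeping is secured, all four clauses of the theorem follow formally from the computations already assembled.
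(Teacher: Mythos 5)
Your proposal is correct and follows essentially the same route as the paper: the theorem is stated there as a summary of the preceding computations, namely the inductive reduction of $F^m_{u,v}$ modulo $\mathfrak n_{\mathbb C}U(\mathfrak g)+\sum_{X\in\mathfrak k}U(\mathfrak g)(X-\chi_{s,t}(X))$ with the specialization $E_\nu=2\mu_i$, the vanishing $F^{2L}_{a,b}=0$ for $p<b\le p+q$ (and for all $b$ when $p=q$), the $\Ad$-equivariance $\Ad(g)q(\mathbb E)={}^tg\,q(\mathbb E)\,g^{-1}$ for two-sidedness, and the displayed Lemma applied to $\tilde f(x)=(x-s-q)f(x)$ with $\lambda=-(s+q)$ to absorb the extra factor. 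The "delicate point" you flag (that the iterated reduction really computes the residue of $f(\mathbb E)_{u,v}$) is exactly the step the paper also leaves to the displayed computations and to the analogous argument in \cite[Proposition~3.4]{O6}, so your treatment matches the paper's.
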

The polynomial $\tilde f(x)$ or $f(x)$ equals the minimal 
polynomial $q_{\Theta}(\mathfrak{gl}_{p+q};x,t)$ given in the last 
section when $p>q$ or $p=q$, respectively. 
Hence this theorem and the argument in the preceding section 
give the following corollary.
\begin{cor}
\label{cor:upq}
Suppose the infinitesimal character of 
$\mathcal B(G/P_\Xi;\mathcal L^\ell_{\Xi,\mu})$ is regular and
$c(\mu+\rho(\Xi),\ell)\ne 0$.
Then the image of $\mathcal P_{\Xi,\mu}^\ell$ is identified with
the subspace of $\mathcal A(G)$ killed by $I^0_{\Xi}(\mu,s,t)$
(resp.~$I_{\Xi}(\mu,s,t)$),
$\sum_{X\in\mathfrak k}U(\mathfrak g)(X-\chi_{s,t}(X))$
and $\Delta_i-c_i$ for $i=2,\dots,L-1$ 
with $\Delta_i=\trace \mathbb E^{i}$ when $p>q$ (resp.~$p=q$).
Here the complex parameters $\mu$, $s$, $t$ of $I^0_{\Xi}(\mu,s,t)$ 
or $I_{\Xi}(\mu,s,t)$ and $c_i\in\mathbb C$ are determined 
according to the parameter 
$\mu$ and $\ell$ of $\mathcal P_{\Xi,\mu}^\ell$,
\end{cor}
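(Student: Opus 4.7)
The strategy is to reduce the characterization of the image of $\mathcal{P}^\ell_{\Xi,\mu}$ to a characterization of its preimage $\mathcal{B}(G/P_\Xi;\mathcal{L}^\ell_{\Xi,\mu})$ as a subspace of $\mathcal{B}(G/P;\mathcal{L}^\ell_{\mu+\rho(\Xi)})$ via Proposition~\ref{prop:characterizedeg} combined with Theorem~\ref{thm:upq}, and then transport this characterization across the Poisson transform using Theorem~\ref{thm:4.9}. First I would choose the parameters $(s,t)$ with $s-t=\ell$ and $(\mu_1,\dots,\mu_L)$ so that the character of $\mathfrak{p}_\Theta$ determined by $(\mu,s,t)$---through the prescribed eigenvalues of the $E_\nu$ and the values on $\mathfrak{k}_{\mathbb{C}}$---coincides with the infinitesimal character attached to $\tau_{\Xi,\ell,\mu}$ after the conventional $\rho$-shifts. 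With this choice the polynomial $\tilde{f}(x)$ of Theorem~\ref{thm:upq} (respectively $f(x)$ when $p=q$) equals the minimal polynomial $q_\Theta(\mathfrak{gl}_{p+q};x,\lambda)$ of \S 3.2, so that the ideal $\tilde{I}_\Xi(\mu,s,t)$ (respectively $I_\Xi(\mu,s,t)$) augmented by the central relations $\Delta_i-c_i$ for $i=2,\dots,L-1$ with $c_i=\lambda(\Delta_i)$ coincides with the Oshima--Oda two-sided ideal $I_\Theta(\lambda)$.

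Next I would verify the GAP condition~\eqref{eqn:gap3} for this $I_\Theta(\lambda)$. Since the infinitesimal character is regular by hypothesis, this follows from \cite[Theorem~3.12]{OO} and, in the classical $\mathfrak{gl}_n$ setting, from \cite[Remark~4.5~(i)]{O6}. Proposition~\ref{prop:characterizedeg} then identifies $\mathcal{B}(G/P_\Xi;\mathcal{L}^\ell_{\Xi,\mu})$, regarded as a subspace of $\mathcal{B}(G/P;\mathcal{L}^\ell_{\mu+\rho(\Xi)})$ via Lemma~\ref{lem:4.10}, with the joint kernel in that larger space of the operators $\pi(a(D))$ for $D\in I_\Theta(\lambda)$.

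I would then apply the Poisson transform. Under the hypotheses $c(\mu+\rho(\Xi),\ell)\ne 0$ and regularity of the infinitesimal character, Theorem~\ref{thm:4.9}~(ii) gives a topological $G$-isomorphism $\mathcal{P}^\ell_{\mu+\rho(\Xi)}:\mathcal{B}(G/P;\mathcal{L}^\ell_{\mu+\rho(\Xi)})\to\mathcal{A}(G/K;\mathcal{M}^\ell_{\mu+\rho(\Xi)})$. Because $I_\Theta(\lambda)$ is two-sided, the corresponding system of differential equations is $G$-invariant and is preserved by the Poisson transform, so by Lemma~\ref{lem:4.10} the image of $\mathcal{P}^\ell_{\Xi,\mu}$ is precisely the subspace of $\mathcal{A}(G)$ cut out by the defining equations of $\mathcal{M}^\ell_{\mu+\rho(\Xi)}$ (which comprise the line bundle relations $X-\chi_{s,t}(X)$ for $X\in\mathfrak{k}$ and the shifted harmonic-polynomial relations) together with $I_\Theta(\lambda)$. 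For $p>q$ the final step is to apply the inclusion $\tilde{I}_\Xi(\mu,s,t)\subset I^0_\Xi(\mu,s,t)+\sum_{X\in\mathfrak{k}}U(\mathfrak{g})(X-\chi_{s,t}(X))$ from Theorem~\ref{thm:upq}: on the line-bundle-restricted space the operators $X-\chi_{s,t}(X)$ act as zero, so the order-$(2L+1)$ generators of $\tilde{I}_\Xi$ may be replaced by the order-$2L$ generators of $I^0_\Xi$. For $p=q$ the two-sided ideal $I_\Xi(\mu,s,t)$ itself already satisfies~\eqref{eq:Upq} and no such reduction is required.

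The main obstacle I foresee is the careful bookkeeping of parameters needed to ensure that the character $\lambda$ of $\mathfrak{p}_\Theta$ arising in the Oshima--Oda construction matches exactly the one induced by $\tau_{\Xi,\ell,\mu}$ after all $\rho$- and $\rho(\Xi)$-shifts, and to identify precisely which central generators $\Delta_i$ must be supplied externally (namely those whose eigenvalues are not already determined by the minimal polynomial relations on $\mathbb{E}$). A secondary subtlety is that $I^0_\Xi(\mu,s,t)$ is only a left, not a two-sided, ideal: one must verify that on line bundle sections, where $X-\chi_{s,t}(X)$ holds identically, augmenting $I^0_\Xi$ by these $K$-relations defines the same zero set as the two-sided $\tilde{I}_\Xi$; this is exactly what the congruence~\eqref{eq:Upq} of Theorem~\ref{thm:upq} delivers.
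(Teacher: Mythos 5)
Your proposal is correct and follows essentially the same route as the paper: identify $\tilde f$ (resp.\ $f$) with the minimal polynomial $q_\Theta(\mathfrak{gl}_{p+q};x,\lambda)$, invoke the regularity hypothesis to get the condition \eqref{eqn:gap3} via \cite[Remark~4.5~(i)]{O6}/\cite{OO}, transport the characterization of $\mathcal B(G/P_\Xi;\mathcal L^\ell_{\Xi,\mu})$ through the bijective Poisson transform as in Proposition~\ref{prop:characterizedeg} and the theorem following it, and finally use the congruence \eqref{eq:Upq} of Theorem~\ref{thm:upq} to trade the order-$(2L+1)$ generators of $\tilde I_\Xi(\mu,s,t)$ for the order-$2L$ generators of $I^0_\Xi(\mu,s,t)$ modulo the $\mathfrak k$-relations when $p>q$. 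This is precisely the argument the paper compresses into the sentence preceding the corollary, so no further comment is needed.
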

%%%%%
\begin{exmp}[Shilov boundary]\label{eq:UpqShilov}
Consider the case when $L=1$.
We will write
\[\mathbb E = \begin{pmatrix}K_1 & P \\ Q & K_2\end{pmatrix}\in 
M\bigl(p+q,U(\mathfrak g)\bigr)\]
for simplicity.  
Here $K_1=(E_{i,j})_{1\le i,\,j\le p}$ etc.
Then
\begin{align*}
 \mathbb E&\equiv\begin{pmatrix}s & P\\ Q & t\end{pmatrix},\\
 K_1P & \equiv (p+s)P
  \mod \sum_{X\in\mathfrak k}U(\mathfrak g)
  \bigl(X-\chi_{s,t}(X)\bigr),\\
 K_2Q & \equiv (q+t)Q   
  \mod \sum_{X\in\mathfrak k}U(\mathfrak g)
  \bigl(X-\chi_{s,t}(X)\bigr),\\
 \mathbb E^2&\equiv
   \begin{pmatrix}K_1 & P\\ Q & K_2\end{pmatrix}
   \begin{pmatrix}s & P\\ Q & t\end{pmatrix}
   =\begin{pmatrix}PQ+sK_1& (K_1+t)P \\
 (K_2+s)Q & QP+tK_2\end{pmatrix}\\
  &\equiv\begin{pmatrix}
  PQ + s^2 & (p+s+t)P \\ (q+s+t)Q & QP+t^2
 \end{pmatrix},\\
%%%%%%%%%%%%%%%%%%%%%%%
 \bigl(\mathbb E&-\lambda-\tfrac{s+t}2\bigr)
 \bigl(\mathbb E+\lambda-p-\tfrac{s+t}2\bigr)\\
 &\equiv
 \mathbb E^2 - \bigl(p+s+t\bigr)\mathbb E
 -\bigl(\lambda+\tfrac{s+t}2\bigr)
  \bigl(\lambda - p-\tfrac{s+t}2\bigr)\\
 &\equiv
 \begin{pmatrix}
  PQ+s^2-(p+\tfrac{s+t}2)s & 0\\
  (q-p) Q & QP + t^2 - t(p+\tfrac{s+t}2)
 \end{pmatrix}
  -\bigl(\lambda+\tfrac{s+t}2\bigr)\bigl(\lambda - p-\tfrac{s+t}2\bigr)\\
 &\equiv 
 \begin{pmatrix}
  PQ - s(p+t) & 0 \\ (q-p)Q & QP - t(p+s)
 \end{pmatrix}
 -\bigl(\lambda+\tfrac{s+t}2\bigr)\bigl(\lambda-p-\tfrac{s+t}2\bigr)\\
 &=\begin{pmatrix}
 PQ - (s-t)p &  0\\
 (q-p)Q      &  QP
 \end{pmatrix} -
 \bigl(\lambda+\tfrac{s-t}2\bigr)\bigl(\lambda - p -\tfrac{s-t}2\bigr).
\end{align*}
Then the system of the equations of the second order characterizing the image 
of the corresponding
Poisson transform equals
\begin{equation}
  (QP)_{i,j}u =
  \delta_{i,j}
  \bigl(\lambda+\tfrac{s-t}2\bigr)\bigl(\lambda - p -\tfrac{s-t}2\bigr)
  u\quad(1\le i,j\le q).
\end{equation}
Note that the element $\trace QP$ of $U(\mathfrak g)$ defines a 
$G$-invariant differential operator on the homogeneous 
line bundle ${\mathbf E}_{\ell}$ over $G/K$ with $\ell=s-t$, 
which is a constant multiple of the Laplace-Beltrami operator 
on $\mathbf{E}_\ell$.
\end{exmp}
\begin{rem}\label{rem:upq}
The second order operators $(QP)_{i,j}$ in
Example~\ref{eq:UpqShilov} are nothing but the so called
Hua operators for $G=U(p,q)$.
In the case of the trivial line bundle over $G/K$,
that is the case of $s=t=0$,  the fact that
they characterize the image of the Poisson transform
on the Shilov boundary was proved in \cite{JK} for
$p=q$ and $\lambda=p$, \cite{Sn2} for $p=q$ and generic $\lambda$,
\cite{BV} for $p>q$ and $\lambda=p$, and
\cite{KZ} for $p>q$ and generic $\lambda$.
Our result gives a further generalization to line bundles over 
$G/K$. Moreover the differential operators of order $2L$ 
corresponding to  $G/P_\Xi$ in
Corollary~\ref{cor:upq} 
can be considered to be a generalization of the second
order Hua operators corresponding to the Shilov boundary. 
\end{rem}

\subsection{$Sp(n,\mathbb R)$}
We calculate the system of differential equations characterizing the
image of the Poisson transform $\mathcal P_{\Xi,\mu}^\ell$ attached
to the Shilov boundary of the symmetric space $Sp(n,\mathbb R)/U(n)$
as in the case of the symmetric space $U(p,q)/U(p)\times U(q)$.
Putting
\[\mathbb F = \begin{pmatrix}
               K & P \\ Q & -{}^tK
             \end{pmatrix}
\quad\text{ with }
 \begin{cases}
   2K_{ij} = E_{ij} - E_{j+n,i+n},\\
   2P_{ij} = E_{i,j+n} + E_{j,i+n},\\
   2Q_{ij} = E_{i+n,j} + E_{j+n,i},
 \end{cases}
\]
we have $\sum_{1\le i,\,j\le 2n}\mathbb C F_{ij}\simeq\mathfrak {sp}_{n}$.
\begin{align*}
 [E_{ij}-E_{j+n,i+n},E_{k,\ell+n}+E_{\ell,k+n}]&
 =  \delta_{jk}E_{i,\ell+n}+\delta_{j\ell}E_{i,k+n}
  + \delta_{j\ell}E_{k,i+n}+\delta_{jk}E_{\ell, i+n} 
 \\
 [K_{ij}, P_{k\ell}] &= \tfrac12\delta_{jk}P_{i\ell}+\tfrac12\delta_{j\ell}P_{ik},\\
 [K_{ij},Q_{k\ell}] &= -\tfrac12\delta_{ik}Q_{j\ell}-\tfrac12\delta_{i\ell}Q_{jk},\\
 \sum_\nu K_{i\nu}P_{\nu j} &- \sum_\nu P_{\nu j}K_{i\nu}
  = \tfrac n2 P_{ij}+\tfrac12 P_{ij}=\tfrac{n+1}2P_{ij},\\
 \sum_\nu -K_{\nu i}Q_{\nu j} & + \sum_\nu Q_{\nu j}K_{\nu i}
  = \tfrac n2 PQ_{ij}+\tfrac12 Q_{ij}=\tfrac{n+1}2Q_{ij},\\
\begin{pmatrix}K & P\\ Q & -{}^t\!K\end{pmatrix}
\begin{pmatrix}\ell & P\\ Q & -\ell\end{pmatrix}
 &=\begin{pmatrix}\ell K+PQ & KP-\ell P\\
 \ell Q-{}^t\! KQ & QP+\ell\,{}^t\!K\end{pmatrix}\\
 &\equiv \begin{pmatrix}PQ +\ell^2 & \tfrac{n+1}2P\\\tfrac{n+1}2Q& QP+\ell^2
 \end{pmatrix}\\
 \mathbb E\bigl(\mathbb E-\tfrac{n+1}2\bigr)&\equiv
 \begin{pmatrix}PQ + \ell(\ell-\tfrac{n+1}2)& 0\\ 0& QP
  + \ell(\ell+\tfrac{n+1}2)\end{pmatrix}\\
 \bigl(\mathbb E-\lambda\bigr)\bigl(\mathbb E+\lambda-\tfrac{n+1}2\bigr)&\equiv
 \begin{pmatrix}PQ - (n+1)\ell & 0
       \\ 0& QP
 \end{pmatrix}\\
 &\quad - \bigl(\lambda+\ell\bigr)\bigl(\lambda-\ell-\tfrac{n+1}2\bigr).
\end{align*}
Hence the system of the differential equations are
\begin{equation}
\label{eqn:sphua}
 \begin{cases}
  (PQ)_{i,j}u = \delta_{i,j}
     \bigl(\lambda-\ell\bigr)\bigl(\lambda+\ell-\tfrac{n+1}2\bigr)u
       &(1\le i,\,j\le n),\\
  (QP)_{i,j}u = \delta_{i,j}\bigl(\lambda+\ell\bigr)
     \bigl(\lambda-\ell-\tfrac{n+1}2\bigr)
     u&(1\le i,\,j\le n).
 \end{cases}
\end{equation}
\begin{rem}
The second order operators $(PQ)_{i,j}$ and $(QP)_{i,j}$
 are nothing but the so called
Hua operators for $G=Sp(n,\mathbb{R})$.
The fact that the  equations (\ref{eqn:sphua}) characterize the image
of the Poisson transform on the Shilov boundary
was proved by the second author~\cite{Sn3} for
generic $\ell$ and $\lambda$.
In the case of the trivial line bundle over $G/K$,
that is the case of $\ell=0$,  it  was proved in \cite{KM} for
$n=2$ and $\lambda=\frac32$, \cite{J1} for $\lambda=\frac{n+1}{2}$,
and \cite{Se} for generic $\lambda$.
\end{rem}
\subsection{$GL(n,\mathbb R)$}
Lastly we give a lemma which helps the calculation of the system of
differential operators on the symmetric space $GL(n,\mathbb R)_+/SO(n,\mathbb R)$.
Here $GL(n,\mathbb R)_+=\{g\in GL(n,\mathbb R)\,;\,\det g>0\}$.
%%%
\begin{lem}[$GL(n,\mathbb R)$] Put
\begin{align*}
 \mathbb E&=K+P=\Bigl(K_{ij}+P_{ij}\Bigr)\quad\text{ with }
 \begin{cases}
 K_{ij} = \tfrac12(E_{ij}-E_{ji}),\\
 P_{ij} = \tfrac12(E_{ij} + E_{ji}),
 \end{cases}\\
 \mathfrak g_{\mathbb C}&=\sum_{i,\,j=1}^n\mathbb CE_{ij} \simeq \mathfrak{gl}_n,
 \text{ \ and \ }
 \mathfrak k_{\mathbb C}=\sum_{i,\,j=1}^n \mathbb CK_{ij} \simeq \mathfrak{o}_n.
\end{align*}
Then for $m=0,1,2\ldots$
\begin{align}
  KP^m &= \tfrac n2P^m - \tfrac12 \trace(P^m)
         +\sum_{\nu=1}^n (P^m)_{\nu j}K_{i\nu}\label{eq:gln1}\\
       &\equiv \tfrac n2P^m - \tfrac12 \trace(P^m)
         \mod U(\mathfrak g)\mathfrak k,\notag\\
 (\mathbb E-\tfrac n2)P^m&\equiv P^{m+1}-\tfrac12\trace(P^m),\label{eq:gln2}\\
P^m &\equiv(\mathbb E-\tfrac n2)^{m-1}\mathbb E+
      \tfrac12\sum_{k=2}^m(\mathbb E-\tfrac n2)^{m-k}\trace(P^{k-1}),\label{eq:gln3}\\
\trace(P^m)
    &\equiv \trace\bigl((\mathbb E-\tfrac{n-1}2)^{m-1}\mathbb E\bigr).
     \label{eq:gln4}
\end{align}
\end{lem}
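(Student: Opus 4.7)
The strategy is to prove the four identities in order, since each builds on the previous. For \eqref{eq:gln1} I would induct on $m$, starting from the commutation relation $[K_{ij},P_{k\ell}] = \tfrac12\bigl(\delta_{jk}P_{i\ell}+\delta_{j\ell}P_{ik}-\delta_{i\ell}P_{jk}-\delta_{ik}P_{j\ell}\bigr)$, which follows by expanding $K_{ij} = \tfrac12(E_{ij}-E_{ji})$ and $P_{k\ell} = \tfrac12(E_{k\ell}+E_{\ell k})$ and applying $[E_{ab},E_{cd}]=\delta_{bc}E_{ad}-\delta_{ad}E_{cb}$. The base case $m=1$ follows by writing $K_{ik}P_{kj}=P_{kj}K_{ik}+[K_{ik},P_{kj}]$ and summing over $k$: the $\delta_{kk}=1$ term produces $\tfrac n2P_{ij}$, the $\delta_{ij}P_{kk}$ term produces $-\tfrac12\delta_{ij}\trace(P)$, and the remaining two Kronecker contributions cancel against one another thanks to $P_{ab}=P_{ba}$. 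For the inductive step $m\to m+1$, I would write $(KP^{m+1})_{ij}=\sum_k(KP^m)_{ik}P_{kj}$, substitute the inductive formula, and commute the residual $K_{i\nu}$'s past $P_{kj}$; after reindexing and using symmetry of $P$ together with the basic commutation identity, the resulting nested sums collapse (modulo $U(\mathfrak g)\mathfrak k$) to the three terms claimed.

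Identity \eqref{eq:gln2} is immediate from \eqref{eq:gln1} and $\mathbb E=K+P$. For \eqref{eq:gln3} I would induct on $m$: the base case $m=1$ is $P\equiv\mathbb E\mod U(\mathfrak g)\mathfrak k$. For the inductive step, apply \eqref{eq:gln2} in the form $P^{m+1}\equiv(\mathbb E-\tfrac n2)P^m+\tfrac12\trace(P^m)$, then substitute the inductive expression for $P^m$; multiplying through by $(\mathbb E-\tfrac n2)$ yields $(\mathbb E-\tfrac n2)^m\mathbb E+\tfrac12\sum_{k=2}^m(\mathbb E-\tfrac n2)^{m+1-k}\trace(P^{k-1})$, and the trailing $\tfrac12\trace(P^m)$ exactly fills in the missing $k=m+1$ summand.

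For \eqref{eq:gln4} I would take the trace of \eqref{eq:gln3}: since $\trace$ converts each matrix $(\mathbb E-\tfrac n2)^{m-k}$ (multiplied by the scalar $\trace(P^{k-1})$) into $\trace((\mathbb E-\tfrac n2)^{m-k})\trace(P^{k-1})$, one obtains the recursion
\[
\trace(P^m)\equiv\trace\bigl((\mathbb E-\tfrac n2)^{m-1}\mathbb E\bigr)+\tfrac12\sum_{k=2}^m\trace\bigl((\mathbb E-\tfrac n2)^{m-k}\bigr)\trace(P^{k-1}).
\]
By induction on $m$ this expresses $\trace(P^m)$ as a polynomial purely in the Gelfand power sums $T_j:=\trace(\mathbb E^j)$; it remains to identify this polynomial with $\trace((\mathbb E-\tfrac{n-1}2)^{m-1}\mathbb E)=\sum_{j=1}^m\binom{m-1}{j-1}(-\tfrac{n-1}2)^{m-j}T_j$. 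The shift of the argument from $\tfrac n2$ to $\tfrac{n-1}2$ is the Harish-Chandra $\rho$-shift for $\mathfrak{gl}_n$, and the identity is checked by matching coefficients of each $T_j$ using the binomial convolution for shifted powers.

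The main obstacle is the bookkeeping in the inductive step for \eqref{eq:gln1}, where nested double sums of commutators $[K_{i\nu},P_{kj}]$ acting on $P^m$ must be collapsed using $P_{ab}=P_{ba}$ and repeated application of the basic commutation relation, with the subtle point that the off-diagonal asymmetry of $P^m$ (absent for $P$) has to be absorbed into $U(\mathfrak g)\mathfrak k$. The combinatorial $\rho$-shift identity underlying step~4 is the secondary but delicate technical point.
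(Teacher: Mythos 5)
For \eqref{eq:gln1}--\eqref{eq:gln3} your route is in substance the paper's. The paper proves \eqref{eq:gln1} not by induction on $m$ but by a telescoping sum over the position at which $K_{i\mu}$ is inserted into the product $P^m$, the difference of two consecutive insertions being $\tfrac12(\trace P^p)(P^q)_{ij}-\tfrac12(\trace P^{p+1})(P^{q-1})_{ij}$; your inductive step is exactly the last of these telescoping steps, so the two arguments are the same computation packaged differently. The ``subtle point'' you flag --- that the two cross terms produced by $[K_{i\mu},P_{\nu k}]$ cancel only up to commutators measuring the failure of $(P^m)_{\mu\nu}=(P^m)_{\nu\mu}$ --- is genuine: the paper asserts this cancellation silently, and since the resulting corrections sit to the \emph{left} of the remaining factors of $P$ they are not visibly in the left ideal $U(\mathfrak g)\mathfrak k$, so either version needs an explicit argument that they are harmless. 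Your derivations of \eqref{eq:gln2} from \eqref{eq:gln1} and of \eqref{eq:gln3} by induction coincide with the paper's.

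The genuine gap is in your step for \eqref{eq:gln4}. The recursion you obtain by tracing \eqref{eq:gln3} is correct, but its terms $\trace\bigl((\mathbb E-\tfrac n2)^{m-k}\bigr)\trace(P^{k-1})$ are \emph{products} of traces, so unwinding it expresses $\trace(P^m)$ as a polynomial of degree greater than one in the power traces $T_j=\trace(\mathbb E^j)$ (already at $m=3$ a term $\tfrac12T_1^2$ survives), whereas the target $\trace\bigl((\mathbb E-\tfrac{n-1}2)^{m-1}\mathbb E\bigr)$ is linear in the $T_j$; ``matching coefficients of each $T_j$'' therefore cannot close the argument. Concretely, at $m=2$ the trace of \eqref{eq:gln3} yields $\trace(P^2)\equiv\trace(\mathbb E^2)$ modulo $U(\mathfrak g)\mathfrak k$ (confirmed directly, since $E_{ik}E_{ki}=P_{ik}^2-K_{ik}^2+\tfrac12(P_{ii}-P_{kk})$ and the last terms sum to zero), which differs from $\trace\bigl((\mathbb E-\tfrac{n-1}2)\mathbb E\bigr)$ by $\tfrac{n-1}2\trace(\mathbb E)$, and $\trace(\mathbb E)=\sum_iE_{ii}$ does not lie in $U(\mathfrak g)\mathfrak k$. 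So \eqref{eq:gln4} cannot be the bare trace of \eqref{eq:gln3} taken modulo $U(\mathfrak g)\mathfrak k$; the $\rho$-shift from $\tfrac n2$ to $\tfrac{n-1}2$ that you correctly sense must enter through a further reduction (e.g.\ modulo $\mathfrak nU(\mathfrak n+\mathfrak a_{\mathfrak p})$ as in the map $\gamma$), not through a binomial identity among the $T_j$. The paper's own justification of \eqref{eq:gln4} is the single clause that it ``corresponds to the trace of the matrices in \eqref{eq:gln3}'', so you have reproduced its strategy faithfully, but as written your coefficient-matching step would fail and this part of the lemma remains unproved in your proposal.
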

\begin{proof}
Since
\begin{align*}
[E_{ij},E_{kl}] &=
 \delta_{jk}E_{il} - \delta_{l i}E_{k j},\\
[E_{ij},E_{kl}+E_{kl}] &=
 \delta_{jk}E_{il} - \delta_{l i}E_{k j}+
 \delta_{jl}E_{ik} - \delta_{k i}E_{l j},\\
[E_{ij}-E_{ji},E_{kl}+E_{kl}] &=
 \delta_{jk}E_{il} - \delta_{l i}E_{k j}+
 \delta_{jl}E_{ik} - \delta_{k i}E_{l j}\\
 &\quad{}
 -\delta_{ik}E_{jl} + \delta_{l j}E_{k i}
 - \delta_{il}E_{jk} + \delta_{k j}E_{l i}\\
 &=2(\delta_{jk}P_{il}+\delta_{jl}P_{ik}-
     \delta_{il }P_{jk}-\delta_{ik}P_{jl})\\
[K_{ij},P_{kl}] &=
 \tfrac12(\delta_{jk}P_{il}+\delta_{jl}P_{ik}
 -\delta_{ik}P_{jl}-\delta_{il}P_{jk}),\\
\sum_{\mu,\nu}(P^p)_{\mu\nu}K_{i\mu}(P^q)_{\nu j}
 &-\sum_{\mu,k}(P^{p+1})_{\mu k}K_{i\mu}(P^{q-1})_{kj}\\
 &=\sum_{\mu,\nu,k}(P^p)_{\mu\nu}[K_{i\mu},P_{\nu k}](P^{q-1})_{k j}\\
 &= 
  \tfrac12\sum_{\mu,\nu,k}(P^p)_{\mu\nu}(\delta_{\mu\nu}P_{ik}+\delta_{\mu k}
  P_{i\nu}-\delta_{i\nu}P_{\mu k}-\delta_{ik}P_{\mu\nu})(P^{q-1})_{kj}\\
 &= 
  \tfrac12(\trace P^p)(P^q)_{ij}
  -\tfrac12(\trace P^{p+1})(P^{q-1})_{ij},
\end{align*}
we have \eqref{eq:gln1} by the sum of these equations
for $p=m-q=0,\ldots,m$.
Moreover \eqref{eq:gln2} follows from \eqref{eq:gln1}.
Then \eqref{eq:gln2} proves \eqref{eq:gln3} by the induction on $m$
and \eqref{eq:gln4} corresponds to the trace of the matrices in
\eqref{eq:gln3}.
\end{proof}
\begin{rem}
Our study of  characterizing the images of Poisson transform for general
boundaries of a symmetric space by two sided ideals 
is originated in \cite{O4} for the boundaries of $GL(n,\mathbb R)_+/SO(n)$, 
where generators of the ideals that are different from minimal 
polynomials are constructed. 
The ideal spanned by the components of $\gamma\bigl(f(\mathbb E)\bigr)$ 
for any polynomial $f(x)$ is calculated by \cite[Theorem~4.19]{O6}
for the symmetric space $GL(n,\mathbb C)/U(n)$.
%A characterization of the images of Poisson transform for general
%boundaries of a symmetric space by two sided ideals 
%is originated in \cite{O4} for the boundaries of $GL(n,\mathbb R)_+/SO(n)$, 
%where different generates of the ideals are constructed.
%The ideal spanned by the components of $\gamma\bigl(f(\mathbb E)\bigr)$ 
%for any polynomial $f(x)$ is calculated by \cite[Theorem~4.19]{O6}
%for the symmetroc space $GL(n,\mathbb C)/U(n)$.
\end{rem}
%%%%%%%%%%%%%%%%%%%%%
%----------------------------------
%		References
%----------------------------------

\end{document}